\definecolor{Dark Orchid}{RGB}{153,50,204}
\definecolor{Goblin Green}{RGB}{99,181,33}
\definecolor{Crayola Bittersweet}{RGB}{254,111,94}
\theoremstyle{plain}
\newtheorem{thm}{Theorem}[section]
\newtheorem{lem}[thm]{Lemma}
\newtheorem{cor}[thm]{Corollary}
\theoremstyle{definition}
\newtheorem{defn}[thm]{Definition}
\newtheorem{query}[thm]{Question}
\theoremstyle{remark}
\newtheorem*{claim}{Claim}
\numberwithin{equation}{section}
\theoremstyle{remark}
\newcommand{\thistheoremname}{}
\newtheorem*{genremark*}{\thistheoremname}
\newenvironment{case}[1]
	{\renewcommand{\thistheoremname}{#1}%
	\begin{genremark*}}
 	{\end{genremark*}}
\DeclareMathOperator{\uh}{\upharpoonright}
\DeclareMathOperator{\lowww}{low_3}
\DeclareMathOperator{\highh}{high_2}
 \def \angles#1#2{\langle #1,#2 \rangle}
\newcommand{\N}{\mathbb{N}}
\begin{document}

\title{Contrasting the Halves of an Ahmad Pair}

\author{Karthik Ravishankar}
\address{Department of Mathematics, University of Wisconsin--Madison, 480 Lincoln Dr., Madison, WI 53706, USA}
\email{kravishanka3@wisc.edu}
\urladdr{https://karthik-ravishankar-math.github.io}

\date{\today}

\makeatletter
\@namedef{subjclassname@2020}{\textup{2020} Mathematics Subject Classification}
\makeatother
\subjclass[2020]{Primary 03D30; Secondary 03D35}


\begin{abstract}
We study Ahmad pairs in the $\Sigma^0_2$ enumeration degrees. $(A,B)$ is an Ahmad pair if $A \not \leq_e B$ and every $Z <_e A$ satisfies $Z \leq_e B$. We characterize the degrees that are the left halves of an Ahmad pair as those that are $\lowww$ and join irreducible. We then show that the right half has to be $\highh$ giving a natural separation between the two halves which is a significant strengthening of previous work. We define a hierarchy of join irreducibility notions using which we characterize the left halves of Ahmad $n$-pairs as those that are $\lowww$ and $n$-join irreducible, while the right halves are $\highh$. This allows us to extend and clarify previous work to show that for any $n$, there is a set $A$ which is the left half of an Ahmad $n$-pair but not of an Ahmad $(n+1)$-pair. These results have new implications about the $\forall \exists$-theory of the $\Sigma^0_2$ e-degrees as a partial order and also provide a new $\Pi_3$ definition of $\lowww$ as well as $\highh$.
\end{abstract}

\maketitle

\section{Introduction}
In this paper we work in the local structure of the enumeration degrees, those degrees below $0'_e$. These are precisely the degrees of $\Sigma^0_2$ sets. The degree structure here differs from that of the c.e.\ Turing degrees. One of the differences is the existence of a join irreducible $\Sigma^0_2$ e-degree while the c.e.\ degrees are all join reducible by the Sacks splitting theorem. Downwards density of the enumeration degrees gives an elementary difference from the $\Delta^0_2$ Turing degrees which has minimal elements.

Since the c.e.\ Turing degrees embed into the $\Sigma^0_2$ e-degrees, and any partial order embeds into the c.e.\ Turing degrees, the same holds for the $\Sigma^0_2$ e-degrees. Hence the $\exists$-fragment of the theory of the $\Sigma^0_2$ e-degrees is decidable. Kent \cite{undecid} showed that the $\exists \forall \exists$-fragment is not. Recently there have been renewed attempts at solving the question of decidability of the $\forall \exists$ theory of the $\Sigma^0_2$ e-degrees. The extension of embeddings problem asks, given partial orders $\mathcal P \subseteq \mathcal Q$, if every embedding of $\mathcal P$ can be extended to an embedding of $\mathcal Q$. This is a subproblem of the $\forall \exists$ theory and has been solved by Lempp, Slaman and Sorbi \cite{extemb}. One of the main obstruction towards extending an embedding is the phenomenon of Ahmad pairs. 
\begin{defn}
Two $\Sigma^0_2$ sets $(A,B)$ form \emph{an Ahmad pair} if $A \not \leq_e B$ and  $\forall Z <_e A\; (Z \leq_e B)$. 
\end{defn}

Note that if $\mathcal P$ is a two element antichain embedded as an Ahmad pair, and $\mathcal Q$ extends $\mathcal P$ by containing an element below the left half but not below the right half, then this embedding of $\mathcal P$ cannot be extended to an embedding of $\mathcal Q$.

Ahmad \cite{ahmad} showed that such pairs exist and that if $(A,B)$ is an Ahmad pair then $(B,A)$ cannot be an Ahmad pair, i.e.\ there is no `symmetric' Ahmad pair. Kallimullian, Iskander, Lempp, Ng and Yamaleev \cite{cupping} showed that if $(A,B)$ is an Ahmad pair, then $A\oplus B <_e 0'_e$.

Extending Ahmad's result in a strong way, Goh, Lempp, Ng and Soskova \cite{extensions} showed that there is no Ahmad triple: sets $A,B,C \leq_e 0'_e$ such that $(A,B)$ and $(B,C)$ are both Ahmad pairs. They show this using a direct construction which ends up being a complicated $0'''$ priority argument. 

In this paper we prove three main results. The first one completely characterizes the left half of an Ahmad pair.
\begin{thm} The following are equivalent for a $\Sigma^0_2$ set $A$:
\begin{enumerate}
    \item $A$ is $\lowww$ and join irreducible,
    \item There is a $\Sigma^0_2$ set $B$ such that $(A,B)$ is an Ahmad pair.
\end{enumerate}
\end{thm}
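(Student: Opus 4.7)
The proof splits into the forward and backward directions.

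Direction $(2) \Rightarrow (1)$. Join irreducibility of $A$ is immediate: if $A \equiv_e X \oplus Y$ with $X, Y <_e A$, then both $X, Y \leq_e B$, forcing $A \leq_e B$ and contradicting the Ahmad pair property. For $A$ being $\lowww$, the plan is to exploit that every degree strictly below $A$ sits below the fixed $\Sigma^0_2$ set $B$; consequently, the ideal strictly below $A$ admits a uniform $\Sigma^0_2$ listing via $\{\Phi_i(B)\}_i$. The crucial third-jump questions about $A$---notably whether $\Phi_i(A) <_e A$ and, if so, which $\Phi_j(B)$ realises it---can then be re-expressed at the $\Pi_3(\emptyset)$ level, yielding $A''' \leq_e \emptyset'''$.

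Direction $(1) \Rightarrow (2)$. Given $A$ that is $\lowww$ and join irreducible, I would construct $B$ by a tree-of-strategies priority argument with requirements
\[
    P_e : A \neq \Phi_e(B), \qquad N_e : \Phi_e(A) <_e A \Rightarrow \Phi_e(A) \leq_e B.
\]
Each $N_e$ is satisfied by building an auxiliary operator $\Psi_e$ and coding a chosen representative of $\Phi_e(A)$ into $B$ so that $\Psi_e(B) = \Phi_e(A)$. Deciding whether $\Phi_e(A)$ is strictly e-below $A$ is of $\Pi_3(A)$ complexity, so the natural tree of guesses has height three; the $\lowww$ hypothesis is exactly what says $\emptyset'''$ sees the true path, allowing the whole construction to be executed at that oracle.

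The main obstacle is ensuring that every $P_e$ succeeds on the true path in the presence of the coding carried out by higher-priority $N_j$'s. On that path $B$ essentially carries a join of representatives of strictly-below-$A$ degrees together with diagonalisation markers for the $P_e$'s. If some $\Phi_e(B)$ were to equal $A$, then $A$ would be realised as an e-image of this join, and join irreducibility of $A$ would force some summand to be e-above $A$, contradicting the guess that its associated $\Phi_j(A)$ was strictly below $A$. Turning this heuristic into a workable strategy requires a delicate choice of the representatives coded into $B$, together with careful injury bookkeeping on the tree, and I expect this to be the technical heart of the argument.
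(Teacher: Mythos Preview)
Your $(2)\Rightarrow(1)$ direction matches the paper's reasoning. The paper packages the ``uniform $\Sigma^0_2$ listing'' as an \emph{Ahmad sequence} for $A$ and shows (Theorem~\ref{thm:main}) that possessing one is equivalent to being $\lowww$; your sketch of re-expressing the third-jump question via $B$ is exactly how the $\Sigma^0_4$ half of that equivalence goes. One small correction: $\{\Phi_i(B)\}_i$ lists all sets below $B$, not just those strictly below $A$, but this does not matter for the argument since $\Gamma_e(A)<_e A$ iff $\Gamma_e(A)\leq_e B$.

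For $(1)\Rightarrow(2)$ your approach diverges from the paper's, and the phrase ``allowing the whole construction to be executed at that oracle'' signals a genuine gap. The set $B$ must be $\Sigma^0_2$, so the construction must produce $B=\Theta(0'_e)$ for a \emph{computable} enumeration operator $\Theta$; you cannot run the construction at oracle $\emptyset'''$. Within such a construction, guessing the $\Pi^0_4$ fact ``$\Phi_e(A)<_e A$'' on a tree is precisely the difficulty: a node that wrongly guesses ``yes'' and codes $\Phi_e(A)\equiv_e A$ into a column of $B$ has irreparably ensured $A\leq_e B$. The $\lowww$ hypothesis does not let the construction \emph{see} the true path while it runs; it only bounds the path's complexity after the fact.

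The paper sidesteps this by using $\lowww$ \emph{before} the construction rather than during it. Theorem~\ref{thm:main} converts $\lowww$ into an Ahmad sequence---a computable $f$ with $\{\Gamma_{f(n)}(0'_e)\}_n=\{Z:Z<_e A\}$---so the decision ``which sets to code'' is already made. Join irreducibility makes this family an ideal not containing $A$. Lemma~\ref{lem:boundIncomp} then builds $B$ by unconditionally copying $\Gamma_{f(e)}(0'_e)$ into column $e$ and meeting $A\neq\Gamma_e(B)$ by a simple dumping argument: no tree, no guessing. The inductive verification that each diagonalization requirement dumps only finitely many elements is exactly where $A\notin\mathcal F$ (hence join irreducibility) is invoked, matching the heuristic in your last paragraph---but with the $\Pi_3$ guessing already discharged.
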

Using this, we get a $\Pi_3$ definition of $\lowww$ in the language of $\{\leq_e\}$ in the local structure in Theorem~\ref{thm:lowdefn}. Ganchev and Soskova \cite{defJumpClass} show that all the jump classes are definable in the local structure. However the definitions go through definability of the corresponding jump classes in the Turing degrees and as such the definitions have high quantifier complexity. 

The second result shows that the right half is fundamentally different from the left half.
\begin{thm}
   Suppose $(A,B)$ is an Ahmad pair where $A, B$ are $\Sigma^0_2$. Then $B$ is $\highh$.
\end{thm}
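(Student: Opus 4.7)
My plan is to show $B'' \geq_T 0'''$ by encoding a $\Pi^0_3$-complete predicate into the e-reducibility questions ``$V_e \leq_e B$'' for a uniformly constructed family of $\Sigma^0_2$ sets sitting below $A$. The Ahmad pair assumption together with Theorem 1.1 gives that $A$ is $\lowww$ and join irreducible, which is exactly the structural input required to perform the coding below $A$.

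Step one is the coding. For each $e$ I would construct a $\Sigma^0_2$ set $V_e \leq_e A$ such that $V_e \equiv_e A$ holds iff a prescribed $\Pi^0_3$-complete predicate $\varphi(e)$ fails. The construction is uniform in $e$ and carried out at the $0'''$-level of complexity, exploiting that $A$ being $\lowww$ gives $A''' \equiv_T 0'''$. Intuitively, this is a uniform refinement of the Theorem 1.1 construction that produces the right half of an Ahmad pair for a given $A$: we build each $V_e$ as a $\Sigma^0_2$ set below $A$, and use a $0'''$-approximation to $\varphi(e)$ to decide whether to preserve or to spoil the reverse reduction $A \leq_e V_e$, while always maintaining $V_e \leq_e A$.

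Step two applies the Ahmad pair condition. If $V_e <_e A$, then $V_e \leq_e B$ by hypothesis. Conversely, if $V_e \equiv_e A$, then $V_e \leq_e B$ would yield $A \leq_e B$, contradicting $A \not\leq_e B$; so $V_e \not\leq_e B$. Combining, $V_e \leq_e B \iff V_e <_e A \iff \varphi(e)$. Step three is then jump accounting: uniformly in $e$, the predicate ``$V_e \leq_e B$'' is $\Sigma^0_3(B)$, i.e.\ $\Sigma^0_1(B'')$. Since $\varphi$ is $\Pi^0_3$-complete, this forces every $\Pi^0_3$ predicate to lie in $\Sigma^0_1(B'')$, hence $0''' \leq_T B''$, so $B$ is $\highh$.

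The main obstacle is step one: constructing the uniform family $\{V_e\}$ with the desired coding. This is a delicate $0'''$-priority-style argument in which uniformity in $e$ must be balanced against the finer join-irreducibility-type preservation requirements, and each individual $V_e$ must remain $\Sigma^0_2$ despite the construction consulting a $0'''$ oracle. The $\lowww$ hypothesis on $A$ is precisely what aligns the construction's intrinsic complexity with the target $\Pi^0_3$ predicate, via the equivalence $A''' \equiv_T 0'''$; without it, the coded approximations could not be realized by $\Sigma^0_2$ sets.
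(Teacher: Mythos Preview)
Your overall architecture matches the paper's: build $V_e\leq_e A$ so that $V_e<_e A$ encodes a $\Pi^0_3$-hard predicate, translate via the Ahmad-pair property to $V_e\leq_e B$, and then bound the complexity of the latter relative to $B$. But Step~3 has a genuine gap. To place ``$V_e\leq_e B$'' below $B^{\langle 2\rangle}$ you unfold it as $\exists i\,\forall m\,(V_e(m)=\Gamma_i(B)(m))$ and need the matrix to lie $\leq_e B'$. Three of the four atomic ingredients are fine: $\Gamma_i(B)$ and $\overline{\Gamma_i(B)}$ are $\leq_e B'$, and $V_e\leq_e A\leq_e 0'_e\leq_e B'$. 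The problem is $\overline{V_e}$: from $V_e\leq_e A$ you only get $\overline{V_e}\leq_e A^\diamond$, and nothing in your argument yields $A^\diamond\leq_e B^\diamond$. Without this, the natural bound on ``$V_e\leq_e B$'' involves $A'\oplus B'$ rather than $B'$ alone, and you cannot conclude $B^{\langle 2\rangle}\geq_e 0'''_e$. The paper supplies exactly this missing step as Lemma~\ref{lem:skipofpair}: take a $K_A$-Guttridge set $G<_e A$, observe $\overline{K_A}\leq_e\overline G$ so that $A^\diamond\leq_e G^\diamond$, and then use the Ahmad-pair property ($G\leq_e B$) to get $A^\diamond\leq_e G^\diamond\leq_e B^\diamond$. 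This is the one genuinely new idea in the proof, and your proposal omits it.

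A secondary remark: your Step~1 is harder than necessary. You invoke $\lowww$-ness of $A$ and describe a $0'''$-level construction, then worry about keeping each $V_e$ in $\Sigma^0_2$. The paper's Lemmas~\ref{lem:pi3complete} and~\ref{lem:incompleteset} avoid all of this: for any non-c.e.\ $A$ with a good approximation they build enumeration operators $\Theta_e$ with $\Theta_e(A)<_e A\iff e\in A^{\langle 3\rangle}$, directly from the approximation to $A$ (so $\Theta_e(A)\leq_e A$ and $\Sigma^0_2$ are automatic). Since $A^{\langle 3\rangle}\equiv_e A'''\geq_e 0'''_e$ always, this already encodes a $\Pi^0_3$-complete predicate, and no appeal to $\lowww$-ness is needed.
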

As a corollary we recover the non-existence of Ahmad triples in a natural way.
\begin{cor}
    There is no Ahmad triple in the local structure, i.e.\ if $A,B$ are $\Sigma^0_2$ sets such that $(A,B)$ is Ahmad pair, then for any $\Sigma^0_2$ set $C$, the pair $(B,C)$ is not an Ahmad pair.
\end{cor}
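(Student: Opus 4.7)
The approach is to observe that, by the two preceding theorems, the left and right halves of an Ahmad pair fall into disjoint jump classes, so no set can play both roles at once. Concretely, suppose $(A,B)$ is an Ahmad pair of $\Sigma^0_2$ sets, and, toward a contradiction, suppose there is a $\Sigma^0_2$ set $C$ such that $(B,C)$ is also an Ahmad pair. Applying the second theorem to the pair $(A,B)$, where $B$ appears on the right, gives that $B$ is $\highh$. Applying the first theorem to the pair $(B,C)$, where $B$ now appears on the left, gives that $B$ is $\lowww$ (and join irreducible, though only the jump information is needed).

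The remaining step is to check that no set is simultaneously $\lowww$ and $\highh$. Unwinding the definitions, $B$ being $\highh$ means $B'' \equiv_T \emptyset'''$, so $B''' \equiv_T \emptyset^{(4)}$, while $B$ being $\lowww$ means $B''' \equiv_T \emptyset'''$. Together these force $\emptyset^{(4)} \equiv_T \emptyset'''$, contradicting the strictness of the jump hierarchy. Hence no such $C$ exists, and $(B,C)$ fails to be an Ahmad pair for every $\Sigma^0_2$ set $C$.

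There is no real obstacle here: the corollary is a clean juxtaposition of the two main theorems, requiring no priority argument and no additional combinatorics beyond the standard fact that the $\lowww$ and $\highh$ jump classes are disjoint. The only thing worth stating carefully in the final write-up is the trivial jump-hierarchy computation, and perhaps a sentence pointing out that this gives a conceptually different proof of the Goh--Lempp--Ng--Soskova non-existence result, replacing their direct $0'''$ priority argument with a structural argument via jump classes.
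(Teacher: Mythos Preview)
Your argument is correct and matches the paper's intended reasoning: the corollary is stated immediately after the two main theorems precisely because it follows by juxtaposing them, with $B$ being simultaneously $\highh$ (as a right half) and $\lowww$ (as a left half) yielding the contradiction. One small point to fix in your write-up: the paper works with the \emph{enumeration} jump throughout (so write $\leq_e$ and $0'''_e$ rather than $\equiv_T$ and $\emptyset'''$), though the disjointness of $\lowww$ and $\highh$ holds for the same reason in either setting.
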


Moreover, we use our analysis of right halves to get a $\Pi_3$ definition of $\highh$ in Theorem~\ref{thm:defhigh2}.

The authors in \cite{extensions} construct what they call `a weak Ahmad triple': sets $(A,B_0,B_1)$ such that $A\not \leq_e B_0, B_1$ and $\forall Z<_e A\; (Z \leq_e B_0 \text{ or } Z \leq_e B_1)$. We will call this an Ahmad $2$-pair. They construct $A,B_0,B_1$ such that $A$ cannot be the left half of an Ahmad pair while $(A,B_0,B_1)$ is an Ahmad $2$-pair. We generalize this as follows:
\begin{defn}
    $(A, B_0,\dots,B_{n-1})$ is an \emph{Ahmad $n$-pair} if $A \not \leq_e B_i$ for every $i$ and $\forall Z <_e A \exists i< n\; (Z \leq_e B_i)$.
\end{defn}
In Definition~\ref{def:njoinirr}, we define a notion of $n$-join irreducibility. Using this we characterize the left half of an Ahmad $n$-pair.
\begin{thm}
The following are equivalent for a $\Sigma^0_2$ set $A$:
\begin{enumerate}
    \item $A$ is $\lowww$ and $n$-join irreducible,
    \item There are $\Sigma^0_2$ sets $B_0,B_1,\dots, B_{n-1}$ such that $(A,B_0,\dots, B_{n-1})$ form an Ahmad $n$-pair.
\end{enumerate}
\end{thm}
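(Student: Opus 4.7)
The plan is to mirror the proof of the $n=1$ case (the first theorem stated in the introduction) by running $n$ parallel constructions for the right-halves $B_0,\dots,B_{n-1}$, with standard join irreducibility replaced by its $n$-fold analogue at the combinatorial steps. Throughout I assume Definition~\ref{def:njoinirr} is set up exactly to make the pigeonhole arguments below go through.

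For the direction $(2)\Rightarrow(1)$, suppose $(A,B_0,\dots,B_{n-1})$ is an Ahmad $n$-pair. To see $A\in\lowww$, I would adapt the argument from the $n=1$ case: every $Z<_e A$ is enumeration-reducible to one of the finitely many $B_i$'s, so we have uniform $\Sigma^0_2$ control on the ideal of e-degrees strictly below $A$, which (as in the first theorem) suffices to extract a $\lowww$ approximation. For $n$-join irreducibility, I would argue contrapositively: a failure of $n$-join irreducibility supplies a witnessing configuration of sets $X_0,\dots,X_k$ strictly below $A$, minimal in the sense of Definition~\ref{def:njoinirr}. Apply the Ahmad $n$-pair property to route each $X_j$ to a column $B_{c(j)}$; by pigeonhole with $k$ chosen large compared to $n$, two witnesses must share a column, so their join already lies inside a single $B_i$. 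Collapsing that pair in the witnessing configuration contradicts its minimality.

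For the direction $(1)\Rightarrow(2)$, given $A$ that is $\lowww$ and $n$-join irreducible, the goal is to construct $\Sigma^0_2$ sets $B_0,\dots,B_{n-1}$ meeting two families of requirements: coverage requirements, arranging that every $Z<_e A$ satisfies $Z\leq_e B_{i(Z)}$ for some column $i(Z)<n$; and non-reducibility requirements, ensuring $A\neq \Psi(B_i)$ for every enumeration operator $\Psi$ and every $i<n$. I would run the $n=1$ case construction in each column in parallel, using the $\lowww$ approximation to $A$ to guess the truth about pending reductions from each $B_i$ back to $A$. When a new $Z<_e A$ must be assigned to some column, $n$-join irreducibility guarantees the existence of at least one column $i$ for which encoding $Z$ into $B_i$ is compatible with the already-active non-reducibility strategies; this replaces the direct appeal to standard join irreducibility made in the $n=1$ construction.

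The main obstacle is the coordination among the $n$ columns in direction $(1)\Rightarrow(2)$. For $n=1$ the assignment is forced, and join irreducibility rules out any offending reduction automatically. For $n>1$ several sets strictly below $A$ may together enumerate $A$, distributed over several columns, so the construction must certify at each stage that some assignment of the newly considered $Z$ is globally safe. Matching the exact combinatorial content of Definition~\ref{def:njoinirr} to the priority tree, and verifying that the $\lowww$ guessing remains accurate under the multi-column assignment, will be the technical crux of the proof.
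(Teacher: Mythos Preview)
Your direction $(2)\Rightarrow(1)$ is essentially the paper's argument: the Ahmad $n$-pair yields an Ahmad sequence (Theorem~\ref{thm:NPairSeq}), hence $\lowww$ (Theorem~\ref{thm:main}); and the ideals $\{Z:Z\leq_e A,B_i\}$ cover $\{Z:Z<_e A\}$, so by pigeonhole any $n+1$ sets strictly below $A$ have two in a common ideal, which is exactly $n$-join irreducibility via Lemma~\ref{lem: idealcover}.

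For $(1)\Rightarrow(2)$ the paper takes a different route that dissolves the obstacle you flag. Rather than a single multi-column priority construction with dynamic column assignment, the paper first proves a purely order-theoretic structure result: $n$-join irreducibility is \emph{equivalent} to $\{Z:Z<_e A\}$ being a union of $n$ ideals (Lemma~\ref{lem: idealcover}), and these ideals have the explicit form $\mathcal F_i=\{Z:Z\oplus Z_i<_e A\}$ for a fixed tuple $Z_0,\dots,Z_{m-1}$ witnessing $(m-1)$-join reducibility (Theorem~\ref{thm:charirr}). Given this, each $\mathcal F_i$ acquires its own uniform enumeration by intersecting the Ahmad sequence for $A$ (from $\lowww$) with the uniform family $\{\Gamma_k(A)\oplus Z_i\}_k$ via Lemma~\ref{lem:uniformEnum}. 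The $n=1$ construction (Lemma~\ref{lem:boundIncomp}) is then applied \emph{independently} to each $\mathcal F_i$ to produce $B_i$. No coordination among columns is needed because the partition into ideals is fixed before any construction begins.

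Your dynamic approach has a genuine gap beyond the coordination issue you already name. The verification of the $\mathcal N$-requirements in Lemma~\ref{lem:boundIncomp} relies on the set of things coded into $B_i$ being an \emph{ideal} not containing $A$: one argues that if $\Gamma_e(B_i)=A$ then $A\leq_e\bigoplus_{j\leq e}\Gamma_{f(j)}(0'_e)\in\mathcal F_i$, a contradiction. If you assign sets $Z$ to columns on the fly, the set assigned to column $i$ need not be closed under join, and you give no mechanism to enforce this. Invoking $n$-join irreducibility at a single stage tells you that \emph{some} pair among finitely many currently visible sets joins below $A$, but it does not tell you how to route the next $Z$ so that the eventual column contents form an ideal. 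The paper's decomposition via Lemma~\ref{lem: idealcover} is precisely what supplies these ideals up front.
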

We define the notion of a proper Ahmad $n$-pair in Definition~\ref{def:properpair}. Using this we get the following stronger result.
\begin{thm}
If $(A, B_0,\dots,B_{n-1})$ is a proper Ahmad $n$-pair, then $B_i$ is $\highh$ for every $i<n$.
\end{thm}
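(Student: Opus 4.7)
The plan is to adapt the argument used for Theorem~1.3 (that the right half of an ordinary Ahmad pair is $\highh$) to the $n$-pair setting, using the properness hypothesis to isolate the contribution of each $B_i$. Fix $i < n$ and let $C_i = \bigoplus_{j \neq i} B_j$. The role of ``properness'' in Definition~\ref{def:properpair} should be to guarantee that $B_i$ is not redundant, i.e.\ that there exist $\Sigma^0_2$ sets $Z <_e A$ with $Z \leq_e B_i$ but $Z \not\leq_e C_i$. The strategy is then to show that the full class of such witnesses is rich enough to force $B_i$ into $\highh$.

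My first attempt would be to reduce directly to Theorem~1.3 by producing a genuine Ahmad pair from the proper $n$-pair. Consider the restriction to $\Sigma^0_2$ degrees below $A$ that are not reducible to $C_i$. By properness this collection is nonempty, and by the $n$-pair condition every such $Z <_e A$ is reducible to $B_i$. If one can show that this sub-ideal has no maximum strictly below $A$ and that it witnesses an Ahmad-pair phenomenon relative to $C_i$, then relativization of Theorem~1.3 would give that $B_i \oplus C_i$ is $\highh$ (relativized). Since $\highh$ is upward closed in the e-degrees, one can then peel off $C_i$ --- either directly, or by an induction on $n$ using the $(n-1)$-pair $(A, B_0, \dots, B_{i-1}, B_{i+1}, \dots, B_{n-1})$, which by properness is no longer an Ahmad pair --- to conclude that $B_i$ itself is $\highh$.

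If this clean reduction fails, I would instead rerun the proof of Theorem~1.3 from scratch in the $n$-pair context. That proof presumably shows, given any $\Pi^0_2$ set $P$, how to build a $\Sigma^0_2$ set $Z_P <_e A$ encoding $P$ and then decode $P$ enumeration-reducibly from the right half; this is what forces the right half to enumerate every $\Pi^0_2$ set, i.e.\ to be $\highh$. For the $n$-pair, I would arrange the construction so that in addition $Z_P \not\leq_e C_i$; then the $n$-pair condition forces $Z_P \leq_e B_i$, yielding $P \leq_e B_i$ uniformly in $P$, which is exactly the statement that $B_i$ is $\highh$. The main obstacle I expect is this extra diagonalization step: the coding must produce a family of $Z_P$'s parametrized by arbitrary $\Pi^0_2$ sets that all stay strictly below $A$ while avoiding reducibility to $C_i$ (and hence to each $B_j$, $j\neq i$, individually). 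This is exactly where the $n$-join irreducibility of $A$ and the strength of ``proper'' should come in, and verifying that together they give enough room to carry out the coding uniformly is the technical heart of the argument.
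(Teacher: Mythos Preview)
Your intuition to use properness to isolate $B_i$'s contribution is right, but the execution has real errors, and you miss a much cleaner reduction.

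First, the gap in your first attempt: you cannot ``peel off $C_i$'' from $B_i \oplus C_i$ using upward closure of $\highh$; that goes the wrong direction. Knowing $B_i \oplus C_i$ is $\highh$ tells you nothing about $B_i$ alone. Your induction suggestion also does not work: if deleting $B_i$ leaves something that is \emph{not} an Ahmad $(n-1)$-pair, the induction hypothesis does not apply to it. More subtly, working with the join $C_i = \bigoplus_{j\neq i} B_j$ is the wrong move: the properness witness $D$ satisfies $D \not\leq_e B_j$ for each $j\neq i$, but that does \emph{not} imply $D \not\leq_e C_i$, so your ``sub-ideal of $Z$'s not reducible to $C_i$'' may fail to contain the very witness that properness gives you.

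The paper's argument avoids all of this with one observation. Let $D$ be the essentialness witness for $B_i$, so $D \leq_e A, B_i$ and $D \not\leq_e B_j$ for $j\neq i$. Then $(A,B_i)$ is an Ahmad pair \emph{in the cone above $D$}: if $D \leq_e Z <_e A$, the $n$-pair condition gives $Z \leq_e B_j$ for some $j$, but then $D \leq_e Z \leq_e B_j$ forces $j = i$. The paper has a separate lemma (Lemma~\ref{lem:pairCone}) showing that the right half of an Ahmad pair in a cone is $\highh$; that lemma is where the skip analysis ($A^\diamond \leq_e B^\diamond$ via a set strictly below $A$ in the cone, then $A^{\langle 3\rangle} \leq_1 \mathcal F \leq_e B^{\langle 2\rangle}$) actually lives. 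Your guess about the mechanism of Theorem~1.3 --- uniformly coding $\Pi^0_2$ sets into $Z_P <_e A$ --- is not how the paper proceeds, and pursuing it would lead you into an unnecessary construction. The entire proof of the theorem in question is two lines once Lemma~\ref{lem:pairCone} is in hand.
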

This then shows that if $B_i$ is a right half of a proper Ahmad $n$-pair, it cannot be the left half of an Ahmad $m$-pair for any $n,m\geq 1$. This extends a result in an upcoming paper \cite{onePtExt} where they show that at least one of the $B_i$'s is not the left half of an Ahmad $m$-pair for any $m\geq 1$. In Theorem~\ref{thm:nIrrConst}, for every $n\geq 1$, we build a set that is $n$-join reducible and $(n+1)$-join irreducible. This allows us to extend the work in \cite{extensions}. 
\begin{thm}
    For every $n \geq 1$ there is a $\Sigma^0_2$ set $A$ which is the left half of an Ahmad $(n+1)$-pair but not the left half of any Ahmad $n$-pair.
\end{thm}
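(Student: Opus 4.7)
The plan is to deduce this theorem from Theorem~\ref{thm:nIrrConst} combined with the characterization of left halves of Ahmad $k$-pairs established earlier. That characterization asserts that a $\Sigma^0_2$ set $A$ is the left half of an Ahmad $k$-pair if and only if $A$ is $\lowww$ and $k$-join irreducible. Hence it suffices, for each $n \geq 1$, to exhibit a $\Sigma^0_2$ set $A$ that is simultaneously $\lowww$, $(n+1)$-join irreducible, and $n$-join reducible. Note that one containment is automatic: if $A$ is the left half of an Ahmad $n$-pair $(A, B_0, \dots, B_{n-1})$, then $(A, B_0, \dots, B_{n-1}, \emptyset)$ is an Ahmad $(n+1)$-pair (using that $A$ is non-c.e.\ since it is join irreducible of positive degree), so $(n+1)$-join irreducibility is strictly weaker than $n$-join irreducibility, and strictness is exactly what we must witness.

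Theorem~\ref{thm:nIrrConst} already produces a $\Sigma^0_2$ set that is $n$-join reducible and $(n+1)$-join irreducible, so the essential remaining task is to ensure that the set it produces can be taken to be $\lowww$. My plan is to carry out the construction of Theorem~\ref{thm:nIrrConst} with standard $\lowww$-ness requirements added at the appropriate priority levels. In broad strokes, the resulting construction has: positive requirements that code $n$ explicit proper predecessors $X_0, \dots, X_{n-1}$ of $A$ whose filter-cover witnesses $n$-join reducibility; negative requirements, one per $(n+1)$-tuple of enumeration operators $\langle \Psi_0, \dots, \Psi_n \rangle$, that diagonalize to spoil any would-be $(n+1)$-join reduction; and additional negative requirements controlling the enumeration jump of $A$ to guarantee $\lowww$-ness.

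The main obstacle is the construction of Theorem~\ref{thm:nIrrConst} itself: balancing the positive coding for $n$-join reducibility against the negative diagonalization for $(n+1)$-join irreducibility. The crucial point is a numerical slack---any candidate $(n+1)$-covering has one more coordinate than the $n$ predecessors being coded, so after the coding has allocated its commitments, at least one coordinate of any incoming candidate remains uncommitted and can be targeted by the diagonalization. Once that core construction is in place, adding the lowness requirements is routine: they fit naturally into the $0'''$-priority framework that the basic construction already operates in, and they interact with neither the coding nor the diagonalization beyond standard priority adjustments. The theorem then follows by applying the characterization to the resulting $A$: the $\lowww$-ness together with $(n+1)$-join irreducibility makes $A$ the left half of an Ahmad $(n+1)$-pair, and $n$-join reducibility rules out any Ahmad $n$-pair with $A$ as left half.
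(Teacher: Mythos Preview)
Your approach is exactly the paper's: combine Theorem~\ref{thm:nIrrConst} with the characterization Theorem~\ref{thm:npairchar}. However, you have misread what Theorem~\ref{thm:nIrrConst} provides. Its statement is that for every $n\geq 1$ there is a set $A$ which is \emph{low}, $n$-join irreducible and $(n-1)$-join reducible; the construction already carries the lowness requirements $\mathcal L_k^x$ you propose to ``add''. So no augmentation is needed: the set $A$ from Theorem~\ref{thm:nIrrConst} (with the index shift $n\mapsto n+1$) is already low, hence $\lowww$, and the corollary follows immediately from Theorem~\ref{thm:npairchar}, exactly as in the paper's two-line proof of Corollary~\ref{cor:npairs}.
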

In Theorem~\ref{thm:AE}, using the results in this paper, we decide a larger fragment of the $\forall \exists$-theory of the local structure.
\section{Preliminaries}
Enumeration reducibility seeks to capture the notion of computational strength, where we only care about `positive' information, i.e.\ if an element is in the set. 

An enumeration of a set is a surjective function from $\omega$ onto the set. Given non-empty sets $A, B \subseteq \omega$ we say $A$ is enumeration reducible to $B$, denoted as $A \leq_e B$, if every enumeration of $B$ computes an enumeration of $A$. By Selman's theorem, this can equivalently be defined as $A \leq_e B$ if there is a c.e.\ set $\Gamma$ such that $x \in A \iff \langle x,e\rangle \in \Gamma$ and $D_e \subseteq B$ (where $D_e$ is a finite set given in terms of its canonical index). Let $\{\Gamma_e\}_e$ be a standard listing of all c.e.\ sets.
\begin{defn}
   For a set $X$ let $K_X = \bigoplus_{e\in \omega} \Gamma_e(X)$. Then the \emph{jump} of $X$ is defined as $X' = K_X \oplus \overline{K_X}$ and the \emph{skip} of $X$ is defined as $X^\diamond = \overline{K_X}$. 
\end{defn}
Note that $A \leq_e X$ if and only if $A \leq_1 K_X$. We will let $X^{\langle n \rangle}$ denote the $n^{th}$ skip of $X$. In this paper we use $X'$ to mean the enumeration jump, not the Turing jump. To avoid confusion, in the case of $\emptyset$, we denote its jump by $0'_e$. The collection $\{X \subseteq \omega: X \leq_e 0'_e\}$ are precisely the $\Sigma^0_2$ sets. 

We say a set $X$ is total if $\overline X \leq_e X$, i.e.\ the positive part of the set can tell us about its negative part. The map $\iota : \mathcal P(\omega) \to \mathcal P(\omega)$ defined by $\iota(X) = X \oplus \overline X$ induces an embedding of the Turing degrees into the enumeration degrees as a partial order preserving join and jump. The image of this embedding is precisely the total degrees (degrees containing a total set).

The authors in \cite{cototSkip} show that the jump and skip operation coincide precisely on the cototal degrees---degrees containing a set whose complement is total. The cototal degrees include the total degrees as well as the $\Sigma^0_2$ degrees. In particular, if $X\leq_e 0'_e$ then $X' \equiv_e X^\diamond$, however they are not $1$-equivalent in general.

Note that $X' >_e X$ for every $X \subseteq \omega$. The authors in \cite{cototSkip} construct sets $Y\subseteq \omega$ for which $Y^\diamond \not \geq_e Y$. Therefore the jump and skip are not the same operations on the global structure of the enumeration degrees.

Recall that a $\Sigma^0_2$ set $X$ is \emph{$\lowww$} if $X''' \leq_e 0'''_e$ and is \emph{$\highh$} if $X'' \geq_e 0'''_e$. The following characterization will be useful.
\begin{lem}\label{lem:poststheorem}
The following are equivalent for a set $X$ and for $n \in \omega$.
\begin{enumerate}
\item $X \leq_e 0^{(n)}$.
\item $X \leq_e 0^{\langle n \rangle}$.
\item $X$ is $\Sigma^0_{n+1}$.
\end{enumerate}
\end{lem}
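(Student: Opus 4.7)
My plan is to prove the three equivalences by induction on $n$. The base case $n = 0$ is immediate: both $0^{(0)}$ and $0^{\langle 0 \rangle}$ are $\emptyset$, and $X \leq_e \emptyset$ holds exactly when $X$ is c.e., i.e., $\Sigma^0_1$. The work is in pushing the equivalences from level $n$ to level $n+1$.

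For (2) $\Leftrightarrow$ (3) the key step is a completeness observation. Assuming inductively that the sets enumeration reducible to $0^{\langle n \rangle}$ are exactly the $\Sigma^0_{n+1}$ sets, the set $K_{0^{\langle n \rangle}} = \bigoplus_e \Gamma_e(0^{\langle n \rangle})$ uniformly lists all $\Sigma^0_{n+1}$ sets, and is therefore $\Sigma^0_{n+1}$-complete under $\leq_1$; its complement $0^{\langle n+1 \rangle}$ is then $\Pi^0_{n+1}$-complete. To deduce that $X \leq_e 0^{\langle n+1 \rangle}$ iff $X$ is $\Sigma^0_{n+2}$, I would unpack the definition of $\leq_e$ via Selman: if $X \leq_e 0^{\langle n+1 \rangle}$ through a c.e.\ set $\Gamma$, then $x \in X$ iff $\exists e\,(\langle x, e\rangle \in \Gamma \wedge D_e \subseteq 0^{\langle n+1 \rangle})$, a $\Sigma^0_1$ quantifier over a $\Pi^0_{n+1}$ predicate, hence $\Sigma^0_{n+2}$. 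Conversely, any $\Sigma^0_{n+2}$ set is the projection of a $\Pi^0_{n+1}$ relation, which $1$-reduces to $0^{\langle n+1 \rangle}$ by completeness, and this $1$-reduction packages into an enumeration operator witnessing $X \leq_e 0^{\langle n+1 \rangle}$.

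For (1) $\Leftrightarrow$ (3), I would appeal to classical Post's theorem through the fact that $0^{(n)}$ is total, being an enumeration jump, with associated Turing degree $\emptyset^{(n)}$. Since $X \leq_e Y$ for a total set $Y$ amounts to $X$ being c.e.\ in $Y$ in the Turing sense, we obtain $X \leq_e 0^{(n)}$ iff $X$ is c.e.\ in $\emptyset^{(n)}$ iff $X$ is $\Sigma^0_{n+1}$. The only real obstacle is careful bookkeeping at the base case and making sure completeness is asserted at the $\leq_1$ level so that it survives complementation when passing from $K_{0^{\langle n \rangle}}$ to $0^{\langle n+1 \rangle}$; the rest is a direct translation of Post's theorem into the enumeration setting.
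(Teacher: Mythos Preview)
Your argument is correct. The paper, however, states this lemma without proof: it is treated as a standard fact (the enumeration-degree analogue of Post's theorem), and no demonstration is given in the text.

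That said, the paper's surrounding remarks suggest a slightly different organization. Just before the lemma it is recalled that the jump and skip coincide in degree on cototal sets, and that total (hence iterated-jump) sets are cototal. Applied inductively this gives $0^{(n)} \equiv_e 0^{\langle n\rangle}$ directly, yielding (1) $\Leftrightarrow$ (2) without passing through the arithmetical hierarchy; one then only needs (1) $\Leftrightarrow$ (3), which is exactly classical Post together with the observation (also in the preliminaries) that $\iota$ preserves jump and that $X \leq_e Y$ for total $Y$ is the same as $X$ being c.e.\ in $Y$. Your route instead establishes (2) $\Leftrightarrow$ (3) by an explicit completeness argument for $K_{0^{\langle n\rangle}}$ and its complement; this is slightly more work but has the virtue of being self-contained and of making the $\Pi^0_{n+1}$-completeness of $0^{\langle n+1\rangle}$ visible, a fact the paper later leans on (e.g.\ in Lemma~\ref{lem:pi3complete}). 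Either packaging is fine.
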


If $A, B \leq_e C$, then so is their union, intersection and projection. This will prove useful to analyze the complexity of many sets in the rest of this paper.
\begin{lem} \label{lem:andorproj}
Let $A = \Gamma_m(C)$ and $B = \Gamma_n(C)$.
\begin{enumerate}
\item $A \cup B = \Theta(C)$ where $\Theta = \{\langle x , D \rangle:  \langle x, D\rangle \in \Gamma_m \text{ or } \langle x, D\rangle \in \Gamma_n\}$. 
\item $A \cap B = \Delta(C)$ where $\Delta = \{\langle x , D \rangle: \exists \langle x, E\rangle \in \Gamma_m, \langle x, F\rangle \in \Gamma_n (D = E\cup F)\}$.
\item $\{x : \exists i \; \langle x, i \rangle \in A\} = \Phi(C)$ where $\Phi = \{\langle x,D\rangle : \exists i\; \langle \langle x,i\rangle,D\rangle \in \Gamma_m\}$.
\end{enumerate}
\end{lem}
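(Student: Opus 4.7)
The plan is to verify each of the three clauses in turn, in each case by checking (i) that the proposed index set is c.e., and (ii) that the enumeration operator $\Psi(C) := \{x : \exists D\,(\langle x, D\rangle \in \Psi \text{ and } D \subseteq C)\}$ unwinds to the claimed set. Both checks reduce to standard closure properties of c.e.\ sets (union, intersection, bounded/existential projection) together with unpacking the definitions of $A = \Gamma_m(C)$ and $B = \Gamma_n(C)$.

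For clause (1), I would just note that $x \in \Theta(C)$ iff some finite $D \subseteq C$ makes $\langle x, D\rangle$ an axiom of $\Gamma_m$ or of $\Gamma_n$, which is literally membership in $A \cup B$; and $\Theta$ is c.e.\ as a union of two c.e.\ sets. Clause (3) is similarly mechanical: $y \in \Phi(C)$ iff some pair $(i, D)$ with $D \subseteq C$ witnesses $\langle \langle y, i\rangle, D\rangle \in \Gamma_m$, which is the same as $\exists i\,(\langle y, i\rangle \in A)$, and existential projection preserves c.e.-ness.

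The only step that calls for a small observation is clause (2). Here membership in $A \cap B$ is witnessed by \emph{two} finite sets $E, F \subseteq C$ coming from $\Gamma_m$ and $\Gamma_n$ respectively, whereas an enumeration operator needs a \emph{single} finite witness $D \subseteq C$. I would resolve this by setting $D := E \cup F$, which is finite and contained in $C$, and which by construction places $\langle x, D\rangle$ into $\Delta$; conversely, any axiom $\langle x, D\rangle \in \Delta$ with $D \subseteq C$ supplies $E, F \subseteq D \subseteq C$ witnessing $x \in A$ and $x \in B$. That $\Delta$ is itself c.e.\ follows from existential quantification over the pair $(E, F)$ of canonical indices together with the computability of the predicate ``$D = E \cup F$'' on finite sets. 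There is no genuine obstacle in the proof; the lemma is essentially a bookkeeping statement allowing one to carry enumeration reductions through the Boolean and projective operations used later in the paper.
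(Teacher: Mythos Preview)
Your proposal is correct and complete. The paper itself states this lemma without proof, treating it as an immediate consequence of the definitions; your write-up supplies exactly the routine verification the paper omits, and the one nontrivial point you flag---combining the two witnesses $E,F$ into a single $D=E\cup F$ for clause~(2)---is the right observation.
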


In the rest of this paper, unless mentioned otherwise, all the sets will be $\Sigma^0_2$ sets. For the few results which apply globally, we shall explicitly mention the more general fact. All other results only apply to the local structure.

\section{Ahmad Sequences and the Left Half}
For any set $A$, we can always uniformly enumerate $\Sigma^0_2$ indices for all the sets in $\{X\subseteq \omega: X\leq_e A\}$ by the parameter theorem. However to build Ahmad pairs we need uniform access to the sets \emph{strictly} below $A$. The notion of an Ahmad sequence captures this precisely.
\begin{defn}
A computable sequence of $\Sigma^0_2$ sets $\{\Gamma_{f(n)}(0'_e)\}_n$ is an \emph{Ahmad sequence} for $A$ if $\{\Gamma_{f(n)}(0'_e)\}_n = \{ X \subseteq \omega: X <_e A\}$. 
\end{defn}
We will show that the left half of an Ahmad pair always has an Ahmad sequence. To this end, we first show that given two uniform sequences of sets, we can uniformly enumerate their intersection.
\begin{lem}\label{lem:uniformEnum}
    Let $f, g$  be computable functions with $\{ F \subseteq \omega: |F| < \infty\} \subseteq \{\Gamma_{f(n)}(0'_e)\}_n, \{\Gamma_{g(n)}(0'_e)\}_n$. Then there is a computable function $h$ such that $\{\Gamma_{h(n)}(0'_e)\}_n = \{\Gamma_{f(n)}(0'_e)\}_n \cap \{\Gamma_{g(n)}(0'_e)\}_n$.
\end{lem}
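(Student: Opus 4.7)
The plan is to construct, for each pair $(m,n)$, a $\Sigma^0_2$ set $Z_{m,n} = \Gamma_{h(\langle m,n\rangle)}(0'_e)$ which equals $\Gamma_{f(m)}(0'_e)$ when $\Gamma_{f(m)}(0'_e) = \Gamma_{g(n)}(0'_e)$ and is a finite set otherwise. Since the hypothesis guarantees that all finite sets belong to both sequences, every $Z_{m,n}$ will automatically lie in the intersection, while letting $(m,n)$ range over all pairs should recover every set common to both lists.

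To set this up I would fix uniformly computable $\Sigma^0_2$ approximations $X_m[s]$ and $Y_n[s]$ (inherited from the standard approximation of $0'_e$) to $X_m := \Gamma_{f(m)}(0'_e)$ and $Y_n := \Gamma_{g(n)}(0'_e)$, with the liminf property that $x \in X_m$ iff $x \in X_m[s]$ for cofinitely many $s$. The construction then proceeds by stages. At each stage $s$, check the ``trip-wire'' condition $X_m[s'] \cap [0,s'] = Y_n[s'] \cap [0,s']$ for every $s' \leq s$. If it holds at stage $s$, copy every axiom currently in $\Gamma_{f(m),s}$ into $\Gamma_{h(\langle m,n\rangle)}$; from the first stage $s^*$ at which the condition fails, stop adding any new axioms.

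The verification splits into two cases. If the trip-wire holds at every stage, then in the limit $\Gamma_{h(\langle m,n\rangle)} = \Gamma_{f(m)}$, so $Z_{m,n} = X_m$; and the persistent agreement on $[0,s]$ for every $s \geq k$ implies, via the liminf characterization of the approximations, that $X_m \cap [0,k] = Y_n \cap [0,k]$ for every $k$, whence $X_m = Y_n$. If the trip-wire fails at some stage $s^*$, then only the finitely many axioms already in $\Gamma_{f(m),s^*}$ have been enumerated, which makes $Z_{m,n}$ finite. Either way $Z_{m,n}$ lies in both sequences, and the map $h$ itself is computable because the stage-wise tests and axiom enumerations are uniformly computable in $m,n$.

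The main obstacle I expect is the reverse inclusion: arguing that every set $W$ in the intersection arises as some $Z_{m,n}$. For finite $W$, one can use the canonical finite-axiom reduction $\{\langle x,\emptyset\rangle : x \in W\}$, whose approximation is the constant sequence $W$, so that the trip-wire trivially persists forever and $Z_{m,n} = W$ for the matching indices in both sequences. For infinite $W$ in the intersection, the delicate point is to locate index pairs $(m,n)$ with $X_m = W = Y_n$ whose stagewise approximations remain in lock-step well enough for the trip-wire never to trip; carrying this out cleanly is the technical crux and is where one would likely exploit either the ability to pass to equivalent ``nice'' reductions or a weaker, more robust form of the trip-wire.
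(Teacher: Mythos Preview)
Your overall plan---for each pair $(m,n)$ output $X_m$ when $X_m=Y_n$ and a finite set otherwise---is exactly the paper's strategy, and your forward inclusion is fine. The gap you flag in the reverse inclusion, however, is genuine and is fatal for the construction as written. Since $f$ and $g$ are handed to you, you have no control over how the computable $\Sigma^0_2$ approximations $X_m[s]$ and $Y_n[s]$ behave stagewise; there is no reason any pair of indices for a common infinite $W$ should have approximations that stay in lock-step on $[0,s]$ at \emph{every} $s$, so your global trip-wire will typically trip even when $X_m=Y_n$. You cannot fix this by ``passing to nicer indices'' inside the given lists, and any computable-level weakening of the trip-wire runs into the same non-monotonicity of $\Sigma^0_2$ approximations.

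The paper's way out is to change the ambient notion of approximation: it uses that the $\Sigma^0_2$ sets are exactly the sets c.e.\ in $0'$ (with an effective translation of indices) and works with the $0'$-enumerations $W^{0'}_{f(e),s}$, $W^{0'}_{g(i),s}$, which are monotone non-decreasing in $s$. The test is then per-element rather than a one-shot trip-wire: enumerate $x$ into $W^{0'}_{h(\langle e,i\rangle)}$ at stage $s$ if $x\in W^{0'}_{f(e),s}\cap W^{0'}_{g(i),s}$ and the two enumerations agree below $x$ at stage $s$. Monotonicity is what makes both directions automatic: if the sets are equal, then for each $x$ in them the agreement below $x$ is eventually attained, so $x$ gets in; if they first disagree at some $y$, then from the stage $y$ appears in one side, no $z>y$ ever sees agreement below it, so the output is finite. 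Crucially, the reverse inclusion now needs no special choice of indices---\emph{any} $(e,i)$ with $W^{0'}_{f(e)}=W^{0'}_{g(i)}$ works---because the per-element test is insensitive to the relative speeds of the two $0'$-enumerations. Your instinct to look for a ``more robust form of the trip-wire'' was right; the missing idea is that robustness comes from moving up to $0'$ where the approximations are monotone.
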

\begin{proof}
Recall that the $\Sigma^0_2$ sets are precisely those which are c.e.\ relative to $0'$ and we can effectively transform between these two indices. So without loss of generality, let $\{W^{0'}_{f(n)}\}_n$ and $\{W^{0'}_{g(n)}\}$ be the uniform families of $\Sigma^0_2$ sets.

Then we define $h(n)$ as follows:
\[
x\in W_{h(\langle e,i\rangle),s}^{0'}  \iff x\in W_{f(e),s}^{0'}\cap W_{g(i),s}^{0'} \text{ and } W_{f(e),s}^{0'}\uh x = W_{g(i),s}^{0'}\uh x. 
\]

If $W_{f(e)}^{0'}= W_{g(i)}^{0'}$ then we end up successfully copying $W_{f(e)}^{0'}$ into $W_{h(\langle e,i\rangle)}^{0'}$. On the other hand if they disagree, say $x \in W_{f(e)}^{0'}$ and $x \not \in W_{g(e)}^{0'}$, then let $s$ be the least such that $x \in W_{f(e),s}^{0'}$. For all stages $t>s$ we have $W_{f(e),t}^{0'}\uh x \neq W_{g(i),t}^{0'}\uh x$ and so $W_{h(\langle e,i\rangle)}^{0'} \subseteq W_{h(\langle e,i\rangle),s} \cup \{0,1,\dots,x\}$ and is finite.
\end{proof}
\begin{thm}\label{thm:PairSeq}
Let $(A,B)$ be an Ahmad pair. Then $A$ has an Ahmad sequence.
\end{thm}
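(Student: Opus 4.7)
The plan is to exploit the Ahmad property of $(A,B)$ to rewrite $\{X : X <_e A\}$ as an intersection of two easily enumerable families, and then invoke Lemma~\ref{lem:uniformEnum}.

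First I would verify the key set-theoretic identity
\[
\{X \subseteq \omega : X <_e A\} \;=\; \{X : X \leq_e A\} \cap \{X : X \leq_e B\}.
\]
The forward inclusion is immediate from the definition of Ahmad pair. For the reverse, suppose $X \leq_e A$ and $X \leq_e B$. Then $X \not\equiv_e A$, since otherwise $A \leq_e X \leq_e B$, contradicting $A \not\leq_e B$. Combined with $X \leq_e A$, this gives $X <_e A$.

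Next, I would produce a uniform $\Sigma^0_2$ enumeration of $\{X : X \leq_e A\}$. Fix an index $a$ with $A = \Gamma_a(0'_e)$. By the parameter theorem for enumeration operators, composition is uniform: there is a computable function $\alpha$ with $\Gamma_e(A) = \Gamma_{\alpha(e)}(0'_e)$ for every $e$. As $e$ ranges over $\omega$, $\Gamma_e(A)$ ranges exactly over the sets enumeration reducible to $A$, and this family contains every finite set $F$ (take the operator $\{\langle x, \emptyset \rangle : x \in F\}$, whose output is $F$ regardless of oracle). The same argument applied to $B$ yields a computable function $\beta$ enumerating $\{X : X \leq_e B\}$, again containing all finite sets.

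Finally, I would apply Lemma~\ref{lem:uniformEnum} to $\alpha$ and $\beta$, obtaining a computable $h$ with
\[
\{\Gamma_{h(n)}(0'_e)\}_n \;=\; \{\Gamma_{\alpha(n)}(0'_e)\}_n \cap \{\Gamma_{\beta(n)}(0'_e)\}_n \;=\; \{X : X <_e A\},
\]
which is the desired Ahmad sequence. There is no real obstacle here beyond setting up the composition correctly; the entire content of the argument is the observation that the Ahmad property of $(A,B)$ converts the strict-below relation into the intersection of two below-relations, precisely the situation Lemma~\ref{lem:uniformEnum} is designed to handle.
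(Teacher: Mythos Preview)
Your proposal is correct and follows essentially the same approach as the paper: identify $\{X : X <_e A\}$ with $\{X : X \leq_e A\} \cap \{X : X \leq_e B\}$, use the parameter theorem to uniformly enumerate each factor, and apply Lemma~\ref{lem:uniformEnum}. You are simply more explicit than the paper in verifying the reverse inclusion and the finite-set hypothesis of the lemma.
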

\begin{proof}
By the Ahmad pair property, the sets strictly below $A$ are precisely the sets $\{Z: Z\leq_e A\}\cap \{Z:Z\leq_e B\}$. By the parameter theorem both of these families are uniform families of sets and so by the lemma above, $A$ has an Ahmad sequence relative to $C$.
\end{proof}
We can generalize this to the weaker notion of Ahmad $n$-pairs.

\begin{thm}\label{thm:NPairSeq}
Let $(A, B_0,\dots,B_{n-1})$ be an Ahmad $n$-pair. Then $A$ has an Ahmad sequence.
\end{thm}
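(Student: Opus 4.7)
The plan is to adapt the proof of Theorem~\ref{thm:PairSeq}, replacing the single right half $B$ by the finite union of the $B_i$'s in spirit. The crucial identity is
\[
\{Z : Z <_e A\} \;=\; \{Z : Z \leq_e A\} \;\cap\; \bigcup_{i<n}\{Z : Z \leq_e B_i\}.
\]
The left-to-right inclusion is immediate from the Ahmad $n$-pair property. For the reverse, if $Z \leq_e A$ and $Z \leq_e B_i$ for some $i$, then $Z \equiv_e A$ would force $A \leq_e Z \leq_e B_i$, contradicting $A \not\leq_e B_i$; hence $Z <_e A$.

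Next I would produce uniform families realizing each side of the intersection. The parameter theorem provides a computable $f$ with $\{\Gamma_{f(m)}(0'_e)\}_m = \{Z : Z \leq_e A\}$, and for each $i<n$ a computable $g_i$ uniformly enumerating $\{Z : Z \leq_e B_i\}$. Since $n$ is a fixed finite number, interleaving the $g_i$ yields a single computable $g$ whose associated family is $\bigcup_{i<n}\{Z : Z \leq_e B_i\}$. Every finite set is enumeration reducible to every set, so both families already contain all finite sets, and by padding we may assume the enumerations list indices for them explicitly; hence the hypothesis of Lemma~\ref{lem:uniformEnum} is satisfied.

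Finally, applying Lemma~\ref{lem:uniformEnum} to $f$ and $g$ produces a computable $h$ with $\{\Gamma_{h(m)}(0'_e)\}_m$ equal to the intersection, which by the displayed identity is exactly $\{Z : Z <_e A\}$. This is the desired Ahmad sequence for $A$. There is no substantive obstacle beyond the $n=1$ case treated in Theorem~\ref{thm:PairSeq}: the only additional input is closure of uniform enumerability under unions over a fixed finite index set, which is immediate by interleaving.
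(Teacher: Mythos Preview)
Your proposal is correct and follows essentially the same approach as the paper: the paper forms $\mathcal F_i = \{Z : Z \leq_e A, B_i\}$ for each $i$ (via Lemma~\ref{lem:uniformEnum}) and then takes the finite union, whereas you take the finite union $\bigcup_i \{Z \leq_e B_i\}$ first and intersect with $\{Z \leq_e A\}$ afterward; by distributivity these yield the same family. The verification of the displayed identity and the check that Lemma~\ref{lem:uniformEnum} applies are both fine.
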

\begin{proof}
We can uniformly list the sets $\mathcal F_i := \{Z :Z \leq_e A,B_i\}$. Let $\mathcal F = \bigcup_{i<n} \mathcal F_i$. Then $\mathcal F$ has a uniform enumeration which will be the required Ahmad sequence of $A$. 
\end{proof}
Not all degrees have an Ahmad sequence, for an easy example we have the following:
\begin{thm}\label{thm:simpleExample}
$0'_e$ does not have an Ahmad sequence. 
\end{thm}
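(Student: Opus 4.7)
The plan is a density argument. Suppose for contradiction that $\{\Gamma_{f(n)}(0'_e)\}_n$ is an Ahmad sequence for $0'_e$, and write $A_n := \Gamma_{f(n)}(0'_e)$. I will form the uniform join $B := \bigoplus_n A_n$, which is $\Sigma^0_2$ (so $B \leq_e 0'_e$) and satisfies $A_n \leq_e B$ for every $n$ via the standard column projection. In particular, $B$ is an upper bound, in the enumeration degrees, for every $\Sigma^0_2$ set strictly below $0'_e$.

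The idea is then to invoke density of the $\Sigma^0_2$ enumeration degrees (Cooper) on the interval $(B, 0'_e)$: if $B <_e 0'_e$, one finds a $\Sigma^0_2$ set $C$ with $B <_e C <_e 0'_e$, whence by the Ahmad sequence property $C = A_m$ for some $m$; then $C = A_m \leq_e B$ contradicts $B <_e C$.

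The main obstacle is ruling out $B \equiv_e 0'_e$, since $0'_e$ is join-reducible (e.g.\ by Sacks splitting transferred along the embedding of the c.e.\ Turing degrees), so the Ahmad sequence must contain a cupping pair $X, Y <_e 0'_e$ with $X \oplus Y \equiv_e 0'_e$; hence the bare fact that each $A_n <_e 0'_e$ is insufficient. To close this gap I would exploit that any hypothetical c.e.\ operator $\Psi$ witnessing $0'_e \leq_e B$ is itself a c.e.\ set and so of $e$-degree $0_e <_e 0'_e$, forcing $\Psi$ to appear as some $A_k$ in the Ahmad sequence; I would then aim to derive a contradiction via a recursion-theoretic / self-referential diagonalization exploiting this occurrence, or alternatively via the following index-set argument: naively ``$\Gamma_e(0'_e) <_e 0'_e$'' is $\Pi^0_4$ (being the negation of the $\Sigma^0_4$ condition ``$0'_e \leq_e \Gamma_e(0'_e)$''), and one expects it to be properly $\Pi^0_4$ by analogy with the classical $\Sigma^0_4$-completeness of ``$W_e \equiv_T \emptyset'$''; on the other hand, an Ahmad sequence would re-describe it as ``$\exists n, \Gamma_e(0'_e) = A_n$'', which, since set-equality of $\Sigma^0_2$ sets is $\Pi^0_3$, is $\Sigma^0_4$, contradicting the $\Pi^0_4$-hardness.
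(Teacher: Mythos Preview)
Your density argument is a genuine dead end, and not just an obstacle: since $0'_e$ is join-reducible, your uniform join $B$ \emph{always} satisfies $B \equiv_e 0'_e$, so there is nothing to rule out and no way to reach the density step. The suggestion about the operator $\Psi$ having degree $0_e$ and hence appearing as some $A_k$ is true but does not produce a contradiction; the vague hope for a self-referential fix is not a proof.

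Your final index-set argument, however, is exactly the paper's proof, and only needs the ``analogy'' made explicit. There is a computable $g$ with $\Gamma_{g(e)}(0'_e) = \overline{W_e}$ (a $\Pi^0_1$, hence $\Sigma^0_2$, set uniformly in $e$), and since $W_e \equiv_T \emptyset'$ iff $\overline{W_e} \equiv_e 0'_e$, we get $e \in \mathrm{Com}$ iff $g(e) \notin X_{0'_e}$. The $\Sigma^0_4$-completeness of $\mathrm{Com}$ then gives the $\Pi^0_4$-hardness of $X_{0'_e}$ that you asserted. The paper presents this same reduction more directly: it observes that an Ahmad sequence yields a $\Pi^0_4$ description of $\mathrm{Com}$ itself, namely $e \in \mathrm{Com}$ iff $\forall n\, (\overline{W_e} \neq A_n)$, contradicting $\Sigma^0_4$-completeness. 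So once you strip away the density detour, your approach and the paper's coincide; the only gap is that you gestured at the hardness step rather than writing down the one-line reduction.
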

\begin{proof}
The set $\text{Com}=\{e : W_e \equiv_T 0'\}$ of complete c.e.\ sets is $\Sigma^0_4$ complete \cite{soare}.
Suppose $0'_e$ had an Ahmad sequence $\{\Gamma_{f(n)}(0'_e)\}_n$. We will use this to flip quantifiers. Note that $W_e$ is complete if and only if $W_e\oplus\overline W_e \equiv_e \overline W_e \neq \Gamma_{f(n)}(0'_e)$ for every $n$. Let $A_{n,s}$ be a $\Sigma^0_2$ approximation to $\Gamma_{f(n)}(0'_e)$. So $e\in \text{Com} $ if and only if for every $n$ there is an $m,s$ with
\begin{align*}
    \forall t>s&(m \in W_{e,s}\text{ and } \exists r>t\; (m \not \in A_{n,r})) \text{ or }
    (m  \not \in W_{e,t} \text{ and } m \in A_{n,t}). 
\end{align*}
But this is a $\Pi^0_4$ formula, contradicting the $\Sigma^0_4$ completeness of Com. 
\end{proof}
The rest of this section is devoted to completely characterizing the $\Sigma^0_2$ sets with an Ahmad sequence. We write $X^{[i]}$ for the $i^{th}$ column of $X$.

\begin{lem}\label{lem:pi3complete}
    There is a computable function $g$ such that: 
    \begin{enumerate}
        \item $e \in A^{\langle 3 \rangle} \iff \Gamma_{g(e)}^{[i]}(A)$ is finite for every $i$.
        \item $e \not \in A^{\langle 3 \rangle} \iff \Gamma_{g(e)}^{[i]}(A) = \omega$ for some $i$ and finite otherwise.
    \end{enumerate}
\end{lem}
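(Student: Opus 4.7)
The plan is to unroll what $e \in A^{\langle 3 \rangle}$ means into an explicit $\forall \exists \exists \forall \forall$ statement over the c.e.\ axiom sets of the operators $\Gamma_i$, and then to encode that statement as finiteness of columns of a single enumeration operator applied to $A$. Writing $e = \langle e_0, e_1 \rangle$ and letting $\Gamma_i^j := \{D : \langle j, D \rangle \in \Gamma_i\}$ denote the c.e.\ set of axioms deriving $j$ via $\Gamma_i$, three applications of $X^\diamond = \overline{K_X}$ with $K_X = \bigoplus_i \Gamma_i(X)$ should yield
\[
e \in A^{\langle 3 \rangle} \iff \forall D \in \Gamma_{e_0}^{e_1}\ \exists d \in D\ \exists E \in \Gamma_{d_0}^{d_1}\ \forall f \in E\ \forall F \in \Gamma_{f_0}^{f_1}\ (F \not\subseteq A),
\]
where $d = \langle d_0, d_1\rangle$ and $f = \langle f_0, f_1\rangle$; the negation is the symmetric $\exists \forall \forall \exists \exists$ statement with $F \subseteq A$ at the bottom.

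The construction of $g$ is driven by using the outer $\forall D$ to index columns: column $n$ of $\Gamma_{g(e)}(A)$ will equal $\omega$ precisely when the $n$th top-level axiom $D_n$ witnesses $e \notin A^{\langle 3 \rangle}$, and will otherwise be finite. The plan is to fix a computable enumeration $(D_n)_n$ of $\Gamma_{e_0}^{e_1}$ (with repetition if that set is finite, and to let $\Gamma_{g(e)}$ be empty if the set is empty) and, uniformly in $n$, a computable enumeration $(d^{n,t}, E^{n,t})_t$ of the pairs $\{(d, E) : d \in D_n,\ E \in \Gamma_{d_0}^{d_1}\}$. Then, for each $n$, $s$ and each choice sequence $(f_t, F_t)_{t \leq s}$ with $f_t \in E^{n,t}$ and $F_t \in \Gamma_{(f_t)_0}^{(f_t)_1}$, place the axiom $\langle \langle n, s\rangle,\, F_1 \cup \dots \cup F_s\rangle$ into $\Gamma_{g(e)}$. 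All of this is uniformly c.e.\ in $e$, so $g$ is computable.

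The verification should be essentially definitional. By construction, $\langle n, s\rangle \in \Gamma_{g(e)}(A)$ iff there is a choice sequence whose combined union lies in $A$, i.e.\ iff for each of the first $s$ pairs $(d^{n,t}, E^{n,t})$ there exist $f_t \in E^{n,t}$ and $F_t \in \Gamma_{(f_t)_0}^{(f_t)_1}$ with $F_t \subseteq A$. Hence column $n$ equals $\omega$ exactly when every enumerated pair for $D_n$ admits such a witness, which by the unrolling is exactly the condition that $D_n$ refutes $e \in A^{\langle 3 \rangle}$; and if the first unwitnessed pair has index $t_0$, column $n$ is contained in $\{\langle n, s\rangle : s < t_0\}$ and so finite. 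Combining, $e \in A^{\langle 3 \rangle}$ iff no $D_n$ witnesses failure iff every column is finite, giving (1); while $e \notin A^{\langle 3 \rangle}$ iff some $D_n$ does, in which case the corresponding column is $\omega$ and the remaining columns are finite, giving (2).

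The hard part will be the unrolling step itself, since each element at one layer simultaneously serves as an axiom index at the next, so the three layers of $\forall$ and $\exists$ over axioms must be tracked carefully and the alternation of $\subseteq A$ versus $\not\subseteq A$ kept consistent with the alternation of $\diamond$. After that the operator construction is a clean encoding: the inner $\forall$-over-axioms inside a fixed $D_n$ is handled by demanding that a union of positive witnesses all sit inside $A$, and the outer $\exists$ over the top-level axioms is handled by spreading potential witnesses across distinct columns.
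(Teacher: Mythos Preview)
Your proposal is correct and takes essentially the same approach as the paper: unroll $A^{\langle 3\rangle}$ into a $\forall\exists$-over-(positive-in-$A$) form, use the outer $\forall$ to index columns, and make each column an initial segment determined by how long the inner existential witness fails. The paper abstracts your explicit axiom-level predicate to ``$\forall i\,\exists j\,R(e,i,j)$ with $\neg R\leq_e A$'' and then sets $\Gamma_{g(e)}^{[i]}(A)=\{j:\forall j'<j\,\neg R(e,i,j')\}$, which is exactly your choice-sequence construction once you identify $i$ with the enumerated $D_n$ and $j$ with the enumerated pair $(d,E)$; your union-of-$F_t$'s is just the concrete enumeration operator realizing this. (One small imprecision: in case~(2) several $D_n$ may witness failure, so several columns may equal $\omega$; the paper's proof has the same feature, and ``finite otherwise'' should be read as ``each column is either $\omega$ or finite.'')
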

\begin{proof}
 $A^{\langle 3 \rangle}$ is $\Pi_3$ asking only negative information about $A$:
 \begin{align*}
 e \in A^{\langle 3 \rangle} &\iff e \not \in \Gamma_e(A^{\langle 2 \rangle}) \iff \forall \langle e, D \rangle \in \Gamma_e (D \not \subseteq A^{\langle 2 \rangle})
 \\
 &\iff \forall \langle e,D \rangle \in \Gamma_e \exists n \in D (n \in \Gamma_n(\overline {K_A})) \\ &\iff \forall \langle e,D \rangle \in \Gamma_e \exists  \langle n , F \rangle \in \Gamma_n \; (n \in D \;\text{ and }\; F \subseteq \overline{K_A}).
 \end{align*}
  To simplify notation let $e\in A^{\langle 3 \rangle} \iff \forall i \exists j  R(e,i,j)$ where $\neg R(e,i,j) \leq_e A$. Then define $g$ as follows:  Given an $e$, let $X_e$ be the set where we enumerate $j$ into $X_e^{[i]}$ if $\forall j'<j$ $\neg R(e,i,j')$. Then $X_e \leq_e A$ and since we can go from $e \to X_e$ uniformly, there is a computable $g$ such that $X_e = \Gamma_{g(e)}(A)$.
  
 Now if $e \in A^{\langle 3 \rangle}$, then for every $i$ we have $\Gamma_{g(e)}^{[i]}(A)\subseteq \omega \uh j$ for the least $j$ for which $ R(e,i,j)$ holds.  If $e \not \in A^{\langle 3 \rangle}$, let $i$ be the first such that $\forall j  \neg R(e,i,j)$. Then $\Gamma_{g(e)}^{[i]} = \omega$.
\end{proof}
\begin{defn}
Let $X,Y$ be $\Sigma_2^0$ sets.
\begin{enumerate}
\item A \emph{good approximation} to $X$ is a computable sequence $\{X_s\}_s$ of finite sets with infinitely many \emph{good stages} $G_X :=\{s: X_s \subseteq X\}$ such that $\lim_{s \in G_X} X_s(n) = X(n)$ for every $n$.
\item A \emph{correct approximation} $\{Y_s\}$ to $Y$ with respect to a good approximation $\{X_s\}$ to $X$ is an approximation where $G_X \subseteq G_Y$ and $\lim_{s \in G_X} Y_s(n) = Y(n)$.
\end{enumerate}
\end{defn}
Every $\Sigma_2^0$ set $X$ has a good approximation (implict in \cite{cooper})  and if $Y = \Gamma_e(X)$, the natural approximation $\{\Gamma_{e,s}(X_s)\}$ to $Y$ is a correct (but not necessarily good) approximation with respect to a good approximation $\{X_s\}$ to $X$. Also observe that if $Y$ has a correct approximation with respect to a good approximation of $X$, then $Y \leq_e  X$.

The motivation behind the notion of correct approximations is that if we restrict these approximations to good stages, they behave like $\Delta^0_2$ approximations. In particular, they have a limit. We then get the desirable property that the length of agreement goes to $\infty$ precisely when the two sets are equal.
\begin{lem}\label{lem:incompleteset}
    Let $A$ be a non-c.e.\ set with a good approximation $\{A_s\}_s$ to $A$. There is a uniform sequence $\{ \Theta_e\}$ of enumeration operators (depending on an approximation to $A$) such that: 
    \begin{itemize}
        \item $\Theta_e(A) <_e A \iff \Gamma_e^{[i]}(A)$ is finite for every $i$ and 
        \item $\Theta_e(A) \geq_e A \iff \Gamma_e^{[i]}(A)$ is infinite for some $i$.
    \end{itemize}
    Moreover, given any $\Delta(A) <_e A$, we can ensure that $\Theta_e(A) \geq_e \Delta(A)$ for every $e$.
\end{lem}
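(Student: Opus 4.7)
The plan is to construct each $\Theta_e$ by enumerating two kinds of axioms. The first kind is a copy of each axiom of $\Delta$, placed on a reserved sub-coordinate of the output of $\Theta_e$; this unconditionally gives $\Delta(A) \leq_e \Theta_e(A)$ and takes care of the moreover clause. The second kind are growth axioms tied to the correct approximations: at each stage $s$, for each pair $(n, k)$ such that $n \in A_s$ and $C_s := \Gamma_{e,s}(A_s)$ satisfies $|C_s^{[k]}| \geq n+1$, we enumerate an axiom $(\langle n,k,s\rangle, G)$ with $G = A_s \cup \{n\} \cup F$, where $F$ is a finite support drawn from the axioms of $\Gamma_e$ witnessing that column $k$ of $\Gamma_e(A)$ has at least $n+1$ elements at stage $s$. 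Including $A_s$ in $G$ forces the axiom to be able to fire only when $s$ is a good stage of the approximation, so the growth witnessed by a firing axiom is growth actually present in $\Gamma_e(A)$.

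For the easier direction, if some column $\Gamma_e^{[k^*]}(A)$ is infinite, then along good stages $|\Gamma_{e,s}^{[k^*]}(A_s)|$ tends to infinity. For each $n \in A$, picking a good stage $s$ with $n \in A_s$ and $|\Gamma_{e,s}^{[k^*]}(A_s)|$ large enough to furnish a support for column $k^*$ makes the corresponding growth axiom fire, and I recover a reduction $A \leq_e \Theta_e(A)$ by projecting firings onto their $n$-coordinate.

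The main obstacle is the converse: when every column of $\Gamma_e(A)$ is finite, one must show $\Theta_e(A) <_e A$. The subtle subcase is the one in which each $\Gamma_e^{[k]}(A)$ is individually finite but $\sup_k |\Gamma_e^{[k]}(A)| = \infty$, since then the naive construction above still yields firing growth axioms for every $n \in A$ and would give $\Theta_e(A) \equiv_e A$. To circumvent this, the construction must be refined so that the support of a firing growth axiom certifies growth concentrated in a \emph{single fixed} column, witnessed at a sequence of consistent good stages whose length is tied to $n$, rather than growth spread across many columns. Under this refinement, when every column of $\Gamma_e(A)$ is finite each column $k$ supports only boundedly many firings (bounded by its size $|\Gamma_e^{[k]}(A)|$), and one argues that the aggregate enumeration content of $\Theta_e(A)$ beyond $\Delta(A)$ is e-reducible to $\Delta(A)$ itself, giving $\Theta_e(A) \equiv_e \Delta(A) <_e A$.

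Implementing this refinement uniformly in $e$ while keeping $\Theta_e$ an enumeration operator, and invoking the non-c.e.\ hypothesis on $A$ to rule out spurious behaviour of the correct approximation $\{\Gamma_{e,s}(A_s)\}$, is the technical crux of the proof. The ``moreover'' clause for an arbitrary $\Delta(A)<_e A$ will fall out automatically from the first kind of axioms, so the main effort is dedicated to simultaneously achieving the positive direction above good stages of the infinite column and the negative direction via the column-localization of firings.
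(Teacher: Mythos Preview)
Your passive-encoding approach differs from the paper's, and the gap you yourself flag is not closed by the proposed refinement. In your construction the firing of a growth axiom $\langle n,k,s\rangle$ is tied directly to the bit ``$n\in A$'' (it sits in the support $G$). In the troublesome case where every $\Gamma_e^{[k]}(A)$ is finite but $\sup_k |\Gamma_e^{[k]}(A)|=\infty$, for each $n\in A$ there is some $k$ whose column eventually and stably has size $\geq n+1$ along good stages, so the corresponding axiom fires; conversely any firing forces $n\in A$. Hence $n\in A \iff \exists k,s\,(\langle n,k,s\rangle\in\Theta_e(A))$ and $A\leq_e\Theta_e(A)$. Your refinement---demanding a longer run of consistent good stages---only \emph{delays} firing; it does not obscure the bit ``$n\in A$'' that a firing reveals, and the local behaviour of a finite column of size $>n$ is indistinguishable, through stage $n$, from that of an infinite column. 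The further claim that the aggregate firing content is $\leq_e \Delta(A)$ is unjustified and in fact too strong: $\Delta$ is an arbitrary fixed operator with $\Delta(A)<_e A$, bearing no relation to $\Gamma_e$, so there is no mechanism by which the firing pattern could be e-reducible to it.

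The paper's proof takes the opposite, \emph{active} route: it does not try to make $\Theta_e(A)$ small outright, but diagonalizes against every potential reduction. One still copies $\Delta$ into column $0$ and encodes $A\upharpoonright |\Gamma_{e,s}^{[t]}(A_s)|$ into column $t$, but in addition, for each $i$, whenever the length of agreement $l(\Gamma_i(\Theta_e(A_s)),A_s)$ grows one \emph{dumps} into columns $>i$ whatever the reduction $\Gamma_i$ uses there. The effect is that if $\Gamma_i(\Theta_e(A))=A$ then already $\Gamma_i(\Theta_e(A)^{[\leq i]}\cup\mathbb{N}^{[>i]})=A$; by induction, when $\Gamma_e^{[\leq i]}(A)$ is finite, $\Theta_e(A)^{[\leq i]}\leq_e\Delta(A)$, whence $A\leq_e\Delta(A)$, a contradiction. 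This dumping is exactly what adds enough noise to the higher columns to defeat the projection argument that breaks your construction. The non-c.e.\ hypothesis enters only via $\Delta(A)<_e A$, not through any ``spurious behaviour'' of the approximation.
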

\begin{proof}
   Let $X = \Gamma_e(A)$ and let $\{A_s\}_s$ be a good approximation to $A$. We shall build the enumeration operator $\Theta_e$ to meet the requirements:
\begin{align*}
   \mathcal R_i: \Gamma_i(\Theta_e(A)) \neq A \text{ iff } X^{[\leq i]} \;\text{is finite}.
\end{align*}
 
At stage $s=0$, let $\Theta_e$ contain the axioms $\angles{\angles{0}{x}}{F}$ for every axiom $\angles{x}{F} \in \Delta$.

At stage $s+1$, we have substages $t\leq s$. At substage $t$ we do the following:
\begin{enumerate}
    \item Let $l_{t,s} = l(\Gamma_{t,s}(\Theta_e(A_s)),A_s) = $ the greatest $n$ such that for every $m<n$ we have $\Gamma_{t,s}(\Theta_{e,s}(A_s))(m) = A_s(m)$. 
    
    For every $m < l_{t,s}$ with $m \in \Gamma_{t,s}(\Theta_{e,s}(A_s)^{[\leq t]}\cup \mathbb N^{[>t]})$, let $\langle m, D\rangle \in \Gamma_{t,s}$ be the first axiom witnessing this. For every $y \in D^{[> t]}$, add the axiom $\langle y, A_s\rangle$ into $\Theta_e$.
    \item Copy $A\uh n$ where $n = |X^{[t]}_s|$ into the $t^{th}$ column by enumerating axioms $\langle \langle t,x\rangle ,\{x\}\cup A_s\rangle$ into $\Theta_e$ for every $x \leq n$. 
\end{enumerate}
This ends the construction.
\begin{claim} If $X^{[\leq i]}$ is finite then $\mathcal R_i$ enumerates finitely many elements into $\Theta_e(A)$ and satisfies its requirement.
\end{claim}
\begin{proof}[Proof of Claim]
   Note that only actions taken at good stages $G_A$ in the approximation $\{A_s\}$ to $A$ will affect $\Theta_e(A)$. We will prove the claim by induction on $i$. Suppose the claim holds for $j < i$. Recall that $\lim_{s \in G_A} l_{i,s}\to \infty \iff \Gamma_i(\Theta_e(A)) = A$. 
   
   So suppose $\Gamma_i(\Theta_e(A))=A$ and $l_{i,s}$ is unbounded. Then  $\Gamma_i(\Theta_e(A)) = \Gamma_i(\Theta_e(A)^{[\leq i]} \cup \mathbb N^{[>i]})$ by the action taken in step $(\romannumeral 1)$. Since each $\mathcal R_j$ enumerates finitely many elements for $j<i$ and since $|X^{[\leq i]}|$ is finite, we have $\Theta_e(A)^{[\leq i]}\leq_e \Delta(A)$ and so $A \leq_e \Delta(A)$. This contradicts the hypothesis and so $\Gamma_i(\Theta_e(A)) \neq A$ and $\mathcal R_i$ only enumerates finitely many elements into $\Theta_e(A)$. 
\end{proof}
The claim above shows that if $X^{[i]}$ is finite for every $i$, then $\Delta(A) \leq_e \Theta_e(A) <_e A$. On the other hand if  $X^{[i]}$ is infinite for some $i$, then $\Theta(A)^{[i]} =^* A$ for the first such $i$ as the encoding in step $(\romannumeral 2)$ is successful. Therefore $\Theta_e(A) \geq_e A$. Note that, given an index for $\Delta$ and an index for the good approximation $\{A_s\}$, we can uniformly in $e$ find an index for $\Theta_e$ .
\end{proof}
\begin{lem}\label{lem:incompbelowskip}
    Let $X_A = \{e: \Gamma_e(A) <_e A\}$. Then if $A$ is non-c.e.\ $X_A \equiv_1 A^{\langle 3 \rangle}$. Fix an enumeration operator $\Delta$ and let $X_A^\Delta =\{e : \Delta(A) \leq_e \Gamma_e(A)<_e A\}$. If $\mathcal F \subseteq \omega$ is any set with $X_A^\Delta \subseteq \mathcal F \subseteq X_A$, then $\mathcal F \geq_1 A^{\langle 3 \rangle}$ whenever $\Delta(A) <_e A$.
\end{lem}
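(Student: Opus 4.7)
The plan is to prove the two $1$-reductions separately: for the ``moreover'' clause I will combine Lemma~\ref{lem:pi3complete} and Lemma~\ref{lem:incompleteset}, and for the remaining direction $X_A \leq_1 A^{\langle 3 \rangle}$ I will exploit that every $\Sigma^0_2$ set is cototal to directly encode $X_A$ inside $A^{\langle 3 \rangle}$ via a single enumeration operator.

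For $A^{\langle 3 \rangle} \leq_1 \mathcal F$, feed the computable $g$ from Lemma~\ref{lem:pi3complete} into Lemma~\ref{lem:incompleteset} applied with the given $\Delta$ to obtain operators $\Theta_{g(n)}$. Chaining the two lemmas, $n \in A^{\langle 3 \rangle}$ iff every column $\Gamma_{g(n)}^{[i]}(A)$ is finite iff $\Theta_{g(n)}(A) <_e A$; in that case the ``moreover'' clause of Lemma~\ref{lem:incompleteset} ensures $\Delta(A) \leq_e \Theta_{g(n)}(A)$, so the index of $\Theta_{g(n)}$ lies in $X_A^\Delta$. Otherwise $\Theta_{g(n)}(A) \equiv_e A$ and the index is outside $X_A$. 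Using the uniformity of the construction and padding, fix a computable injection $h$ with $\Gamma_{h(n)} = \Theta_{g(n)}$; then $n \in A^{\langle 3 \rangle}$ iff $h(n) \in X_A^\Delta \subseteq \mathcal F$, while $n \notin A^{\langle 3 \rangle}$ forces $h(n) \notin X_A \supseteq \mathcal F$, yielding $A^{\langle 3 \rangle} \leq_1 \mathcal F$. Taking $\Delta$ to enumerate $\emptyset$ (so $\Delta(A) = \emptyset <_e A$ since $A$ is non-c.e.) and $\mathcal F = X_A$ gives one half of $X_A \equiv_1 A^{\langle 3 \rangle}$.

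For the reverse direction $X_A \leq_1 A^{\langle 3 \rangle}$, I use that $A$ is cototal: since $A \leq_e A'$ and $A' \equiv_e A^\diamond$ for $A \leq_e 0'_e$, we have $A \leq_e A^\diamond$. Each of the four predicates ``$x \in A$'', ``$x \notin A$'', ``$x \in \Gamma_j(A)$'' and ``$x \notin \Gamma_j(A)$'' is then $\leq_e A^\diamond$ uniformly in $j$: the first from $A \leq_e A^\diamond$, the second from the trivial $\overline A \leq_1 A^\diamond$, the third by composing $A \leq_e A^\diamond$ with $\Gamma_j$, and the fourth via the direct $y \mapsto \langle j, y\rangle$ reduction into $A^\diamond = \overline{K_A}$. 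Since $\Gamma_j(A) \neq A$ unfolds to an existentially-quantified disjunction of conjunctions of those four predicates, Lemma~\ref{lem:andorproj} produces a single c.e.\ operator $\Gamma_m$ with $\Gamma_j(A) \neq A \iff j \in \Gamma_m(A^\diamond)$, equivalently $\Gamma_j(A) = A \iff \langle m, j\rangle \in A^{\langle 2 \rangle}$. Letting $c(i,e)$ be a standard composition index for $\Gamma_i \circ \Gamma_e$, this gives $e \notin X_A \iff \exists i\, [\langle m, c(i,e)\rangle \in A^{\langle 2 \rangle}]$. Now build a c.e.\ $\Gamma_k$ with axiom $\langle e,\{\langle m, c(i,e)\rangle\}\rangle$ for every pair $(i,e)$; then $e \in \Gamma_k(A^{\langle 2 \rangle}) \iff e \notin X_A$, and unfolding $A^{\langle 3 \rangle} = \overline{K_{A^{\langle 2 \rangle}}}$ yields $\langle k, e\rangle \in A^{\langle 3 \rangle} \iff e \notin \Gamma_k(A^{\langle 2 \rangle}) \iff e \in X_A$, so $e \mapsto \langle k, e\rangle$ is the required injection.

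The main technical obstacle is the reverse direction: one needs every factor of $\Gamma_j(A) \neq A$ to be $\leq_e A^\diamond$ so that the whole formula packages as a single c.e.\ operator on $A^\diamond$. Cototality of $\Sigma^0_2$ sets is exactly what makes the positive predicates ``$x \in A$'' and ``$x \in \Gamma_j(A)$'' available as $\Sigma_1^{A^\diamond}$-information; without it the analogous argument would only give $X_A \leq_1 A^{\langle 4 \rangle}$.
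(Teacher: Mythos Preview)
Your proof is correct and follows essentially the same approach as the paper's. Both directions proceed identically: the $A^{\langle 3\rangle}\leq_1 \mathcal F$ half combines Lemmas~\ref{lem:pi3complete} and~\ref{lem:incompleteset} exactly as you do, and the $X_A\leq_1 A^{\langle 3\rangle}$ half unfolds $\Gamma_e(A)<_e A$ as $\forall i\,\exists n\,(\Gamma_i(\Gamma_e(A))(n)\neq A(n))$ and counts quantifiers using $K_A\leq_e A^\diamond$, which is the same cototality fact you invoke. Your write-up is simply more explicit about the composition index $c(i,e)$, the final operator $\Gamma_k$, and the padding needed for injectivity (a point the paper glosses over).
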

\begin{proof}
    Observe that $\Gamma_e(A)<_e A$ if and only if $\forall i \exists n (\Gamma_i(\Gamma_e(A))(n) \neq A(n))$. We can expand $\Gamma_i(\Gamma_e(A))(n) \neq A(n)$ as: 
    \[
        (n \in A \text{ and } n \not \in \Gamma_i(\Gamma_e(A)) ) \text{ or } (n \not \in A \text{ and } n \in \Gamma_i(\Gamma_e(A))).
    \]
    Each of these terms is $\leq_1 K_A$ or $\leq_1 \overline {K_A}$.  Since $A$ is $\Sigma^0_2$, $A^\diamond \geq_e K_A$, therefore by Lemma~\ref{lem:andorproj}, $\exists n (\Gamma_i(\Gamma_e(A))(n) \neq A(n)) \leq_e A^\diamond$ while its negation is below $A^{\langle 2\rangle}$. 
    
    Applying Lemma~\ref{lem:andorproj} again, $\exists i \neg(\exists n (\Gamma_i(\Gamma_e(A))(n) \neq A(n))) \leq_e A^{\langle 2 \rangle}$, and so is $\leq_1 K_{A^{\langle 2 \rangle}}$. Therefore its negation is $\leq_1\overline{K_{A^{\langle 2 \rangle}}} = A^{\langle 3 \rangle}$ and so $X_A \leq_1 A^{\langle 3 \rangle}$.

    On the other hand, if $\Delta(A)<_e A$, then from Lemma \ref{lem:pi3complete} and Lemma \ref{lem:incompleteset} above, there is a computable function $f$ such that $\Delta(A) \leq_e \Gamma_{f(n)}(A) \leq_e A$ for every $n$ and 
    \begin{align*}
    n \in A^{\langle 3 \rangle} &\iff \Gamma_{f(n)}(A) <_e A,\\
    n \not \in A^{\langle 3 \rangle} &\iff \Gamma_{f(n)}(A) \equiv_e A.
    \end{align*}
    
    Therefore $n \in A^{\langle 3 \rangle} \iff f(n) \in \mathcal F$, so $A^{\langle 3 \rangle} \leq_1 \mathcal F$. 
\end{proof}
\begin{thm}\label{thm:main}
The following are equivalent: 
\begin{enumerate}
\item $A$ has an Ahmad sequence $\{\Gamma_{f(n)}(0'_e)\}_n$.
\item $ X_A = \{e : \Gamma_e(A)<_e A\} $ is $\Sigma^0_4$ (and hence $\Delta^0_4$).
\item$A$ is $\lowww$.
\end{enumerate}
\end{thm}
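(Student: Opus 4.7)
The plan is to prove the cycle $(1) \Rightarrow (2) \Rightarrow (3) \Rightarrow (1)$, with the last arrow being the substantive one.

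For $(1) \Rightarrow (2)$ I would do a direct complexity computation. Given the Ahmad sequence $\{\Gamma_{f(n)}(0'_e)\}_n$ and writing $\Gamma_e(A) = \Gamma_{h(e)}(0'_e)$ by composition (valid because $A \leq_e 0'_e$), we have $e \in X_A$ iff $\Gamma_{h(e)}(0'_e) = \Gamma_{f(n)}(0'_e)$ for some $n$. Equality of two $\Sigma^0_2$ sets is $\Pi^0_3$, so the outer $\exists n$ pushes this to $\Sigma^0_4$. The ``and hence $\Delta^0_4$'' clause follows from the observation that $X_A$ is always $\Pi^0_4$: the predicate $A \leq_e \Gamma_e(A)$ expands to $\exists i \,\forall n \,(\Gamma_i(\Gamma_e(A))(n) = A(n))$, which is $\Sigma^0_4$ by the computation already performed in Lemma~\ref{lem:incompbelowskip}.

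For $(2) \Rightarrow (3)$ I would invoke Lemma~\ref{lem:incompbelowskip} to conclude $X_A \equiv_1 A^{\langle 3 \rangle}$ (we may assume $A$ is non-c.e., as otherwise $\{Z : Z <_e A\}$ is empty and everything is trivial). Then $X_A \in \Sigma^0_4$ forces $A^{\langle 3 \rangle} \in \Sigma^0_4$, and hence $A^{\langle 3 \rangle} \leq_e 0^{\langle 3 \rangle}$ by Lemma~\ref{lem:poststheorem}. Since $A$ is $\Sigma^0_2$ (hence cototal) and iterated jumps are total, the jump and skip agree in e-degree at every level, so $A''' \equiv_e A^{\langle 3 \rangle} \leq_e 0^{\langle 3 \rangle} \equiv_e 0'''_e$, i.e., $A$ is $\lowww$.

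The substantive direction $(3) \Rightarrow (1)$ is handled as follows. Reversing the previous step, $A$ being $\lowww$ yields $X_A \in \Sigma^0_4$, so I can write $e \in X_A \iff \exists s\,\forall t\, Q(e, s, t)$ with $Q \in \Sigma^0_2$. For each pair $(e, s)$ I define
\[
\Phi_{e,s} := \{x : x \in \Gamma_e(A) \text{ and } \forall t \leq x\, Q(e, s, t)\}.
\]
The truncation clause $\forall t \leq x\, Q(e, s, t)$ is a finite conjunction of $\Sigma^0_2$ statements, hence $\Sigma^0_2$ uniformly in $(e, s, x)$; intersecting with $\Gamma_e(A)$ and applying Lemma~\ref{lem:andorproj} gives $\Phi_{e,s}$ a $\Sigma^0_2$ index computable in $(e, s)$. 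When $s$ is a genuine witness (so $\forall t\, Q(e, s, t)$) the extra clause is vacuous and $\Phi_{e,s} = \Gamma_e(A) <_e A$; when $s$ fails, letting $t^*$ be the least counterexample, $\Phi_{e,s} \subseteq [0, t^*)$ is finite, hence $<_e A$. Any $Z <_e A$ equals $\Gamma_e(A)$ for some $e \in X_A$, and selecting any witness $s$ gives $Z = \Phi_{e,s}$, so the uniform family $\{\Phi_{e,s}\}_{e,s}$ is exactly the required Ahmad sequence. The hard part is precisely this step: converting $\Sigma^0_4$-level information about $X_A$ into a $\Sigma^0_2$-uniform listing without overshooting into sets of the wrong e-degree. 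The truncation $\forall t \leq x$ is the crucial device, collapsing every incorrect guess of the $\exists s$-witness into a harmless finite set while preserving correct guesses verbatim.
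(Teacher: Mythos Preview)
Your proof is correct and follows essentially the same route as the paper. In particular, your truncation device $\Phi_{e,s} = \Gamma_e(A) \cap \{x : \forall t \le x\, Q(e,s,t)\}$ is exactly the paper's $\Gamma_{k(e)}(0'_e) \cap \Gamma_{h(e,n)}(0'_e)$, where $\Gamma_{h(e,n)}(0'_e) = \{m : \forall m' < m\, R(e,n,m')\}$; the only cosmetic difference is that you invoke the $\Sigma^0_4$ normal form directly while the paper routes through the analysis of Lemma~\ref{lem:pi3complete} to obtain the predicate $R \le_e 0'_e$.
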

\begin{proof}
\begin{case}{ $(\romannumeral 1) \implies (\romannumeral 2)$}
    The natural definition of $X_A$ in Lemma~\ref{lem:incompbelowskip} is always $\Pi^0_4$ since $X_A \leq_1 A^{\langle 3 \rangle} \leq_1 0^{\langle 4\rangle}$. We use the Ahmad sequence to get a $\Sigma^0_4$ definition for $X_A$. 
    Note that $e \in X_A$ precisely if $\exists n \forall m (\Gamma_{f(n)}(0'_e)(m) = \Gamma_e(A)(m))$. 
    
    Let $C = 0'_e$. The terms $m \in \Gamma_{f(n)}(0'_e)$ and $m \not \in \Gamma_{f(n)}(0'_e)$ are $\leq_1 C'$ while the terms $m \in \Gamma_{e}(A)$ and $m \not \in \Gamma_{e}(A)$ are $\leq_1 A'$. Since $C$ is $\Sigma^0_2$, $A' \leq_e C' \leq_e C^\diamond$.
    
    Therefore by Lemma~\ref{lem:andorproj}, checking if $\exists m (\Gamma_{f(n)}(0'_e)(m) \neq \Gamma_e(A)(m))$ is enumeration below $C^\diamond$ while its negation is $\leq_1 \overline{K_{C^\diamond}} = C^{\langle 2 \rangle}$. Finally, using Lemma~\ref{lem:andorproj} again, $\exists n \neg \exists m (\Gamma_{f(n)}(0'_e)(m) \neq \Gamma_e(A)(m))$ is $\leq_e C^{\langle 2\rangle} \leq_e 0^{\langle 3 \rangle}_e$. Therefore, by Lemma~\ref{lem:poststheorem}, $X_A$ is $\Sigma^0_4$.

\end{case}

\begin{case}{$(\romannumeral 2) \implies (\romannumeral 3)$}
    By Lemma~\ref{lem:incompbelowskip}, $A^{\langle 3 \rangle} \equiv_1 X_A$. Therefore since $X_A$ is $\Sigma^0_4$, so is $A^{\langle 3 \rangle}$. Then by Lemma~\ref{lem:poststheorem}, $A^{\langle 3 \rangle} \leq_e 0^{\langle 3 \rangle}$. Since $A$ is $\Sigma^0_2$, $A''' \equiv_e A^{\langle 3 \rangle}$, and so $A$ is $\lowww$.
\end{case}
\begin{case}{$(\romannumeral 3) \implies (\romannumeral 1)$}
      Let $C = 0'_e$. Since $A^{\langle 3 \rangle} \leq_e 0^{\langle 3 \rangle}$, we know $A^{\langle 3 \rangle} \equiv_1 X_A \leq_e C^{\langle 2 \rangle}$. Then $X_A \leq_1 K_{C^{\langle 2 \rangle}}$ and $\overline {X_A} \leq_1 \overline {K_{C^{\langle 2 \rangle}}} = C^{\langle 3 \rangle}$. Then, from the analysis in Lemma~\ref{lem:pi3complete}, there is an $R\leq_e C$ such that
     \[
     e \in X_A \iff \exists n \forall m R(e,n,m).
     \]
     
    As in Lemma \ref{lem:pi3complete}, let $h(e,n)$ be a computable function defined by 
    \[ m \in \Gamma_{h(e,n)}(C) \iff \forall m'<m \;R(e,n,m').\]
    
    By the parameter theorem, since $C \geq_e A$, there is a computable function $k$ such that $\Gamma_{k(e)}(C) = \Gamma_e(A)$. Then the sequence $\{ \Gamma_{k(e)}(C) \cap \Gamma_{h(e,n)}(C)\}_{e,n\in \omega}$ is an Ahmad sequence for $A$: $e\in X_A$ iff $\Gamma_{k(e)}(C) \cap \Gamma_{h(e,n)}(C) = \Gamma_e(A)$ for some $n$ and finite otherwise, while $e \not \in X_A$ iff $\Gamma_{k(e)}(C) \cap \Gamma_{h(e,n)}(C)$ is finite for every $n$. \qedhere
\end{case}
\end{proof}

\section{\texorpdfstring{Join Irreducibility and Building Ahmad $n$-pairs}{Join Irreducibility and Building Ahmad pairs}}

Suppose $(A,B)$ is an Ahmad pair. Then $A$ has an Ahmad sequence and is join irreducible. In this section, we will show that the converse holds as well. The following lemma helps us do this by letting us code an ideal with a uniform enumeration while avoiding being enumeration above A. 

\begin{lem}\label{lem:boundIncomp}
Let $f$ be computable and let $\mathcal F = \{\Gamma_{f(n)}(0'_e)\}_n$ be an ideal such that $A \not \in \mathcal F$. Then there is a set $B\leq_e 0'_e$, such that $A\not \leq_e B$ and $\forall X \in \mathcal F (X \leq_e B)$. 
\end{lem}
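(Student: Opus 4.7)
The plan is a $0'$-effective priority construction of $B\leq_e 0'_e$ satisfying the positive requirements $\mathcal P_n\colon X_n\leq_e B$, where $X_n := \Gamma_{f(n)}(0'_e)$, and the negative requirements $\mathcal N_e\colon \Gamma_e(B)\neq A$. For each $\mathcal P_n$ I would reserve a column $c_n$ of $B$ and arrange $B^{[c_n]} = X_n$, so $X_n\leq_e B$ via the obvious reduction; the injective assignment $n\mapsto c_n$ is $0'$-computable but chosen dynamically during the construction rather than fixed as $c_n = n$, and this flexibility is what will make the negative requirements work. Closure of $\mathcal F$ under finite joins gives $Y_n := X_0\oplus\cdots\oplus X_n\in\mathcal F$, and since $A\not\in\mathcal F$ we have $A\not\leq_e Y_n$, so $\Gamma_e(Y_n)\neq A$ for every $e$ and $n$; this is the engine driving every $\mathcal N_e$ strategy.

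For each $\mathcal N_e$ I split into two cases. In the \emph{easy case} there is some $n$ and some $x\in\Gamma_e(Y_n)\setminus A$, witnessed by an axiom $\langle x,D\rangle\in\Gamma_e$ with $D\subseteq Y_n$; once $\mathcal P_0,\dots,\mathcal P_n$ have coded $Y_n$ into $B$, we automatically get $x\in\Gamma_e(B)\setminus A$ and $\mathcal N_e$ is met. In the \emph{hard case} $\Gamma_e(Y_n)\subsetneq A$ for every $n$; I would pick a witness $x_e\in A$ and impose the restraint that $D\not\subseteq B$ for every axiom $\langle x_e,D\rangle\in\Gamma_e$, which I enforce by steering the columns $c_m$ assigned to later $\mathcal P_m$ away from a suitable $0'$-computable transversal of the c.e.\ family of such (finite) sets $D$. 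Since every $D$ is finite and the axioms of $\Gamma_e$ for a fixed $x_e$ form a c.e.\ family, such a transversal exists and is $0'$-computable.

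\textbf{Main obstacle.} The genuine difficulty is the subcase of the hard case where $\bigcup_n\Gamma_e(Y_n)=A$, i.e.\ $\Gamma_e$ applied to the e-degree supremum of $\mathcal F$ already recovers $A$: under the naive uniform coding $c_n=n$ this would give $\Gamma_e(B) = A$ and the requirement would fail. This is precisely why $n\mapsto c_n$ must be non-uniform. With the assignment chosen so that every axiom $\langle x_e,D\rangle\in\Gamma_e$ references at least one column not in the range of $c$, $B$ does not enumeration-compute $\bigoplus_n X_n$ and the axioms of $\Gamma_e$ cannot reassemble all of $A$ inside $\Gamma_e(B)$. Coordinating the finitely many column-exclusions imposed by higher-priority $\mathcal N_e$'s with the dynamic assignment of $c_m$'s for lower-priority $\mathcal P_m$'s fits into a finite-injury priority ordering $\mathcal N_0<\mathcal P_0<\mathcal N_1<\mathcal P_1<\cdots$, so each $\mathcal P_n$ is injured at most finitely often and each $c_n$ stabilizes. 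Effectivity of the whole construction in $0'$ gives $B\leq_e 0'_e$, and the usual verification at the end confirms that $X_n\leq_e B$ for every $n$ and $\Gamma_e(B)\neq A$ for every $e$.
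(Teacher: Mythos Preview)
Your easy case conflates $\leq_e$ with $\subseteq$: from $D\subseteq Y_n$ and $Y_n\leq_e B$ you cannot conclude $D\subseteq B$, and since you explicitly arrange $c_n\neq n$, in fact $Y_n\not\subseteq B$. This is patchable (work instead with the portion of $B$ already built by higher-priority $\mathcal P$'s, which is a genuine subset of $B$ and still has degree in $\mathcal F$), but the hard case has a real gap. Nothing bounds how many columns your restraint forbids. Concretely, $\Gamma_e$ may contain axioms $\langle x_e,\{\langle k,y_k\rangle\}\rangle$ for every $k$; then each $D$ is a singleton, the transversal element is forced, and keeping $x_e\notin\Gamma_e(B)$ demands that every column stay empty, leaving nowhere to place any $X_m$. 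You give no mechanism for choosing $x_e$ (or the transversal) so that only coinfinitely many columns are excluded, and there is no reason such a choice exists; the finite-injury picture therefore collapses.

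The paper's argument avoids restraint entirely and is in a sense dual to yours. It fixes column $e$ to hold $\Gamma_{f(e)}(0'_e)$ and, for $\mathcal N_e$, \emph{dumps} freely into columns $>e$: whenever the length of agreement between $A$ and $\Gamma_e(B)$ grows, it enumerates into $B$ the high-column part of the witnessing $\Gamma_e$-axioms. If $\Gamma_e(B)=A$, this dumping forces $\Gamma_e(B)=\Gamma_e(B^{[\le e]}\cup\mathbb N^{[>e]})$, whence $A\leq_e B^{[\le e]}\equiv_e\bigoplus_{i\le e}\Gamma_{f(i)}(0'_e)\in\mathcal F$, a contradiction. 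So the length of agreement stays bounded, each $\mathcal N_e$ dumps only finitely many elements, and $B^{[e]}=^*\Gamma_{f(e)}(0'_e)$. The key point you are missing is that the negative requirements \emph{add} to $B$ rather than restrain it, so they never obstruct the coding.
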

\begin{proof}
Let $\{A_s\}_s$ and $\{B_{n,s}\}_s$ be correct approximations to $A$ and $\Gamma_{f(n)}(A)$ respectively, with respect to a good approximation $\{K_s\}$ to $0'_e$. We will build an enumeration operator $\Theta$ so that $B = \Theta(0'_e)$ will meet the requirements: 
\begin{align*}
  \mathcal{N}_e&: A \neq \Gamma_e(B), \\
  \mathcal{P}_e&: \Gamma_{f(e)}(0'_e) =^* B^{[e]}.
\end{align*}
At stage $s=0$, let $\Theta = \emptyset$.\\
At stage $s+1$:
\begin{enumerate}
\item 
For $e\leq s$ let  $l_{e,s} = l(A_s, \Gamma_{e,s}(B_s))$. Then for every $x < l_{e,s}$, if $x \in \Gamma_{e,s}(B_s^{[\leq e]} \cup \N^{[>e]})$, pick the least axiom $\langle x, D \rangle \in \Gamma_e$ which witnesses this. Now for every $y \in D^{[> e]}$ enumerate the axiom $\langle y, K_s \rangle$ into $\Theta$. 
\item
For $e\leq s$, and for every axiom $\langle x, D \rangle \in \Gamma_{f(e),s+1}$, enumerate $\langle \langle e,x\rangle, D\rangle$ into $\Theta$.
\end{enumerate}
\textbf{Verification}: Let $G= \{s: K_s \subseteq 0'_e\}$ be the set of good stages. Recall that $A = \Gamma_e(B) \iff \lim_{s\in G} l_{e,s}= \infty$. Observe that by step $(\romannumeral 2)$, we have ensured that $\Gamma_{f(e)}(0'_e) \subseteq B^{[e]}$.
\begin{claim} Every $\mathcal N_e$ strategy satisfies its requirement and only enumerates finitely many elements into $B$.
\end{claim}
\begin{proof}[Proof of Claim]
    We prove this by induction on $e$. Suppose $\mathcal N_i$ enumerates finitely many elements into $B$ for every $i<e$. Observe that $\Gamma_e(B)\subseteq \Gamma_e(B^{[\leq e]} \cup \mathbb N^{[>e]})$. Suppose $\Gamma_e(B) = A$ and so $\{l_{e,s}\}_{s\in G}$ is unbounded. Now if $x \in \Gamma_e(B^{[\leq e]} \cup \mathbb N^{[>e]})$ witnessed by $\langle x, D \rangle \in \Gamma_e$, then $\mathcal N_e$ will dump $D^{[>e]}$ into $B$ and so $x \in \Gamma_e(B)$. So $\Gamma_e(B) = A$ implies $\Gamma_e(B) = \Gamma_e(B^{[\leq e]} \cup \N^{[>e]})$.

    Now for $i \leq e$, $\mathcal P_i$ and $\mathcal N_{j}$ for $j<i$ are the only strategies that enumerates elements into $B^{[i]}$ and $\mathcal P_i$ is successful in coping in $\Gamma_{f(e)}(0'_e)$ into this column up to finite difference by the induction hypothesis.
    
    Now $\{l_{e,s}\}_{s\in G}$ is unbounded $\iff$ $\Gamma_e(B^{[\leq e]} \cup \mathbb N^{[>e]}) = \Gamma_e(B) = A \iff$  $A \leq_e \bigoplus_{i\leq e} B^{[i]} \equiv_e \bigoplus_{i \leq e} \Gamma_{f(i)}(0'_e)$. But since $\mathcal F$ is an ideal and $A \not \in \mathcal F$, this is not possible and so $\{l_{e,s}\}_{s\in G}$ must be bounded. Therefore $\mathcal N_e$ is met and it only enumerates finitely many elements into $B$.
\end{proof}
Since all the $\mathcal N$ requirements enumerate finitely many elements into any column, the $\mathcal P_e$ requirements are successful in ensuring that $B^{[e]} =^* \Gamma_{f(e)}(0'_e)$. \qedhere
\end{proof}
\newpage
\begin{cor}\label{cor:pairiff}
  $A$ is join irreducible and $\lowww$ if and only if there is a set $B$ such that $(A,B)$ form an Ahmad pair.
\end{cor}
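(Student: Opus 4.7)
The plan is to prove both directions directly from the machinery already assembled in Sections 3 and 4, with the nontrivial content concentrated in the backward direction.

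For the forward direction, assume $(A,B)$ is an Ahmad pair. Theorem~\ref{thm:PairSeq} gives $A$ an Ahmad sequence, and by the equivalence of (i) and (iii) in Theorem~\ref{thm:main}, $A$ is $\lowww$. Join irreducibility then follows immediately from the Ahmad pair property: if $A \equiv_e X \oplus Y$ with $X, Y <_e A$, then by assumption $X, Y \leq_e B$, so $A \leq_e X \oplus Y \leq_e B$, contradicting $A \not\leq_e B$.

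For the backward direction, assume $A$ is $\lowww$ and join irreducible. By Theorem~\ref{thm:main}, $A$ has an Ahmad sequence $\{\Gamma_{f(n)}(0'_e)\}_n$ listing exactly the $\Sigma^0_2$ sets strictly below $A$. The goal is to feed this uniform family into Lemma~\ref{lem:boundIncomp}. To do so I must verify that $\mathcal F := \{\Gamma_{f(n)}(0'_e)\}_n$ is an ideal with $A \notin \mathcal F$. That $A \notin \mathcal F$ is immediate since every member of $\mathcal F$ is strictly $e$-below $A$. Downward closure is automatic as the family consists of all strictly smaller $\Sigma^0_2$ sets. Closure under join is exactly the place where join irreducibility is used: if $X, Y <_e A$, then $X \oplus Y <_e A$ (otherwise $A \equiv_e X \oplus Y$ would realize $A$ as the join of two strictly smaller degrees).

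Once these properties are checked, Lemma~\ref{lem:boundIncomp} produces a $\Sigma^0_2$ set $B$ with $A \not\leq_e B$ and $X \leq_e B$ for every $X \in \mathcal F$, which is precisely the definition of $(A,B)$ being an Ahmad pair. There is no real obstacle here, since all the work has been done in Theorem~\ref{thm:main} and Lemma~\ref{lem:boundIncomp}; the only conceptual point worth emphasizing is that join irreducibility is exactly the hypothesis needed to upgrade the set of strictly smaller degrees into an ideal, and $\lowww$-ness is exactly what makes this ideal uniformly enumerable.
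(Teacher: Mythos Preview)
Your proof is correct and follows essentially the same approach as the paper: invoke Theorem~\ref{thm:PairSeq} and Theorem~\ref{thm:main} to get $\lowww$ from the Ahmad pair, derive join irreducibility directly from $A \not\leq_e B$, and in the other direction use Theorem~\ref{thm:main} to obtain an Ahmad sequence, observe that join irreducibility makes it an ideal, and apply Lemma~\ref{lem:boundIncomp}. One small quibble: your labels ``forward'' and ``backward'' are swapped relative to the statement as written (the forward direction starts from ``$A$ is join irreducible and $\lowww$''), though the mathematics is unaffected.
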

\begin{proof}
  As discussed above, if $(A,B)$ is an Ahmad pair, them by Theorem~\ref{thm:PairSeq} $A$ has an Ahmad sequence and so by Theorem~\ref{thm:main} it is $\lowww$. If $X,Y <_e A$, then $X, Y \leq_e B$ and so $X\oplus Y \leq_e B$ i.e.\ $X\oplus Y <_e A$. Therefore $A$ is join irreducible.
  
  For the forward direction, since $A$ is $\lowww$, by Theorem~\ref{thm:main} it has an Ahmad sequence $\mathcal F = \{ \Gamma_{f(n)}(0'_e)\}$ for some computable $f$. Since $A$ is join irreducible, $\mathcal F$ is closed under join and is therefore an ideal. By Lemma~\ref{lem:boundIncomp}, there is a $B \leq_e 0'_e$, such that $(A,B)$ is an Ahmad pair.
\end{proof}
We can now define $\{A\leq_e 0'_e : A \text{ is } \lowww\}$ using a $\Pi_3$ sentences in  the language $\{\leq_e\}$ in the local structure.
\begin{thm}\label{thm:lowdefn}
$A$ is $\lowww$ if and only if $\forall Z \leq_e A(\exists Y <_e Z \implies \exists Y<_e Z, \exists B \not \geq_e Y  (\forall X <_e Y(X\leq_e B)))$.
\end{thm}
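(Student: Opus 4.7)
The plan is to translate the right-hand side via Corollary~\ref{cor:pairiff}: since a set $Y$ is the left half of an Ahmad pair iff $Y$ is $\lowww$ and join irreducible, the formula asserts precisely that every non-zero $Z \leq_e A$ bounds a $\lowww$ join irreducible degree strictly below it.

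For the $(\Rightarrow)$ direction, I would assume $A$ is $\lowww$ and take any $Z \leq_e A$ with $\exists Y <_e Z$, i.e.\ $Z >_e 0_e$. Since $\lowww$ is a downward-closed jump class, $Z$ is $\lowww$, and consequently every $Y \leq_e Z$ is automatically $\lowww$. So it suffices to exhibit a join irreducible $Y$ strictly below $Z$. I would invoke the standard density-type fact that every non-zero $\Sigma^0_2$ enumeration degree bounds a join irreducible degree, which can be obtained by an Ahmad-style construction carried out inside the ideal of sets $\leq_e Z$; combined with the downward-closure observation, this supplies a $\lowww$ join irreducible $Y <_e Z$, and Corollary~\ref{cor:pairiff} produces the desired $B$.

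For the $(\Leftarrow)$ direction I would argue by contraposition: assuming $A$ is not $\lowww$, I would produce a non-zero $Z \leq_e A$ bounding no left half of an Ahmad pair strictly below. The naive choice $Z = A$ need not work, since a non-$\lowww$ degree can still bound $\lowww$ join irreducibles. Instead, I would use the failure of $\lowww$-ness---by Theorem~\ref{thm:main}, $A$ has no Ahmad sequence, so the ideal $\{X : X <_e A\}$ is not uniformly $\Sigma^0_2$-enumerable---to construct $Z$ via an adaptation of Lemma~\ref{lem:boundIncomp}. The key point is that any candidate $\lowww$ join irreducible $Y <_e Z$ would, together with the Ahmad sequence furnished for $Y$ by Theorem~\ref{thm:PairSeq} and the uniform closure tools of Lemma~\ref{lem:uniformEnum}, allow one to assemble a uniform enumeration of the sets $<_e A$, contradicting the hypothesis that $A$ fails to be $\lowww$.

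The main obstacle will be the $(\Leftarrow)$ direction. The existential hypothesis RHS supplies only a non-uniformly chosen Ahmad left half below each $Z$, so the construction of a witnessing $Z$ must diagonalize against every potential $\lowww$ join irreducible in the ideal below $A$, not merely a fixed one. Controlling this diagonalization while still keeping $Z \leq_e A$ and non-zero is the delicate step, and is where the machinery of good and correct approximations together with the $X_A \equiv_1 A^{\langle 3 \rangle}$ reduction from Lemma~\ref{lem:incompbelowskip} should carry the argument through.
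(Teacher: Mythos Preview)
Your $(\Rightarrow)$ direction matches the paper's argument: invoke Kent--Sorbi \cite{kentBounding} for downward density of join irreducibles, use downward closure of $\lowww$, then apply Corollary~\ref{cor:pairiff}.

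Your $(\Leftarrow)$ direction, however, has a real gap. You propose to derive a contradiction by showing that a $\lowww$ join irreducible $Y <_e Z$ would let you assemble an Ahmad sequence for $A$. But an Ahmad sequence for $Y$ only enumerates $\{X : X <_e Y\}$, which is a proper sub-ideal of $\{X : X <_e A\}$; there is no mechanism in Lemma~\ref{lem:uniformEnum} or anywhere else in the paper that lets you climb from a uniform enumeration of the sets below $Y$ to one of the sets below $A$. Your final paragraph acknowledges the difficulty but offers no actual construction of the diagonalizing $Z$, and the tools you cite (Lemma~\ref{lem:incompbelowskip}, good approximations) do not obviously yield one.

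The paper sidesteps this entirely with an external result you did not mention: Ganchev and Sorbi \cite{initSeg} show that below any $A$ there is a non-c.e.\ $D \leq_e A$ such that \emph{every} non-zero $X \leq_e D$ satisfies $X' \equiv_e A'$. Taking $Z = D$, every candidate $Y$ with $0_e <_e Y \leq_e Z$ inherits $Y' \equiv_e A'$, hence is not $\lowww$, hence by Corollary~\ref{cor:pairiff} cannot be a left half. No diagonalization is needed; the jump-inversion flavour of the initial-segment result does all the work. This is the missing idea.
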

\begin{proof}
Kent and Sorbi \cite{kentBounding} show that for every $\Sigma^0_2$ set $X>_e 0_e$, there is a non-c.e.\ $Y \leq_e X$ such that $Y$ is join irreducible. Therefore for every $A$ and $Z\leq_e A$, if $Z$ is non-c.e. then there is a non-c.e.\ $Y \leq_e Z$ such that $Y$ is join irreducible. 

If $A$ is $\lowww$ then $Y$ is $\lowww$ as well. Therefore by Corollary~\ref{cor:pairiff}, there is a $\Sigma^0_2$ set $B$ such that $(Y,B)$ is an Ahmad pair.

On the other hand suppose $A$ is not $\lowww$. Ganchev and Sorbi \cite{initSeg} argue that for every set $C$, there is an initial segment $\mathcal I = \{X: 0<_e X \leq_e D\}$ for some non-c.e.\ $D \leq_e C$, such that for every $X \in \mathcal I$, $X' = C'$. Let $\mathcal I_A$ be such an initial segment below $A$ given by $\mathcal I_A = \{X: 0<_e X \leq_e Z\}$. Then every $Y \in \mathcal I_A$ is not $\lowww$ and so cannot be the left half of an Ahmad pair.
\end{proof}
We extend the characterization in Corollary~\ref{cor:pairiff} to Ahmad $n$-pairs. 
\begin{defn}\label{def:njoinirr}
A set $A$ is \emph{$n$-join irreducible} if $\forall A_0,\dots,A_n <_e A$ there is a pair $i\neq j$ with $A_i \oplus A_j <_e A$
\end{defn}

Note that $1$-join irreducible is the same as join irreducible.
\begin{lem} \label{lem: idealcover}
$A$ is $n$-join irreducible if and only if $\{Z: Z<_e A\}$ is the union of $n$ ideals.
\end{lem}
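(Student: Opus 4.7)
The plan is to establish both directions of the equivalence, with the forward implication a direct pigeonhole and the reverse direction proceeding by induction on $n$.

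For the forward direction, suppose $\{Z : Z <_e A\} = \bigcup_{i<n} \mathcal{I}_i$. Given any $n+1$ sets $A_0,\dots,A_n <_e A$, pigeonhole provides indices $i \neq j$ with $A_i, A_j$ lying in a common ideal $\mathcal{I}_k$; closure under joins gives $A_i \oplus A_j \in \mathcal{I}_k$, so $A_i \oplus A_j <_e A$. Hence $A$ is $n$-join irreducible.

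For the reverse direction, induct on $n$. The base case $n=1$ amounts to the observation that $\{Z : Z <_e A\}$ is always downward closed, and is closed under joins precisely when $A$ is join irreducible; so it is itself an ideal. For $n \geq 2$, if $A$ happens to be $(n-1)$-join irreducible, the induction hypothesis gives a cover by $n-1$ ideals (pad to exactly $n$ by repetition). Otherwise, the failure of $(n-1)$-join irreducibility yields sets $A_0,\dots,A_{n-1} <_e A$ which are pairwise enemies: $A_i \oplus A_j \equiv_e A$ for every $i \neq j$. The key construction is then to set $\mathcal{I}_i := \{Z : Z \oplus A_i <_e A\}$ for each $i < n$.

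Each $\mathcal{I}_i$ is clearly downward closed. For join closure, fix $Z, W \in \mathcal{I}_i$ and apply $n$-join irreducibility to the $n+1$ sets $\{Z \oplus A_i,\ W \oplus A_i\} \cup \{A_j : j \neq i\}$. Every pair other than $(Z \oplus A_i, W \oplus A_i)$ has join $\equiv_e A$: joins $A_j \oplus A_k$ with distinct $j,k \neq i$ are $\equiv_e A$ by the pairwise enemy condition, and each cross pair $(Z \oplus A_i) \oplus A_j \equiv_e Z \oplus (A_i \oplus A_j) \equiv_e Z \oplus A \equiv_e A$ since $Z \leq_e A$ (and similarly with $W$ in place of $Z$). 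Thus $n$-join irreducibility forces $(Z \oplus A_i) \oplus (W \oplus A_i) \equiv_e Z \oplus W \oplus A_i <_e A$, i.e.\ $Z \oplus W \in \mathcal{I}_i$. For covering, given any $Z <_e A$, apply $n$-join irreducibility to $\{Z, A_0,\dots,A_{n-1}\}$: the pairs among the $A_j$'s are all $\equiv_e A$, so some $Z \oplus A_i <_e A$, placing $Z$ in $\mathcal{I}_i$. The only conceptually nontrivial step is finding the right family of ideals --- the ``friends of $A_i$'' definition is the natural choice because the pairwise enemy structure rules out all but one pair in every relevant instance of $n$-join irreducibility, pinning down exactly the join one needs to place below $A$.
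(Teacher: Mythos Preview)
Your proof is correct and follows essentially the same approach as the paper's: pigeonhole for the $(\impliedby)$ direction, and induction on $n$ for $(\implies)$ using the ideals $\mathcal{I}_i = \{Z : Z \oplus A_i <_e A\}$ built from witnesses to $(n-1)$-join reducibility. (Minor quibble: you have the labels ``forward'' and ``reverse'' swapped relative to the statement as written---the pigeonhole argument is the $(\impliedby)$ direction.)
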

\begin{proof}
\begin{case}{$(\impliedby)$} Given any $n+1$ degrees strictly below $A$, by the pigeon hole principle, two of them must lie in the same ideal. Then the join of these two degrees is also in that ideal, and so is $<_e A$. Therefore $A$ is $n$-join irreducible.
\end{case}
\begin{case}{$(\implies)$}We prove the claim by induction on $n$. For $n=1$, since join $1$-irreducible is the same as join irreducible, $\{Z: Z<_e A\}$ is an ideal and so covers itself.

Now suppose the claim holds for $n-1 \geq 1$ and let $A$ be $n$-join irreducible. If $A$ is $(n-1)$-join  irreducible, then by the induction hypothesis it has a cover by $n-1$ ideals and we can extend this trivially with the ideal $\{0_e\}$ and we are done. 

So suppose $A$ is $(n-1)$-join reducible and let $X_1,\dots,X_n <_e A $ be such that $X_i \oplus X_j \equiv_e A$ for $i\neq j$.  Let $\mathcal F_i = \{Y: Y\oplus X_i <_e A\}$. We will show that $\{Z : Z <_e A\} = \bigcup_i \mathcal F_i$.

If $Z<_e A$, then $Z$, $X_1, \dots , X_n$ are $n+1$ sets strictly below $A$ and $X_i\oplus X_j \equiv_e A$ for $i\neq j$. Therefore since $A$ is $n$-join irreducible, $Z\oplus X_i <_e A$ for some $i$.

If $Y,Z \in \mathcal F_i$ then consider the $n+1$ sets $Y\oplus X_i, Z\oplus X_i$ along with $X_j$ for $j\neq i$. The only pair here whose join could be strictly below $A$ would be $Y \oplus Z \oplus X_i$. Therefore $Y\oplus Z \in \mathcal F_i$, i.e.\ $\mathcal F_i$ is an ideal. \qedhere
\end{case} 
\end{proof}

\begin{defn}\label{def:properpair}
For an Ahmad $n$-pair $(A,B_1,\dots,B_n)$, we say $B_i$ is \emph{essential} if there is a $C<_e A, B_i$ such that $C \not \leq_e B_j$ for $j\neq i$. We say that an Ahmad $n$-pair $(A,B_1,\dots,B_n)$ is \emph{proper} if every $B_i$ is essential.
\end{defn}
Note that if $B_i$ is essential, we cannot get rid of it to get an Ahmad $(n-1)$-pair with the remaining $B_j$'s for $j\neq i$. A proper Ahmad $n$-pair has no redundant $B_i$'s. Similarly we can define essential for the ideals below $A$.
\begin{defn}
If $\{Z : Z<_e A\} = \bigcup_1^n \mathcal F_i$, where each $\mathcal F_i$ is an ideal, then $\mathcal F_i$ is \emph{essential} if $\mathcal F_i \not \subseteq \mathcal F_j$ for any $j\neq i$. Such a covering is \emph{proper} if each $\mathcal F_i$ is essential.
\end{defn}
\begin{thm}\label{thm:charirr}
The following are equivalent for any set $A\subseteq \omega$:
\begin{enumerate}
\item $A$ has a unique proper covering by $n$ ideals.
\item $A$ has a proper covering by $n$ ideals.
\item $A$ is $n$-join irreducible and $(n-1)$-join reducible.
\end{enumerate} 
\end{thm}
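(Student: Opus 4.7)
My plan is to prove the three equivalences via $(1) \Rightarrow (2)$ (trivial), $(3) \Rightarrow (2)$, $(2) \Rightarrow (3)$, and $(2) \Rightarrow (1)$. The central technical tool will be a \emph{primality lemma} for ideals of enumeration degrees: if an ideal $\mathcal{I}$ satisfies $\mathcal{I} = \mathcal{I}_1 \cup \cdots \cup \mathcal{I}_n$ with each $\mathcal{I}_k$ an ideal, then $\mathcal{I} = \mathcal{I}_k$ for some $k$. This will follow by pigeonhole: if every inclusion $\mathcal{I}_k \subseteq \mathcal{I}$ were strict, pick $X_k \in \mathcal{I} \setminus \mathcal{I}_k$, form $Y = \bigoplus_k X_k \in \mathcal{I}$, and observe that $Y$ must lie in some $\mathcal{I}_j$ by hypothesis, forcing $X_j \leq_e Y \in \mathcal{I}_j$ and so $X_j \in \mathcal{I}_j$, a contradiction.

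For $(3) \Rightarrow (2)$, I would invoke the construction already appearing inside the proof of Lemma~\ref{lem: idealcover}: given $X_1, \dots, X_n <_e A$ witnessing $(n-1)$-join reducibility with pairwise joins $\equiv_e A$, the sets $\mathcal{F}_i := \{Y : Y \oplus X_i <_e A\}$ form a cover of $\{Z : Z <_e A\}$ by $n$ ideals. Properness is immediate because $X_i \in \mathcal{F}_i$ while $X_i \notin \mathcal{F}_j$ for $j \neq i$ (since $X_i \oplus X_j \equiv_e A$), so $\mathcal{F}_i \not\subseteq \mathcal{F}_j$. For $(2) \Rightarrow (3)$, essentialness provides, for each $i$ and each $j \neq i$, a witness $X_{i,j} \in \mathcal{F}_i \setminus \mathcal{F}_j$; their join $X_i := \bigoplus_{j \neq i} X_{i,j}$ lies in $\mathcal{F}_i \setminus \bigcup_{j \neq i} \mathcal{F}_j$ by downward closure. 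For any $i \neq j$, if $X_i \oplus X_j <_e A$ it would lie in some $\mathcal{F}_k$, but downward closure rules out both $k = i$ and $k = j$, contradicting the cover; hence $X_i \oplus X_j \equiv_e A$, witnessing $(n-1)$-join reducibility, while $n$-join irreducibility comes directly from Lemma~\ref{lem: idealcover}.

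Uniqueness $(2) \Rightarrow (1)$ is where primality does its real work. Given a second proper cover $\mathcal{G}_1, \dots, \mathcal{G}_n$, I would write $\mathcal{F}_i = \bigcup_j (\mathcal{F}_i \cap \mathcal{G}_j)$ as a union of ideals and use primality to obtain an index $\phi(i)$ with $\mathcal{F}_i \subseteq \mathcal{G}_{\phi(i)}$; symmetrically some $\psi(j)$ satisfies $\mathcal{G}_j \subseteq \mathcal{F}_{\psi(j)}$. Composing gives $\mathcal{F}_i \subseteq \mathcal{F}_{\psi(\phi(i))}$, and properness of the $\mathcal{F}$-covering forces $\psi \circ \phi = \mathrm{id}$, so $\phi$ is a bijection and $\mathcal{F}_i = \mathcal{G}_{\phi(i)}$ throughout. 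I expect the main subtlety to be stating and invoking primality cleanly, and confirming that essentialness is strong enough to extract witnesses $X_i$ outside every other $\mathcal{F}_j$ in $(2) \Rightarrow (3)$; once those are in place the remaining steps are short combinatorial manipulations.
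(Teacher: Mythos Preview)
Your proof is correct, and your $(2)\Rightarrow(3)$ argument is essentially identical to the paper's. The genuine difference lies in how uniqueness is obtained. The paper proves the cycle $(1)\Rightarrow(2)\Rightarrow(3)\Rightarrow(1)$: in the course of $(2)\Rightarrow(3)$ it observes that each ideal in a proper cover must have the explicit form $\mathcal F_i=\{Z:Z\oplus Z_i<_e A\}$ for some witness $Z_i$ to $(n-1)$-join reducibility, and then for $(3)\Rightarrow(1)$ shows that any two systems of witnesses $\{X_i\}$, $\{Y_i\}$ generate the same ideals (by placing each $Y_i$ in a unique $\mathcal F_j$ and checking $\{Z:Z\oplus Y_i<_e A\}=\mathcal F_j$). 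Your route instead proves $(2)\Rightarrow(1)$ directly via the primality lemma, never passing through $(3)$ at all; this is cleaner and actually establishes a slightly stronger standalone fact (any two proper covers by $n$ ideals coincide, with no join-reducibility hypothesis needed). The trade-off is that the paper's route isolates the explicit description $\mathcal F_i=\{Z:Z\oplus Z_i<_e A\}$ as part of the proof, and this description is invoked again in the proof of Theorem~\ref{thm:npairchar} to obtain uniform enumerations of the $\mathcal F_i$; your argument does not surface that description, though of course it is easily recovered from your $(2)\Rightarrow(3)$ witnesses. One minor wording slip: in your $(2)\Rightarrow(3)$, ``downward closure rules out both $k=i$ and $k=j$'' should read that downward closure \emph{forces} $k=i$ (from $X_i$) and $k=j$ (from $X_j$), yielding $i=j$; the intent is clear and matches the paper.
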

\begin{proof}
$(\romannumeral 1)\implies (\romannumeral 2)$ is trivial. For $(\romannumeral 2)\implies (\romannumeral 3)$, let $\{Z : Z<_e A\} = \bigcup_1^n \mathcal F_i$ be a proper covering. Let $Z_{ij} \in \mathcal F_i-\mathcal F_j $. Let $Z_i := \bigoplus_{j\neq i} Z_{ij}$. Then for every $i$, $Z_i \in \mathcal F_i$, since $\mathcal F_i$ is an ideal. If $Z_i \in \mathcal F_j$ for $j\neq i$, then $Z_{ij} \in \mathcal F_j$ since $\mathcal F_j$ is an ideal. Therefore $Z_i \not \in \mathcal F_j$ for $j\neq i$. 

If $Z_i \oplus Z_j <_e A$, then $Z_i \oplus Z_j \in \mathcal F_k$ for some $k$. But then $i=j=k$. Therefore if $i\neq j$, then $Z_i \oplus Z_j \equiv_e A$ and $A$ is $(n-1)$-join reducible witnessed by $Z_1,\dots,Z_n$. 

By Lemma~\ref{lem: idealcover}, $A$ is $n$-join irreducible. Note that $\mathcal F_i \subseteq \{Z: Z\oplus Z_i <_e A\}$. On the other hand if $Z\oplus Z_i <_e A$, and $Z\oplus Z_i \in \mathcal F_j$, then $Z_i \in \mathcal F_j$, so $j=i$. Therefore $\mathcal F_i = \{Z: Z\oplus Z_i <_e A\}$. 

$(\romannumeral 3)\implies (\romannumeral 1)$: We saw above that every proper covering by $n$ ideals is generated by sets witnessing $(n-1)$-join reducibility. So it is sufficient to show that if we have two different sets of witnesses for $(n-1)$-join reducibility, they generate the same ideals.

 Let $Y_1,\dots,Y_n <_e A$ and $X_1,\dots,X_n <_e A$ be such that  $Y_i\oplus Y_j \equiv_e A$ and $X_i \oplus X_j \equiv_e A$ for $i\neq j$. Let $\mathcal F_i = \{Z : Z\oplus X_i <_e A\}$. For every $i$, there is exactly one $j$ such that $Y_i \in \mathcal F_j$. Without loss of generality suppose $Y_i \in \mathcal F_i$ and $Y_i \not \in \mathcal F_j$ for $j\neq i$. Then $\{Z: Z\oplus Y_i <_e A\} = \mathcal F_i$ as argued above.
\end{proof}
We can now precisely characterize the left halves of an Ahmad $n$-pair.
\begin{thm} \label{thm:npairchar}
    $A$ is $n$-join irreducible and $\lowww$ if and only if $\exists B_1,\dots,B_n $ such that $(A, B_1,\dots,B_n)$ form an Ahmad $n$-pair.
\end{thm}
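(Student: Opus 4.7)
The plan is to prove both directions, with the forward direction requiring the main work.

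For $(\Leftarrow)$, suppose $(A, B_1, \dots, B_n)$ is an Ahmad $n$-pair. By Theorem~\ref{thm:NPairSeq}, $A$ has an Ahmad sequence, and by Theorem~\ref{thm:main}, $A$ is $\lowww$. For $n$-join irreducibility, given any $A_0, \dots, A_n <_e A$, each $A_j$ satisfies $A_j \leq_e B_{k_j}$ for some $k_j \in \{1, \dots, n\}$; by the pigeonhole principle there exist $i \neq j$ with $k_i = k_j$, so $A_i \oplus A_j \leq_e B_{k_i}$, and since $A \not\leq_e B_{k_i}$ we conclude $A_i \oplus A_j <_e A$.

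For $(\Rightarrow)$, assume $A$ is $\lowww$ and $n$-join irreducible, with $A >_e 0_e$. Let $k \leq n$ be the least integer for which $A$ is $k$-join irreducible, so $A$ is $k$-join irreducible and $(k-1)$-join reducible. Theorem~\ref{thm:charirr} then furnishes witnesses $Z_1, \dots, Z_k <_e A$ with $Z_i \oplus Z_j \equiv_e A$ for $i \neq j$, along with the proper covering $\{Z : Z <_e A\} = \bigcup_{i=1}^k \mathcal F_i$, where $\mathcal F_i = \{Z : Z \oplus Z_i <_e A\}$ (when $k = 1$ we take $Z_1 = \emptyset$ so that $\mathcal F_1 = \{Z : Z <_e A\}$).

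The main obstacle is producing, for each $i$, a uniform $\Sigma^0_2$ enumeration of the ideal $\mathcal F_i$ to which Lemma~\ref{lem:boundIncomp} can be applied. By Theorem~\ref{thm:main}, $A$ has an Ahmad sequence $\{Y_m\}_m = \{\Gamma_{f(m)}(0'_e)\}_m$, and the key observation is that $Y_m \oplus Z_i <_e A$ iff $Y_m \oplus Z_i = Y_\ell$ as a set for some $\ell$. Adapting the technique of Lemma~\ref{lem:uniformEnum}, I build a uniform $\Sigma^0_2$ family $\{V_{m,\ell}^i\}_{m,\ell}$ via $W^{0'}$ representations by enumerating $x$ into $V_{m,\ell,s}^i$ at stage $s$ iff $x \in Y_{m,s}$ and the $\Sigma^0_2$ approximations of $Y_m \oplus Z_i$ and $Y_\ell$ agree on $\{0, \dots, x\}$ at stage $s$. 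If $Y_m \oplus Z_i = Y_\ell$ as sets we copy $Y_m$ successfully; otherwise a permanent disagreement develops and $V_{m,\ell}^i$ ends up finite. Thus $\{V_{m,\ell}^i\}_{m,\ell}$ enumerates every $X <_e A$ with $X \oplus Z_i <_e A$ plus some finite sets, and as a collection of degrees this is exactly the ideal $\mathcal F_i$, which does not contain $A$.

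Finally, applying Lemma~\ref{lem:boundIncomp} to each such uniform enumeration produces $B_1, \dots, B_k \leq_e 0'_e$ with $A \not\leq_e B_i$ and $X \leq_e B_i$ for every $X \in \mathcal F_i$. Padding with $B_{k+1} = \cdots = B_n = B_1$, the tuple $(A, B_1, \dots, B_n)$ is an Ahmad $n$-pair: $A \not\leq_e B_i$ for each $i$, and for any $Z <_e A$ the covering property gives some $i \leq k$ with $Z \in \mathcal F_i$, so $Z \leq_e B_i$.
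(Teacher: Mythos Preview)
Your proof is correct and follows essentially the same approach as the paper: both directions match, and in the forward direction you reduce to uniformly enumerating each ideal $\mathcal F_i$ and then invoke Lemma~\ref{lem:boundIncomp}. The only cosmetic difference is that the paper obtains the enumeration of $\mathcal F_i$ by applying Lemma~\ref{lem:uniformEnum} directly to the families $\{\Gamma_m(A)\oplus Z_i\}_m$ and the Ahmad sequence and then taking left halves of the resulting joins, whereas you inline a small variant of that construction (and you pad with copies of $B_1$ rather than with $0_e$).
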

\begin{proof}
\begin{case}{$(\impliedby)$}
If $A$ is the left half of an Ahmad $n$-pair then by Theorem \ref{thm:NPairSeq} it has an Ahmad sequence and by Theorem \ref{thm:main} it is $\lowww$. $\{Z: Z<_e A\}$ is covered by the ideals $\{Z: Z \leq_e A,B_i\}$ for $i=1,\dots,n$. Therefore $A$ is $n$-join irreducible by Lemma~\ref{lem: idealcover}.
\end{case}
\begin{case}{$(\implies)$}
Let $m\leq n$ be such that $\{Z:Z<_e A\} = \bigcup_1^m \mathcal F_i$ is a proper covering by $m$ ideals. Let $\mathcal F_i = \{Z: Z\oplus Z_i <_e A\}$ with the $Z_i$'s as in Theorem~\ref{thm:charirr}. Since $A$ is $\lowww$, by Theorem~\ref{thm:main} the family $\{Z:Z<_e A\}$ has a uniform enumeration while the parameter theorem gives us a uniform enumeration of the family of sets $\{\Gamma_n(A) \oplus Z_i\}_n$. Therefore by Lemma~\ref{lem:uniformEnum}, $ \mathcal G_i := \{\Gamma_n(A) \oplus Z_i \} \cap \{\Gamma_n(A) : \Gamma_n(A) <_e A\}$ has a uniform enumeration, hence so does $\mathcal F_i$ which is the left halves of joins of $\mathcal G$. 

Now let $B_i \leq_e 0'_e$ be the set with $B_i \not \geq_e  A$ and $\forall Z \in \mathcal F_i (Z \leq_e B_i)$ from Lemma \ref{lem:boundIncomp}. Then $(A,B_1,\dots,B_m)$ is an Ahmad $m$-pair. If $m<n$, we can extend this to an Ahmad $n$-pair by appending $0_e$'s. \qedhere
\end{case}
\end{proof}
In Theorem~\ref{thm:nIrrConst}, we build a set $A$ which is low, join reducible and $2$-join irreducible. Let $\mathcal F_0, \mathcal F_1$ be a proper covering of $\{Z: Z<_e A\}$. Then $\mathcal F_0$ cannot be an Ahmad sequence for any degree: Suppose $C$ was a degree with $\mathcal F_0$ as an Ahmad sequence. If $C <_e A$, then there is a $D$ such that $C <_e D <_e A$ by the density of the local structure. So $D \in \mathcal F_1$, but then $\mathcal F_0 \subseteq \mathcal F_1$, a contradiction! If on the other hand $C \not \leq_e A$ then $(C, A)$ is an Ahmad pair, but since $A$ is $\lowww$ this is not possible as we will see in Theorem~\ref{thm:righthigh2} below.
\begin{cor}
    There is a non-principal ideal of $\lowww$ degrees with a uniform enumeration, which is not the Ahmad sequence of a $\Sigma^0_2$ degree.
\end{cor}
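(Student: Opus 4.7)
The plan is to take the set $A$ produced by the upcoming Theorem~\ref{thm:nIrrConst} with $n=1$: a low (and hence $\lowww$), join reducible, $2$-join irreducible $\Sigma^0_2$ set. By Theorem~\ref{thm:charirr}, $\{Z : Z <_e A\}$ admits a proper covering $\mathcal{F}_0 \cup \mathcal{F}_1$ by two essential ideals, where $\mathcal{F}_i = \{Z : Z \oplus Z_i <_e A\}$ for witnesses $Z_0, Z_1 <_e A$ with $Z_0 \oplus Z_1 \equiv_e A$. The candidate ideal will be $\mathcal{F}_0$.

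For the positive properties I would proceed in three short steps. Uniform enumerability of $\mathcal{F}_0$ follows exactly as in the proof of Theorem~\ref{thm:npairchar}: $A$ has an Ahmad sequence by Theorem~\ref{thm:main}, and intersecting it with the uniform family $\{\Gamma_n(A)\oplus Z_0\}_n$ via Lemma~\ref{lem:uniformEnum} yields a uniform enumeration of $\mathcal{F}_0$. Every member of $\mathcal{F}_0$ lies strictly below $A$ and is therefore $\lowww$, since $\lowww$-ness is closed downward under $\leq_e$ by monotonicity of the jump. Non-principality is an appeal to essentiality: if $\mathcal{F}_0$ had a greatest element $M$, downwards density in the local structure would give $M <_e M' <_e A$, forcing $M' \in \mathcal{F}_1$ (else it would violate maximality of $M$), and then $M \in \mathcal{F}_1$ by downward closure, so $\mathcal{F}_0 \subseteq \{Z : Z \leq_e M\} \subseteq \mathcal{F}_1$, contradicting that $\mathcal{F}_0$ is essential.

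To show $\mathcal{F}_0$ is not an Ahmad sequence I would assume for contradiction that $\mathcal{F}_0 = \{X : X <_e C\}$ for some $\Sigma^0_2$ set $C$ and split on the position of $C$ relative to $A$. If $C <_e A$, downwards density yields $D$ with $C <_e D <_e A$; since $D \not\in \mathcal{F}_0$, we have $D \in \mathcal{F}_1$, and so $\mathcal{F}_0 \subseteq \{Z : Z \leq_e D\} \subseteq \mathcal{F}_1$, contradicting essentiality. If $A \leq_e C$, then either $A <_e C$ places $A$ into $\mathcal{F}_0 \subseteq \{Z : Z <_e A\}$ (absurd), or $A \equiv_e C$ forces $\mathcal{F}_0 = \{Z : Z <_e A\}$, which again contradicts essentiality via $Z_1 <_e A$ with $Z_1 \not\in \mathcal{F}_0$. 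Finally, if $C$ and $A$ are incomparable, then $(C, A)$ forms an Ahmad pair, since every $Z <_e C$ lies in $\mathcal{F}_0 \subseteq \{Z : Z <_e A\}$ and hence $Z \leq_e A$, while $C \not\leq_e A$ by assumption; by Theorem~\ref{thm:righthigh2} this forces $A$ to be $\highh$, contradicting that $A$ is low.

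The only real obstacle is producing $A$ itself, which is handled separately in Theorem~\ref{thm:nIrrConst}. Once $A$ is in hand, the corollary reduces to a clean case analysis in which essentiality of the covering blocks every way for $\mathcal{F}_0$ to equal $\{X : X <_e C\}$ with $C \leq_e A$, and the $\lowww$/$\highh$ asymmetry of Ahmad pairs (via Theorem~\ref{thm:righthigh2}) kills the remaining incomparable case.
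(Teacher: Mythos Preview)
Your proposal is correct and follows essentially the same approach as the paper: build a low, join reducible, $2$-join irreducible $A$ via Theorem~\ref{thm:nIrrConst}, take one ideal $\mathcal F_0$ from its proper $2$-covering, and rule out any $C$ with $\mathcal F_0=\{Z:Z<_e C\}$ by a case split on the position of $C$ relative to $A$, using density for $C<_e A$ and the $\lowww$/$\highh$ asymmetry (Theorem~\ref{thm:righthigh2}) for $C\not\leq_e A$. Your write-up is in fact slightly more careful than the paper's, since you explicitly verify non-principality and separately dispose of the $A\equiv_e C$ sub-case; the paper leaves these implicit. One small indexing slip: you invoke Theorem~\ref{thm:nIrrConst} ``with $n=1$'', but the properties you list (join reducible, $2$-join irreducible) correspond to $n=2$ in that theorem's statement.
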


\section{Right Halves of Ahmad Pairs}

In this section, we will show that the right half of an Ahmad pair has to be $\highh$. For this, we first argue that the skip of the left half is always below that of the right half. This will then allow us to show that $A^{\langle 3 \rangle} \equiv_1 \{e: \Gamma_e(A) <_e A\} \leq_e B^{\langle 2\rangle}$.
\begin{defn}
A set $G$ is an \emph{$A$-Guttridge set} if there is a computable function $f$ such that $f(x,.)$ is increasing, $\lim_s f(x,s)$ exists for every $x$, and $\langle x,y\rangle \in G\iff \exists s (f(x,s)>y$ or $f(x,s) = y$ and $x \in A)$.
\end{defn}
Note that if $\Theta$ is the Guttridge operator, then $\Theta(A)$ is an $A$-Gutteridge set and $\Theta(A) <_e A$ when $A$ is not c.e. 
\begin{lem}\label{lem:skipofpair}
    Let $A\subseteq \omega$ be any set. Then $\exists G<_e A$ with $G^\diamond \equiv_e A^\diamond$. Therefore, if $(A,B)$ is an Ahmad pair,  $A^\diamond \leq_e B^\diamond$.
\end{lem}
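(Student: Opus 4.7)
The plan is to construct $G$ as an $A$-Guttridge set (for a computable $f$ we are free to choose), and verify the three key properties: $G <_e A$, $G^\diamond \leq_e A^\diamond$, and $A^\diamond \leq_e G^\diamond$. Let $G = \Theta(A)$ where $\Theta$ is the natural enumeration operator arising from the Guttridge definition. By construction $G \leq_e A$, and the strict inequality $G <_e A$ for non-c.e.\ $A$ is the standard Guttridge argument: from an enumeration of $G$ one cannot distinguish whether the maximum of the column $G^{[x]}$ is $n_x$ (i.e.\ $x \in A$) or $n_x - 1$ (i.e.\ $x \notin A$), so $A \leq_e G$ would force $A$ to be c.e.

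The easy direction $G^\diamond \leq_e A^\diamond$ is monotonicity of the skip: since $G = \Theta(A)$, composition of enumeration operators gives $\Gamma_e(G) = \Gamma_e(\Theta(A)) = \Gamma_{h(e)}(A)$ for a computable $h$, so $\overline{K_G} \leq_1 \overline{K_A}$ via $\langle e, x\rangle \mapsto \langle h(e), x\rangle$, hence $G^\diamond \leq_e A^\diamond$.

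For the nontrivial direction $A^\diamond \leq_e G^\diamond$, I would first reduce $\overline A$ to $G^\diamond$ via the enumeration operator $\Psi(G) = \{\langle x, s\rangle : \langle x, f(x,s)\rangle \in G\}$. Its $x$-th column is all of $\omega$ when $x \in A$ (since $f(x,s) \leq n_x$ always and $G^{[x]} = [0, n_x]$) and is finite when $x \notin A$ (since $f$ eventually reaches $n_x$ but $\langle x, n_x\rangle \notin G$). Hence $\overline{\Psi(G)}^{[x]}$ is non-empty iff $x \notin A$, giving $\overline A \leq_e \overline{\Psi(G)} \leq_e G^\diamond$. To upgrade to the full $\overline{K_A}$, I would construct uniformly in $e$ an enumeration operator $\Psi_e$ on $G$ whose complement encodes $\overline{\Gamma_e(A)}$: the axioms of $\Psi_e$ replace each positive condition ``$z \in A$'' appearing in a $\Gamma_e$-axiom by the Guttridge-encoded condition ``$\langle z, f(z,s)\rangle \in G$'', indexed by a timing parameter $s$. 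The quantifier alternation that naively results is then collapsed using cototality of $A$ (every $\Sigma^0_2$ set is cototal, so $A \leq_e \overline A \leq_e G^\diamond$) together with the specific limit behaviour of $f$.

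The ``therefore'' clause is immediate: if $(A,B)$ is an Ahmad pair, $G <_e A$ implies $G \leq_e B$, so $G^\diamond \leq_e B^\diamond$ by the same skip-monotonicity argument, and combining with $A^\diamond \leq_e G^\diamond$ yields $A^\diamond \leq_e B^\diamond$. The main obstacle is the quantifier collapse in constructing the $\Psi_e$: the naive reduction yields a $\Pi_2$-in-$G^\diamond$ description of $\overline{\Gamma_e(A)}$ (``for every $\Gamma_e$-axiom there exists a timing witness in $\overline{K_G}$''), which does not in general reduce to $G^\diamond$ itself. Working around this via the cototal/$\Sigma^0_2$ structure of $A$ and the monotone approximation $f$ is where the technical weight of the proof lies.
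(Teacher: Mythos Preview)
Your approach has a genuine gap, and in fact the paper explicitly warns against exactly the choice you made. You take $G = \Theta(A)$, the Guttridge set built from $A$ itself, and then correctly observe that this gives $\overline{A} \leq_e G^\diamond$. The difficulty, as you acknowledge, is upgrading this to $\overline{K_A} \leq_e G^\diamond$. Your proposed fix---encoding $\overline{\Gamma_e(A)}$ via operators $\Psi_e$ and collapsing the resulting $\Pi_2$-in-$G^\diamond$ description using cototality of $A$---does not work. First, the lemma is stated for an \emph{arbitrary} set $A$, so you cannot assume $A$ is $\Sigma^0_2$ or cototal. Second, even granting cototality, the relation $e \in \overline{K_A} \iff \forall \langle e,D\rangle \in \Gamma_e\,\exists z \in D\,(z \in \overline{A})$ has an outer universal quantifier over a c.e.\ set of axioms, and knowing $A \leq_e \overline{A} \leq_e G^\diamond$ does not eliminate that quantifier: $\overline{K_A}$ is genuinely $\Pi^0_1(\overline{A})$, not $\Sigma^0_1(\overline{A})$, in general.

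The missing idea is simply to apply the Guttridge operator one level higher: take $G = \Theta(K_A)$ rather than $\Theta(A)$. Since $K_A \leq_e A$, one still has $G <_e A$. Now the very argument you gave for $\overline{A} \leq_e \overline{\Theta(A)}$ shows, with $A$ replaced by $K_A$, that $\overline{K_A} \leq_e \overline{G} \leq_e G^\diamond$. But $\overline{K_A} = A^\diamond$, so the nontrivial direction $A^\diamond \leq_e G^\diamond$ is immediate---no quantifier collapse is needed. The paper remarks after its proof that it was important to use $\Theta(K_A)$ and not $\Theta(A)$; your proposal illustrates precisely why.
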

\begin{proof}
    This is implicit in the ``no symmetric Ahmad pair'' argument \cite{ahmad}.  Let $G = \Theta(K_A)$ be a $K_A$-Guttridge set, where $\Theta$ is the Guttridge operator, with $f$ being the witnessing computable function. Since $K_A \leq_e A$, we have $G <_e A$. 
    
    Note that $x \not \in K_A$ precisely when there is a $y$ such that $\langle x,y\rangle \not \in G$ and $\exists s (f(x,s) = y)$. Therefore, $\overline{K_A} \leq_e \overline G \leq_e G^\diamond$. Hence \  $A^\diamond \leq_e G^\diamond$. Now if $(A,B)$ is an Ahmad pair, then $G \leq_e B$ and so $A^\diamond \leq_e B^\diamond$.
\end{proof}
Note that in the above proof, it was important to consider $\Theta(K_A)$, and not $\Theta(A)$. It is unclear if the Guttridge operator is degree theoretic. 

To extend this to right halves of Ahmad $n$-pairs, we need the following:
\begin{lem}\label{lem:intervalcodejump}
    Let $A\subseteq \omega$ be a set with a good approximation and let $C<_e A$. There is a set $D$ such that $C\leq_e D <_e A$ and $D^\diamond \equiv_e A^\diamond$.
\end{lem}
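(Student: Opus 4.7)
My plan is to construct $D$ by combining the Guttridge construction of Lemma~\ref{lem:skipofpair} with the coding-plus-diagonalization construction of Lemma~\ref{lem:incompleteset}. Using Lemma~\ref{lem:skipofpair}, first fix a Guttridge set $G$ for $K_A$, so that $G<_e A$ and $\overline{K_A}\leq_e\overline{G}$; since $G$ is $\Sigma^0_2$ and therefore cototal, this yields $A^\diamond\leq_e G^\diamond$.

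The target is a set $D$ with $G\oplus C\leq_e D\leq_e A$ and $D<_e A$. Any such $D$ is cototal, so by monotonicity of skip on cototal $\Sigma^0_2$ sets we have $A^\diamond\leq_e G^\diamond\leq_e D^\diamond\leq_e A^\diamond$, giving $D^\diamond\equiv_e A^\diamond$; meanwhile $C\leq_e D<_e A$ is immediate. To build such a $D$, I apply Lemma~\ref{lem:incompleteset} with $\Delta(A):=G\oplus C$ and with $\Gamma_e$ chosen so that every column $\Gamma_e^{[i]}(A)$ is finite (for instance, the empty operator). The lemma then produces $\Theta_e(A)$ with $G\oplus C\leq_e\Theta_e(A)<_e A$, and I set $D:=\Theta_e(A)$.

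The main obstacle is that this invocation of Lemma~\ref{lem:incompleteset} needs the hypothesis $\Delta(A)<_e A$, i.e.\ $G\oplus C<_e A$. When this fails, so that $G\oplus C\equiv_e A$, no set strictly below $A$ can lie above both $G$ and $C$, and the naive strategy of encoding all of $G$ into $D$ alongside $C$ cannot give $D<_e A$. Handling this subcase requires a more delicate priority construction on the good approximation $\{A_s\}$: encode $C$ into column $0$ of $D$, thread partial Guttridge witnesses $\langle x,y\rangle\in G$ through the remaining columns one witness at a time, and simultaneously run the $\mathcal{N}_e:\Gamma_e(D)\neq A$ diagonalization dumps of Lemma~\ref{lem:incompleteset}. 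Because each column of $G$ is finite (for each $x$ there are only $g(x)+1$ values $y$ with $\langle x,y\rangle\in G$), each initial segment $D^{[\leq e]}$ can be kept enumeration-equivalent to $C$, so the dumping argument still produces the contradiction $A\leq_e C$ whenever $\Gamma_e(D)=A$ threatens. The technical heart of the proof is verifying that in this subcase enough Guttridge witnesses nonetheless survive globally in $D$ to preserve the reduction $\overline{K_A}\leq_e\overline{K_D}$, so that $A^\diamond\leq_e D^\diamond$ is retained despite $D\not\geq_e G$.
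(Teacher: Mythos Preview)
Your Case~1 is fine (indeed you could simply take $D=G\oplus C$ there, without invoking Lemma~\ref{lem:incompleteset} at all). The gap is in Case~2.

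The problem is that the Guttridge coding is fragile: for each $x$, the information ``$x\in K_A$?'' is carried by a \emph{single} bit of $G$, namely whether $g(x)\in G^{[x]}$. Your $\mathcal N_e$ diagonalization strategies dump finitely many elements into later columns of $D$, and nothing in your sketch prevents such a dump from landing on the position coding $g(x)$ for some $x\notin K_A$; once that happens the reduction $\overline{K_A}\leq_e\overline{D}$ is lost at $x$. Saying that ``enough Guttridge witnesses survive'' names the difficulty but does not resolve it: you would need movable markers or systematic redundancy to protect the coding from the dumping, and you have not indicated how to arrange this while keeping the witness locations $\Sigma^0_1$-approximable.

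The paper avoids the case split entirely by replacing the Guttridge coding with one that is robust under finite perturbation. It encodes $e\in K_A$ by making the column $\Theta(A)^{[e]}$ equal to all of $\omega$, and $e\notin K_A$ by leaving that column finite; then $e\in A^\diamond$ iff some element is missing from $\Theta(A)^{[e]}$, which is enumeration below $\overline{\Theta(A)}$. Since the finitely many elements dumped by each $\mathcal N_i$ can never turn a finite column into all of $\omega$, the skip-coding $\mathcal H_e$ and the diagonalization $\mathcal N_e$ coexist without any hypothesis on whether $G\oplus C$ reaches $A$.
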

\begin{proof}
      We construct an enumeration operator $\Theta$, such that $\Theta(A) \oplus C <_e A$ and $\Theta(A)^\diamond \geq_e A^\diamond$. Let $A_s$ be a good approximation to $A$ and $C_s$ a correct approximation to $C$, with respect to the good approximation $A_s$. The requirements are:
    \begin{align*}
        \mathcal N_e &: \Gamma_e(\Theta(A)\oplus C) \neq A, \\
        \mathcal H_e &: e \in \Gamma_e(A) \iff \Theta(A)^{[e]} = \omega.
    \end{align*}
    At stage $s=0$, let $\Theta_s = \emptyset$. At stage $s+1$ we do the following:
    \begin{enumerate}
        \item For every $e\leq s$, let $l_{e,s} = l(A_s, \Gamma_{e,s}(\Theta_s(A_s)\oplus C_s))$. Then for every $x< l_{e,s}$, if $x \in \Gamma_{e,s}((\Theta_s(A_s)^{[< e]} \cup \N^{[\geq e]})\oplus C_s)$, let $\langle x, E\oplus F\rangle \in \Gamma_{e,s}$, be the least axiom that witnesses this. Enumerate $\angles{y}{A_s}$ into $\Theta$ for every $y \in E^{[\geq e]}$.
        \item If $e \in \Gamma_e(A_s)$, then for every $y \in \omega$ enumerate the axiom $\angles{\angles{e}{y}}{A_s}$ into $\Theta$.
    \end{enumerate}
    \begin{claim}
        Every column $\Theta(A)^{[e]}$ is finite or $\omega$, and every $\mathcal N_e$ enumerates only finitely many elements into $\Theta(A)$.
    \end{claim}
    \begin{proof}
       We prove the claim by induction on $e$. Let $G_A = \{s: A_s \subseteq A\}$ be the set of  good stages.  Only actions taken at good stages affect $\Theta(A)$. Suppose the claim is true for $i< e$. $\{l_{e,s}\}_{s \in G_A}$ is unbounded if and only if $A= \Gamma_e (\Theta(A)\oplus C)$. 
       
       By construction of $\Theta$, if $\{l_{e,s}\}_{s\in G_A}$ is unbounded, we have ensured that $\Gamma_e(\Theta(A)\oplus C)  = \Gamma_e((\Theta(A)^{[< e]} \cup \mathbb N^{[\geq e]})\oplus C)$. The induction hypothesis ensures that $\Theta(A)^{[< e]}$ is a c.e.\ set. Therefore if $l_{e,s}$ is unbounded, we would have $A \leq_e C$. 

       Hence we can assume that $l_{e,s}$ is bounded, and so $\mathcal N_e$ only enumerates finitely many elements into $\Theta(A)$. $\mathcal H_e$ either enumerates no elements into $\Theta(A)$ or makes the column $\Theta(A)^{[e]}$ have all $1$'s. So $\Theta(A)^{[e]}$ is finite or $\omega$.
    \end{proof}
    Observe that $e \in A^\diamond$ iff $e \not \in \Gamma_e(A)$ iff $\exists n (\Theta(A)^{[e]}(n) = 0)$ and so $A^\diamond \leq_e \overline{\Theta(A)}$. Therefore with $D = \Theta(A) \oplus C$, we get $C\leq_e D <_e A$ and $A^\diamond \leq_e D^\diamond$ as required.
\end{proof}
\begin{defn} We say $(A,B)$ is an Ahmad pair in the cone above $D$ if $D<_e A,B$ and $A \not \leq_e B$ and $\forall Z (D\leq_e Z <_e A \implies Z \leq_e B)$.
\end{defn}
\begin{lem}\label{lem:pairCone}
If $(A,B)$ is an Ahmad pair in the cone above $D$, then $A$ is $\lowww$ and $B$ is $\highh$.
\end{lem}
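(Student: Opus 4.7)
The plan is to establish the single reduction $A^{\langle 3 \rangle} \leq_e B^{\langle 2 \rangle}$, which yields both conclusions at once: since $A \geq_e 0_e$ forces $A^{\langle 3 \rangle} \geq_e 0^{\langle 3 \rangle}$, this gives $0^{\langle 3 \rangle} \leq_e B^{\langle 2 \rangle}$ and hence $B$ is $\highh$; while $B$ being $\Sigma^0_2$ forces $B^{\langle 2 \rangle} \leq_e 0^{\langle 3 \rangle}$, so $A^{\langle 3 \rangle} \leq_e 0^{\langle 3 \rangle}$ and $A$ is $\lowww$.

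The proof proceeds in two technical steps, then applies Lemma~\ref{lem:incompbelowskip}. For the first step, I would apply Lemma~\ref{lem:intervalcodejump} with $C = D$ to produce $E$ with $D \leq_e E <_e A$ and $E^\diamond \equiv_e A^\diamond$; by the cone property $E \leq_e B$, and since $\Sigma^0_2$ sets are cototal (so skip agrees with jump), $A^\diamond \equiv_e E^\diamond \leq_e B^\diamond$, giving $A' \leq_e B'$ and so $A'' \leq_e B''$ by monotonicity of the enumeration jump. For the second step, I would show $\mathcal F := \{e : \Gamma_e(A) \leq_e B\}$ satisfies $\mathcal F \leq_e B''$. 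For each pair $(i, e)$, Lemma~\ref{lem:andorproj} shows the symmetric difference $\Gamma_i(B) \triangle \Gamma_e(A)$ is uniformly enumerable from $A \oplus A^\diamond \oplus B \oplus B^\diamond \equiv_e A' \oplus B'$, so its non-emptiness predicate is $\leq_e A' \oplus B'$; the emptiness predicate (equivalently, $\Gamma_i(B) = \Gamma_e(A)$) is the complement, so it lies below $(A' \oplus B')^\diamond \equiv_e A'' \oplus B''$ (using that the join of totals is total, with skip agreeing with jump). Step 1 collapses $A'' \oplus B''$ to $B''$, and projecting existentially over $i$ yields $\mathcal F \leq_e B'' \equiv_e B^{\langle 2 \rangle}$.

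Finally, I would invoke Lemma~\ref{lem:incompbelowskip} with the enumeration operator $\Delta$ satisfying $\Delta(A) = D$; since $D <_e A$, the condition $\Delta(A) <_e A$ is met. The cone property gives $X_A^\Delta = \{e : D \leq_e \Gamma_e(A) <_e A\} \subseteq \mathcal F$, while $\mathcal F \subseteq X_A$ follows from $A \not\leq_e B$ (if $\Gamma_e(A) \leq_e B$ then $\Gamma_e(A) \not\equiv_e A$, hence $\Gamma_e(A) <_e A$). The Lemma then delivers $A^{\langle 3 \rangle} \leq_1 \mathcal F \leq_e B^{\langle 2 \rangle}$, completing the proof. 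The crucial subtlety is in Step 2's collapse of the emptiness predicate to $B''$: this depends on having first established $A'' \leq_e B''$, which in turn crucially uses the interval coding lemma applied at the cone base $D$.
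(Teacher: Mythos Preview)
Your proof is correct and follows essentially the same route as the paper: apply Lemma~\ref{lem:intervalcodejump} at the cone base $D$ to obtain $A^\diamond \leq_e B^\diamond$, show by quantifier counting that $\mathcal F = \{e : \Gamma_e(A) \leq_e B\} \leq_e B^{\langle 2 \rangle}$, and invoke Lemma~\ref{lem:incompbelowskip} (with $\Delta(A) = D$) to get $A^{\langle 3 \rangle} \leq_1 \mathcal F$, from which both jump-class conclusions follow.

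One small correction: your assertion $(A' \oplus B')^\diamond \equiv_e A'' \oplus B''$ is \emph{not} a general fact about joins of total sets --- the jump of a join can strictly exceed the join of the jumps --- so the parenthetical justification you give is misleading. The equation does hold here, but only because Step~1 already gave you $A' \leq_e B'$, so that $A' \oplus B' \equiv_e B'$ and both sides collapse to $B''$. The paper sidesteps this detour entirely: having $A', B' \leq_e B^\diamond$, the inner disagreement predicate is directly $\leq_e B^\diamond$, its negation is $\leq_1 B^{\langle 2 \rangle}$, and the existential projection keeps it there.
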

\begin{proof}
By Lemma \ref{lem:intervalcodejump}, there is a $Z \in [D , A)$ such that $Z^\diamond \geq_e A^\diamond$. Therefore $B^\diamond \geq_e A^\diamond$. Let $\mathcal F = \{e: \Gamma_e(A) \leq_e B\}$. By Lemma~\ref{lem:incompbelowskip}, $A^{\langle 3 \rangle} \leq_1 \mathcal F$.

To show that $B$ is $\highh$, we will argue that $B^{\langle 2 \rangle}\geq_e  \mathcal F$. The following analysis is similar to the proof of Theorem~\ref{thm:main}: $e \in \mathcal F$ if and only if $\exists n (\Gamma_e(A) = \Gamma_n(B))$ if and only if
    \[ \exists n \neg \exists m \;( \Gamma_e(A)(m) \neq \Gamma_n(B)(m)).\]
    Note that $m \in \Gamma_e(A)$, $m \not \in \Gamma_e(A)$, $m \in \Gamma_n(B)$ and $m \not \in \Gamma_n(B)$ are all either $\leq_1 A'$ or $\leq_1 B'$. 
    
    Since $A$ and $B$ are $\Sigma^0_2$, $A' \leq_e A^\diamond$ and $B' \leq_e B^\diamond$. Therefore $B',A' \leq_e B^\diamond$. By Lemma~\ref{lem:andorproj}, the expression $\exists m \;( \Gamma_e(A)(m) \neq \Gamma_n(B)(m))$ is $\leq_e B^\diamond$ while its negation is $\leq_1 B^{\langle 2 \rangle}$. Using Lemma~\ref{lem:andorproj} again, we see that $\mathcal F \leq_e B^{\langle 2 \rangle}$.
\end{proof}
Ahmad \cite{ahmad} showed that there are no sets $A,B$ with $A \not \equiv_e B$ such that $\{Z:Z<_e A\} = \{Z: Z<_e B\}$. We can extend this below.
\begin{cor}
There are no sets $A,B,D$ with $D<_e A,B$ and $A \not \equiv_e B$ such that $\{Z: D \leq_e Z <_e A\} = \{Z: D\leq_e Z <_e B\}$.
\end{cor}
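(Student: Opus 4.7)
The plan is to apply Lemma~\ref{lem:pairCone} twice and exploit the disjointness of the jump classes $\lowww$ and $\highh$. Suppose for contradiction that such $A$, $B$, $D$ exist with $A \not\equiv_e B$. I would split into two cases based on the comparability of $A$ and $B$.

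First, the easy case: suppose $A$ and $B$ are comparable. Without loss of generality $A <_e B$. Then $D \leq_e A <_e B$, so $A$ itself belongs to $\{Z : D \leq_e Z <_e B\}$, which by assumption equals $\{Z : D \leq_e Z <_e A\}$. This forces $A <_e A$, an immediate contradiction.

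In the remaining case $A$ and $B$ are incomparable, and I would verify that $(A,B)$ is an Ahmad pair in the cone above $D$: the hypothesis gives $D <_e A,B$; incomparability gives $A \not\leq_e B$; and for any $Z$ with $D \leq_e Z <_e A$, the assumed equality puts $Z$ in $\{Z : D \leq_e Z <_e B\}$, hence $Z \leq_e B$. By the symmetry of the hypothesis in $A$ and $B$, $(B,A)$ is also an Ahmad pair in the cone above $D$.

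Now Lemma~\ref{lem:pairCone} applied to $(A,B)$ yields that $A$ is $\lowww$ and $B$ is $\highh$; applied to $(B,A)$ it yields that $B$ is $\lowww$ and $A$ is $\highh$. But no $\Sigma^0_2$ set can be simultaneously $\lowww$ and $\highh$: being $\highh$ gives $A'' \geq_e 0'''_e$, whence $A''' \geq_e 0^{(4)}_e >_e 0'''_e$, contradicting $A''' \leq_e 0'''_e$. This contradiction completes the proof. There is no real obstacle here beyond checking that the Ahmad-pair-in-a-cone hypotheses are symmetric in $A$ and $B$; all the substance has already been packaged into Lemma~\ref{lem:pairCone}.
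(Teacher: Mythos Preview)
Your proof is correct and follows the same approach as the paper: both derive a contradiction by applying Lemma~\ref{lem:pairCone} to the symmetric Ahmad pairs $(A,B)$ and $(B,A)$ in the cone above $D$, obtaining that $A$ is simultaneously $\lowww$ and $\highh$. Your explicit case split handling comparability is a helpful clarification that the paper leaves implicit (the comparable case is immediately ruled out by the interval equality itself, as you observe).
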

\begin{proof} Suppose $A, B, D$ were such that $D<_e A,B$ and $\{Z: D \leq_e Z <_e A\} = \{Z: D\leq_e Z <_e B\}$. Then $(A,B)$ and $(B,A)$ would be Ahmad pairs in the cone above $D$, and so by Lemma~\ref{lem:pairCone} $A$ would be $\lowww$ and $\highh$, a contradiction! 
\end{proof}
If $(A,B_0)$ is an Ahmad pair, then $(A,B_0,B_1)$ is an Ahmad $2$-pair for any $B_1 \not \geq_e A$, in particular $B_1$ may be low, or even $0_e$. However, we have the following:

\begin{thm}\label{thm:righthigh2}
    If $(A,B_0,\dots,B_{n-1})$ is an Ahmad $n$-pair and $B_i$ is essential, then $B_i$ is $\highh$. In particular if $(A,B_0,\dots,B_{n-1})$ is a proper Ahmad $n$-pair, then $B_j$ is $\highh$ for every $j<n$.
\end{thm}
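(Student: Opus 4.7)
The plan is to reduce this directly to Lemma~\ref{lem:pairCone}, by observing that essentiality of $B_i$ lets us restrict attention to a cone below $A$ on which $B_i$ behaves like the right half of a genuine Ahmad pair. Concretely, since $B_i$ is essential, I would fix a witness $C <_e A, B_i$ such that $C \not\leq_e B_j$ for every $j \neq i$. My claim is then that $(A, B_i)$ is an Ahmad pair in the cone above $C$ in the sense of the definition preceding Lemma~\ref{lem:pairCone}.

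To verify the cone property, the three things to check are $C <_e A, B_i$ (immediate from the choice of $C$), $A \not\leq_e B_i$ (part of the Ahmad $n$-pair hypothesis), and the key implication: if $C \leq_e Z <_e A$, then $Z \leq_e B_i$. For this last point, given such a $Z$, the Ahmad $n$-pair property gives some $j<n$ with $Z \leq_e B_j$. If $j \neq i$, then transitivity yields $C \leq_e Z \leq_e B_j$, contradicting the defining property of the witness $C$. Hence $j = i$, as required. Once the cone property is established, Lemma~\ref{lem:pairCone} delivers immediately that $B_i$ is $\highh$.

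The ``in particular'' clause is then a one-line observation: a proper Ahmad $n$-pair is one in which every $B_j$ is essential, so the first part applies to each $j<n$ separately. I do not expect any real obstacle in this argument; the content has already been absorbed into Lemma~\ref{lem:intervalcodejump} and Lemma~\ref{lem:pairCone}, and the only new ingredient is the pigeonhole-style use of essentiality to funnel the cone above $C$ entirely into $B_i$.
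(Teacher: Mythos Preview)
Your proposal is correct and follows essentially the same approach as the paper: fix an essentiality witness $C$, observe that $(A,B_i)$ is an Ahmad pair in the cone above $C$, and apply Lemma~\ref{lem:pairCone}. Your verification of the cone property is in fact more explicit than the paper's, which simply asserts it; the argument is identical in spirit.
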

\begin{proof}
   Since $B_i$ is an essential right half, there is a $D \leq_e A,B_i$ such that $D \not \leq_e B_j$ for $j\neq i$. But then $(A,B_i)$ is an Ahmad pair in the cone above $D$. Therefore by Lemma~\ref{lem:pairCone}, $B_i$ is $\highh$.
\end{proof}
 Using $\mathcal K$ pairs, Ganchev and Sorbi \cite{initSeg} construct an initial segment of high degrees below $0'_e$. Let $\mathcal I := \{Z: 0_e <_e Z <_e X\}$ be such an initial segment. No $B \in \mathcal I$ can be the right half of an Ahmad pair: For a non-c.e.\ set $A$, let $C <_e A$ be non-c.e.\ (which exists by downward density). If $C \leq_e B$ then $C$ must be high, but then so is $A$. Therefore $(A,B)$ is not an Ahmad pair.
\begin{cor}
    There is a high degree which does not bound the right half of an Ahmad pair.
\end{cor}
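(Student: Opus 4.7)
The plan is to let $B$ be any nonzero element of the initial segment $\mathcal{I} = \{Z : 0_e <_e Z <_e X\}$ of high degrees produced by Ganchev and Sorbi, as cited in the paragraph preceding this corollary. Then $B$ itself is high, and I will establish the stronger statement that no $Z \leq_e B$ can be the right half of an Ahmad pair.

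Since $\mathcal{I} \cup \{0_e\}$ is downward closed, every $Z \leq_e B$ either equals $0_e$ or lies in $\mathcal{I}$. The case $Z = 0_e$ follows immediately from downward density: any $A >_e 0_e$ admits a $C$ with $0_e <_e C <_e A$, so the Ahmad requirement would force $C \leq_e 0_e$ and hence $C \equiv_e 0_e$, a contradiction. For $Z \in \mathcal{I}$, suppose toward a contradiction that $(A, Z)$ is an Ahmad pair. Choose by downward density some $C$ with $0_e <_e C <_e A$; the Ahmad property then gives $C \leq_e Z$, and downward closure of $\mathcal{I} \cup \{0_e\}$ places $C$ in $\mathcal{I}$, so $C$ is high. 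Monotonicity of the enumeration jump yields $A'' \geq_e C'' \geq_e 0'''_e$, so $A$ is high as well.

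On the other hand, Corollary~\ref{cor:pairiff} forces the left half $A$ of an Ahmad pair to be $\lowww$, so $A''' \leq_e 0'''_e$. Combining the two bounds gives $0'''_e \leq_e A'' \leq_e A''' \leq_e 0'''_e$ and hence $A'' \equiv_e A'''$, contradicting the strict monotonicity $X' >_e X$ of the enumeration jump applied at $X = A''$. The only substantive external input is the Ganchev-Sorbi initial segment of high degrees; the rest is a routine synthesis of Corollary~\ref{cor:pairiff} with downward density and strict monotonicity of the jump, and I do not foresee any real obstacle.
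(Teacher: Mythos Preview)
Your proof is correct and follows essentially the same approach as the paper: use the Ganchev--Sorbi initial segment of high degrees, and argue that any nonzero element of it (or anything below it) cannot be a right half because the Ahmad property would force the left half $A$ to bound a high degree, contradicting $A$ being $\lowww$. You have spelled out the ``bounds'' part and the jump arithmetic more explicitly than the paper does, but the argument is the same.
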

\begin{defn}
  For a computable function $f$, we call the family $\{\Gamma_{f(n)}(C)\}_n$ an \emph{Ahmad sequence for $A$ relative to $C$} if $\{\Gamma_{f(n)}(C)\}_n = \{Z: Z<_e A\}$.
\end{defn}

\begin{lem}\label{lem:relSeq}
    If $(A,B)$ is an Ahmad pair, then $A$ has an Ahmad sequence relative to $B$.
\end{lem}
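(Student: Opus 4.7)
The plan is to adapt Lemma~\ref{lem:uniformEnum} to the mixed-oracle setting, enumerating the set-theoretic intersection of two uniform families --- one over $B$ and one over $0'_e$ --- uniformly from $B$. By Theorem~\ref{thm:PairSeq}, $A$ has an absolute Ahmad sequence $\{\Gamma_{g(k)}(0'_e)\}_k$, and by the Ahmad pair property each member is $\leq_e B$. The family $\{\Gamma_m(B)\}_m$ trivially enumerates $\{Z : Z \leq_e B\}$ uniformly from $B$. The intersection of these two families, viewed as sub-families of $\mathcal{P}(\omega)$, is precisely $\{Z : Z <_e A\}$: every $Z <_e A$ lies in both, and any common set is $\leq_e B$ hence $\neq A$, so $<_e A$.

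I would construct a computable $h$ such that $\Gamma_{h(\langle m, k\rangle)}(B) = \Gamma_m(B)$ whenever $\Gamma_m(B) = \Gamma_{g(k)}(0'_e)$, and is a finite set otherwise. Since $A$ is non-c.e., all finite sets are $<_e A$ and thus lie in both families, so the finite ``junk'' outputs remain valid members of the target family. Fix a computable good approximation $\{B_s\}$ to $B$ (which exists because $B$ is $\Sigma^0_2$) and the standard computable stage-$s$ approximation $\{\Gamma_{g(k), s}(O_s)\}$ to each $\Sigma^0_2$ set $\Gamma_{g(k)}(0'_e)$. At each stage $s$, compute the length of agreement $\ell_s$ between $\Gamma_{m, s}(B_s)$ and $\Gamma_{g(k), s}(O_s)$; for each $x \leq \ell_s$ that lies in both stage approximations, enumerate the axiom $\langle x, B_s \rangle$ into $\Gamma_{h(\langle m, k\rangle)}$.

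The pivotal trick is that the axiom $\langle x, B_s \rangle$ fires precisely when $B_s \subseteq B$, i.e.\ exactly at good stages for $B$. Any axiom added at a bad stage contains some element outside $B$ and is therefore inert, so $\Gamma_{h(\langle m, k\rangle)}(B)$ is determined entirely by what happens at good stages. If $\Gamma_m(B) = \Gamma_{g(k)}(0'_e) =: Z$, then at good stages both stage-$s$ approximations sit inside $Z$, and any fixed initial segment $[0,\ell) \cap Z$ is eventually contained in both approximations at some good stage; hence $\sup_{s \in G_B} \ell_s = \infty$, every $x \in Z$ receives a firing axiom, and the filter $x \in \Gamma_{m, s}(B_s) \cap \Gamma_{g(k), s}(O_s)$ forbids outputs outside $Z$. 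If the two target sets differ at a minimal $x_0$, then once $x_0$ is reflected in both approximations (at a sufficiently late good stage), $\ell_s \leq x_0$ thereafter, and only the finitely many elements enumerated at prior good stages land in $\Gamma_{h(\langle m, k\rangle)}(B)$.

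The main obstacle is that $B$ does not in general enumerate $0'_e$, so the $W^{0'}$-based bookkeeping of Lemma~\ref{lem:uniformEnum} cannot be transcribed verbatim. The good approximation $\{B_s\}$ resolves this by pushing the $B$-dependent check into the axiom's premise: the cross-oracle agreement test is executed at the level of stage-wise computable approximations, while the correctness of each axiom's firing is synchronized with the good stages of $B$, where the approximation behaves as required.
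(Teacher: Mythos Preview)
Your argument has a synchronization gap between the two approximations. The axioms $\langle x, B_s\rangle$ fire exactly at $s \in G_B$, but at such a stage there is no reason $O_s \subseteq 0'_e$: two independently chosen good approximations need not share good stages, and even if you derive $B_s$ from a good approximation $\{O_s\}$ to $0'_e$ you obtain only $G_{0'_e} \subseteq G_B$, not equality. So your assertion that ``at good stages both stage-$s$ approximations sit inside $Z$'' is unjustified on the $0'_e$ side. More damagingly, in the mismatch case your claim that ``$\ell_s \le x_0$ thereafter'' fails: at stages $s \in G_B \setminus G_{0'_e}$ the set $\Gamma_{g(k),s}(O_s)$ bears no controlled relation to $\Gamma_{g(k)}(0'_e)$ and may spuriously agree with $\Gamma_{m,s}(B_s)$ on arbitrarily long initial segments infinitely often. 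Then $\Gamma_{h(\langle m,k\rangle)}(B)$ is an infinite subset of $\Gamma_m(B)$, certainly $\leq_e B$ but with no reason to be $\leq_e A$, so your output family need not lie inside $\{Z : Z <_e A\}$. The underlying obstruction is exactly the one you name in your last paragraph: $B$ does not enumerate $0'_e$, so a $B$-axiom cannot recognize good stages of $0'_e$; pushing the test into the axiom's premise filters for $G_B$ only, not for $G_{0'_e}$.

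The paper takes a different route and does not use the absolute Ahmad sequence at all. It compares $B$-indices against $A$-indices: with $X = \{e : \exists i\,(\Gamma_e(B) = \Gamma_i(A))\}$, Lemma~\ref{lem:skipofpair} gives $A^\diamond \leq_e B^\diamond$, which is precisely what places the inner equality test below $B^\diamond$, so that $X \leq_e B^{\langle 2\rangle}$ and hence $\overline X \leq_1 B^{\langle 3\rangle}$. Lemma~\ref{lem:pi3complete} applied to $B$ then converts this into uniformly $B$-enumerable sets $X_e$ whose columns code membership in $X$, and $\Gamma_e(B) \cap X_e^{[i]}$ gives the sequence. The ingredient your approach lacks is a mechanism for bringing the cross-oracle comparison down to $B$; the paper supplies it via $A^\diamond \leq_e B^\diamond$.
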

\begin{proof}
    Consider the set $X = \{e:\exists i \forall x  ( x \in \Gamma_e(B) \iff x \in \Gamma_i(A))\}$. Observe that $\{\Gamma_e(B) : e \in X\} = \{Z : Z<_e A\}$ since $(A,B)$ is an Ahmad pair. By Lemma~\ref{lem:skipofpair}, $B^\diamond \geq_e A^\diamond$, and $B'\equiv_e B^\diamond$ since $B$ is $\Sigma^0_2$. Replicating the analysis in Theorem~\ref{thm:main}, $\exists x \; ( x \in \Gamma_e(B) \iff x \not \in \Gamma_i(A))$ is $\leq_e B^\diamond$. Therefore its negation is below $B^{\langle 2 \rangle}$, and by Lemma~\ref{lem:andorproj}, $X \leq_e B^{\langle 2 \rangle}$. But then $X \leq_1 K_{B^{\langle 2 \rangle}} = \overline{B^{\langle 3\rangle}}$.
    
    By Lemma \ref{lem:pi3complete}, for every $e \in \omega$, we can uniformly find a set $X_{e} \leq_e B$ such that $X_{e}^{[i]} = \omega$ for some $i$ (and finite otherwise) iff $e \in X$ and $X_{e}^{[i]}$ is finite for every $e\in \omega$ iff $e \not \in X$. 
   
  Let $f$ be the computable function defined as $\Gamma_{f(e,i)}(B) = \Gamma_{e}(B)\cap X_{e}^{[i]}$. Then $\{\Gamma_{f(m)}(B)\}_m$ is an Ahmad sequence for $A$ relative to $B$.
\end{proof}
The proof here does not use `c.e.\ relative to' notions like in Lemma~\ref{lem:uniformEnum} since $0'_e$ is total, while $B$ may not be. Note that if $C \geq_e B$, then by the parameter theorem, if $A$ has an Ahmad sequence relative to $B$, it also has an Ahmad sequence relative to $C$.

Similarly, we now state the more general version of Theorem~\ref{thm:main}. 
\begin{thm}\label{thm:maingeneralized}
The following are equivalent: 
\begin{enumerate}
\item  $A$ has an Ahmad sequence $\{\Gamma_{f(n)}(C)\}_n$ relative to $C$.
\item $A$ is $\lowww$, $C$ is $\highh$ and either $A\leq_e C$ or $(A,C)$ is an Ahmad pair.
\end{enumerate}
\end{thm}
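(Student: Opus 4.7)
The plan is to prove each direction by splitting along the two clauses of the disjunction in (2) and adapting Theorem~\ref{thm:main} and Lemma~\ref{lem:relSeq} to use $C$ in place of $0'_e$.

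For $(2) \implies (1)$, if $(A,C)$ is an Ahmad pair then Lemma~\ref{lem:relSeq} supplies the desired relative Ahmad sequence immediately. If instead $A \leq_e C$, I would rerun the proof of Theorem~\ref{thm:main}(iii)$\implies$(i), the only input that must be reestablished being the bound $X_A \leq_e C^{\langle 2 \rangle}$. But $A$ being $\lowww$ gives $X_A \equiv_1 A^{\langle 3 \rangle} \leq_e 0^{\langle 3 \rangle}$, and $C$ being $\highh$ together with $C$ being $\Sigma^0_2$ gives $C^{\langle 2 \rangle} \equiv_e C'' \geq_e 0'''_e$, so the bound holds. From this one extracts a predicate $R \leq_e C$ with $e \in X_A \iff \exists n \forall m\, R(e,n,m)$, defines $h$ so that $m \in \Gamma_{h(e,n)}(C) \iff \forall m' < m\, R(e,n,m')$, and uses the parameter theorem (valid because $A \leq_e C$) to find $k$ with $\Gamma_{k(e)}(C) = \Gamma_e(A)$. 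Then $\{\Gamma_{k(e)}(C) \cap \Gamma_{h(e,n)}(C)\}_{e,n}$ is the desired Ahmad sequence for $A$ relative to $C$.

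For $(1) \implies (2)$, the disjunction comes almost for free: each $\Gamma_{f(n)}(C)$ is $\leq_e C$, so $\{Z : Z <_e A\} \subseteq \{Z : Z \leq_e C\}$; hence either $A \leq_e C$, or $A \not\leq_e C$ and $(A,C)$ is an Ahmad pair by definition. In either case one gets $A^\diamond \leq_e C^\diamond$---directly when $A \leq_e C$, and via Lemma~\ref{lem:skipofpair} when $(A,C)$ is an Ahmad pair. I then mimic the argument of Theorem~\ref{thm:main}(i)$\implies$(ii): writing $X_A = \{e : \exists n\, (\Gamma_e(A) = \Gamma_{f(n)}(C))\}$, each of the four basic membership terms is $\leq_e C^\diamond$ (using $A' \equiv_e A^\diamond \leq_e C^\diamond$ and $C' \equiv_e C^\diamond$), so by Lemma~\ref{lem:andorproj} the disagreement quantifier is $\leq_e C^\diamond$, its negation $\leq_1 C^{\langle 2 \rangle}$, and finally $X_A \leq_e C^{\langle 2 \rangle} \leq_e 0^{\langle 3 \rangle}$. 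Hence $A^{\langle 3 \rangle} \equiv_1 X_A \leq_e 0^{\langle 3 \rangle}$ and $A$ is $\lowww$.

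The step I expect to require the most care is promoting the above to $C$ being $\highh$. The key observation is that any $\lowww$ $\Sigma^0_2$ set $A$ satisfies $A^{\langle 3 \rangle} \equiv_e 0^{\langle 3 \rangle}$, since $0_e \leq_e A$ pushes $0^{\langle 3 \rangle} \leq_e A^{\langle 3 \rangle}$ through monotonicity of the jump while $\lowww$-ness supplies the reverse inequality. Substituting this into $A^{\langle 3 \rangle} \leq_e C^{\langle 2 \rangle}$ yields $0^{\langle 3 \rangle} \leq_e C^{\langle 2 \rangle} \equiv_e C''$, i.e., $C$ is $\highh$. Without this promotion the bound on $X_A$ would only confirm that $A$ is $\lowww$; the tightness of the equivalence hinges on exploiting the fact that being $\lowww$ forces $A$'s third iterate to sit at exactly the degree of $0'''_e$, so that any oracle able to enumerate the sets strictly below $A$ must itself be $\highh$.
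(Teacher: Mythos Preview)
Your proposal is correct and follows essentially the same route as the paper's proof. The paper compresses the derivation of ``$A$ is $\lowww$ and $C$ is $\highh$'' into a single line, asserting that $A^{\langle 3\rangle}\leq_e C^{\langle 2\rangle}$ ``can only happen'' under those hypotheses, whereas you spell out the intermediate step $A^{\langle 3\rangle}\equiv_e 0^{\langle 3\rangle}$ explicitly; but the underlying reasoning and the lemmas invoked (Lemma~\ref{lem:skipofpair}, Lemma~\ref{lem:relSeq}, and the analysis from Theorem~\ref{thm:main}) are identical.
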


\begin{proof}
\begin{case}{$(\romannumeral 1) \implies (\romannumeral 2)$}
    If $A$ has an Ahmad sequence relative to $C$, then by Lemma~\ref{lem:skipofpair}, $C^\diamond \geq_e A^\diamond$. Therefore the proof of Theorem~\ref{thm:main} shows that $X_A \leq_e C^{\langle 2 \rangle}$. Since $X_A \equiv_1 A^{\langle 3\rangle}$, we must have $A^{\langle 3 \rangle} \leq_e C^{\langle 2 \rangle}$. Since $A$ and $C$ are $\Sigma^0_2$, the jump and skip coincide, and so this can only happen if $A$ is $\lowww$ and $C$ is $\highh$. 
    
    Moreover it is clear that $C \geq_e A$ or $(A,C)$ is an Ahmad pair, as otherwise, we cannot have $C$ indices for $\{Z: Z<_e A\}$.
\end{case}
\begin{case}{$(\romannumeral 2) \implies (\romannumeral 1)$}
 The proof in Theorem~\ref{thm:main} uses the fact that $C \geq_e A$ and $X_A \equiv_1 A^{\langle 3 \rangle} \leq_e C^{\langle 2 \rangle}$. Therefore if $A$ is $\lowww$, $C$ is $\highh$ and $A\leq_e C$, there is an Ahmad sequence for $A$ relative to $C$. 

 On the other hand if $(A,C)$ is an Ahmad pair, then by Lemma~\ref{lem:relSeq} there is an Ahmad sequence for $A$ relative to $C$. \qedhere
\end{case}
\end{proof}

We can strengthen Lemma~\ref{lem:boundIncomp} to get more information about the distribution of the right halves in the local structure for a particular left half.
\begin{lem}\label{lem:boundIncompHigh}
Let $f$ be computable and $\mathcal F = \{\Gamma_{f(n)}(C)\}_n$ be an ideal with $A \not \in \mathcal F$. Then there is a set $B\leq_e C$ with $A \not \leq_e B$ and $\forall X \in \mathcal F (X \leq_e B)$. Moreover, we can ensure $B^\diamond \equiv_e C^\diamond$ and given a set $D<_e C$, we can ensure $B \not \leq_e D$. 
\end{lem}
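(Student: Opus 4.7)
The plan is to extend the construction of Lemma~\ref{lem:boundIncomp}, replacing the oracle $0'_e$ with $C$ throughout and adding two families of requirements to secure the new conclusions. Fix a good approximation $\{C_s\}$ to $C$ and correct approximations $\{A_s\}$, $\{D_s\}$, $\{\Gamma_{f(e)}(C)_s\}$ relative to it. We build $B=\Theta(C)$ for an enumeration operator $\Theta$, with the columns of $B$ indexed by pairs $\langle e,k\rangle$ for $k\in\{0,1,2\}$, to serve the requirements
\begin{align*}
\mathcal N_e &: \Gamma_e(B) \neq A, \\
\mathcal P_e &: B^{[\langle e,0\rangle]} =^* \Gamma_{f(e)}(C), \\
\mathcal H_e &: B^{[\langle e,1\rangle]} = \omega \iff e \in \Gamma_e(C), \\
\mathcal D_e &: \Gamma_e(D) \neq B.
\end{align*}
The $\mathcal N_e$ and $\mathcal P_e$ actions are copied from Lemma~\ref{lem:boundIncomp} with $C_s$ in place of $K_s$. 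The $\mathcal H_e$ strategy is borrowed from Lemma~\ref{lem:intervalcodejump}: at stage $s+1$, if $e\in \Gamma_{e,s}(C_s)$, enumerate $\langle\langle\langle e,1\rangle, y\rangle, C_s\rangle$ into $\Theta$ for every $y\leq s$, so the column $\langle e,1\rangle$ of $B$ stabilises to $\omega$ exactly when $e\in \Gamma_e(C)$ and remains finite otherwise.

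The $\mathcal D_e$ strategy operates on a witness stream in column $\langle e,2\rangle$: it maintains a current witness $x_e$ and resets it to a fresh large value whenever a higher-priority $\mathcal N_i$ enumerates the current witness into $B$. At each good stage $s$, it monitors $\Gamma_{e,s}(D_s)$; upon detecting an axiom $\langle x_e, F\rangle \in \Gamma_{e,s}$ with $F\subseteq D_s$ that persists at sufficiently many further good stages, it locks $x_e$ out of $B$, ensuring $x_e\in \Gamma_e(D)\setminus B$ once the axiom is confirmed to be correct.

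The verification refines the inductive claim of Lemma~\ref{lem:boundIncomp}. For $\mathcal N_e$, the $\mathcal P$-columns are in the ideal $\mathcal F$ by construction, the $\mathcal H$-columns are each $\omega$ or finite (hence c.e., of trivial enumeration degree and lying in $\mathcal F$), and the stabilised $\mathcal D$-columns contribute only finitely many elements. Consequently, if the length of agreement for $\Gamma_e(B)=A$ grew unboundedly at good stages, the argument of Lemma~\ref{lem:boundIncomp} would place $A$ below a finite join of elements of $\mathcal F$, contradicting $A\notin\mathcal F$. For $\mathcal H_e$, we obtain $C^\diamond \leq_e \overline{B} \leq_e B^\diamond$ exactly as in Lemma~\ref{lem:intervalcodejump}, while $B\leq_e C$ gives $B^\diamond \leq_e C^\diamond$ by monotonicity of the skip on $\Sigma^0_2$ sets (the cototality comment after Lemma~\ref{lem:poststheorem}), yielding $B^\diamond \equiv_e C^\diamond$.

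The main obstacle is the $\mathcal D_e$ requirement: its success hinges on a finite-injury argument showing that the dynamic witness eventually stabilises, together with a correctness analysis of the $\{D_s\}$-approximation at good stages certifying that the locked witness genuinely separates $B$ from $\Gamma_e(D)$. The crucial point, as in the original lemma, is that each $\mathcal N_i$ enumerates only finitely many elements in total, so each fresh $\mathcal D_e$-witness is invalidated by higher-priority actions at most finitely many times before settling; once stabilised, the witness either appears in $\Gamma_e(D)$ at a stable good-stage approximation (in which case we have $x_e\in \Gamma_e(D)\setminus B$) or never does (in which case we have rejected the hypothesis $\Gamma_e(D)=B$ at all sufficiently late witnesses). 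Coordinating this with the $\mathcal N_e$-actions is the technical heart of the extension.
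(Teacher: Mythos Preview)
Your treatment of the $\mathcal N_e$, $\mathcal P_e$, and $\mathcal H_e$ requirements matches the paper's proof and is fine. The gap is in your $\mathcal D_e$ strategy.

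Your plan is to pick a witness $x_e$ in a dedicated column, refrain from enumerating it into $B$, and wait for $x_e$ to appear in $\Gamma_e(D)$. But this does not produce a diagonalization: if $x_e\notin\Gamma_e(D)$ (and nothing in your construction forces otherwise, since you neither control $D$ nor put $x_e$ into $B$), then $B$ and $\Gamma_e(D)$ \emph{agree} at $x_e$, both giving value $0$. Your clause ``we have rejected the hypothesis $\Gamma_e(D)=B$ at all sufficiently late witnesses'' does not hold up: merely keeping a point out of $B$ gives no disagreement. And the usual finite-injury toggle (enumerate, then extract when the opponent matches) is unavailable here, because once an axiom is in $\Theta$ it cannot be withdrawn. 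So you need a mechanism that genuinely exploits the hypothesis $D<_e C$, not just finite injury.

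The paper handles this requirement (called $\mathcal R_e$ there) by a Sacks-style coding strategy rather than a witness diagonalization. It monitors the length of agreement $\hat l_{e,s}=l(\Gamma_{e,s}(D_s),B_s)$ and, for every $x<\hat l_{e,s}$, enumerates the axiom $\langle\langle 3e{+}1,x\rangle,\,C_s\cup\{x\}\rangle$ into $\Theta$. If $\Gamma_e(D)=B$ then $\hat l_{e,s}$ is unbounded over good stages, so column $[3e{+}1]$ of $B$ equals $C$ up to the finitely many elements contributed by higher-priority $\mathcal N$-strategies; hence $C\leq_e B=\Gamma_e(D)\leq_e D$, contradicting $D<_e C$. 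This simultaneously shows $\mathcal R_e$ is met and that its column is finite (since $\hat l_{e,s}$ must be bounded at good stages), which then slots into the inductive argument for $\mathcal N_e$ exactly as you describe. Replacing your witness-based $\mathcal D_e$ with this coding makes the rest of your outline go through.
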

\begin{proof}
We now have two cases, either $A \not \leq_e C$ in which case we just omit the $\mathcal N$ requirement below, or $A \leq_e C$ in which case we keep the $\mathcal N$ requirement. So suppose we are in the latter (harder) case. 

Let $\{A_s\}_s$, $\{D_s\}_s$ and $\{B_{n,s}\}_s$ be correct approximations to $A$, $D$ and $\Gamma_{f(n)}(C)$ respectively, with respect to a good approximation $\{C_s\}$ to $C$. We will build an enumeration operator $\Theta$ so that $B = \Theta(C)$ will meet the requirements: 
\begin{align*}
  \mathcal{N}_e&: A \neq \Gamma_e(B), \\
  \mathcal{P}_e&: \Gamma_{f(e)}(C) =^* B^{[3e]},\\
  \mathcal{R}_e&: \Gamma_e(D) \neq B,\\
  \mathcal{H}_e&: e\in  \Gamma_e(C)\iff \forall s ( B^{[3e+2]}(s) = 1 ).
\end{align*}
At stage $s=0$, let $\Theta = \emptyset$.\\
At stage $s+1$:
\begin{enumerate}
\item 
For $e\leq s$ let  $l_{e,s} = l(A_s, \Gamma_{e,s}(B_s))$. Then for every $x < l_{e,s}$, if $x \in \Gamma_{e,s}(B_s^{[\leq e]} \cup \N^{[>e]})$, pick the least axiom $\langle x, D \rangle \in \Gamma_e$ which witnesses this. Now for every $y \in D^{[> e]}$ enumerate the axiom $\langle y, C_s \rangle$ into $\Theta$. 
\item
For $e\leq s$ and for every axiom $\langle x, D \rangle \in \Gamma_{f(e),s+1}$, enumerate the axiom $\langle \langle 3e,x\rangle, D\rangle$ into $\Theta$.
\item For $e \leq s$ let $\hat l_{e,s} = l(\Gamma_{e,s}(D_s), B_s)$. Then $\forall x<\hat l_{e,s}$ enumerate the axiom $\langle \langle 3e+1,x\rangle, C_s\cup \{x\} \rangle$ into $\Theta$.
\item For $e \leq s$, if $e \in \Gamma_{e,s}(C_s)$, enumerate the axioms $\langle \langle 3e+2,t\rangle , C_s \rangle$ for every $t\in \omega$. 
\end{enumerate}
\textbf{Verification}: Let $G= \{s: C_s \subseteq 0'_e\}$ be the set of good stages. Recall that $A = \Gamma_e(B) \iff \lim_{s\in G} l_{e,s}= \infty$. Observe that by step $(\romannumeral 2)$, we have ensured that $\Gamma_{f(e)}(C) \subseteq B^{[3e]}$.
\begin{lem} Every $\mathcal N_e$ and $\mathcal R_e$ strategy satisfies its requirement and only enumerates finitely many elements into $B$.
\end{lem}
\begin{proof}
    We prove this by induction on $e$. Suppose $\mathcal N_i$ and $\mathcal R_i$ enumerates finitely many elements into $B$ for every $i<e$. Observe that $\Gamma_e(B)\subseteq \Gamma_e(B^{[\leq e]} \cup \mathbb N^{[>e]})$. Suppose $\Gamma_e(B) = A$ and so $\{l_{e,s}\}_{s\in G}$ is unbounded. Now if $x \in \Gamma_e(B^{[\leq e]} \cup \mathbb N^{[>e]})$ witnessed by $\langle x, D \rangle \in \Gamma_e$ then $\mathcal N_e$ will dump $D^{[>e]}$ into $B$ and so $x \in \Gamma_e(B)$. Therefore when $\Gamma_e(B) = A$, we have $\Gamma_e(B) = \Gamma_e(B^{[\leq e]} \cup \N^{[>e]})$.

    Let $3m+i\leq e$ for $i<3$. Then we have the following cases:
    \begin{enumerate}
        \item If $i=0$, then $\mathcal P_m$ and $\mathcal N_{m'}$ for $m'<3m\leq e$ are the only strategies that enumerates elements into $B^{[3m]}$ and $\mathcal P_m$ is successful in coping in $\Gamma_{f(e)}(C)$ into this column up to finite difference by the induction hypothesis.
        \item If $i=1$, then $\mathcal R_m$ and $\mathcal N_{m'}$ for $m' < 3m+1\leq e$ are the only strategies that enumerate elements into $B^{[3m+1]}$. By the induction hypothesis, this column is then finite.
        \item If $i=2$, then again the $\mathcal N_{m'}$ for $m'<3m+2 \leq e$ contribute only finitely many elements. The only other requirement which can act is $\mathcal H_m$, and this either makes the entire column $B^{[3m+2]} = \omega$ or does not enumerate any elements at all. So this column is either finite or equals $\omega$.
    \end{enumerate}  
    Now $\{l_{e,s}\}_{s\in G}$ is unbounded $\iff$ $\Gamma_e(B^{[\leq e]} \cup \mathbb N^{[>e]}) = \Gamma_e(B) = A \iff$  $A \leq_e \bigoplus_{i\leq e} B^{[i]} \equiv_e \bigoplus_{i \leq e/3} \Gamma_{f(i)}(C)$. But since $A \not \in \mathcal F$, this is not possible and so $\{l_{e,s}\}_{s\in G}$ must be bounded. Therefore $\mathcal N_e$ is met and it only enumerates finitely many elements into $B$.

    If $\Gamma_e(D) = B$, then $\hat l_{e,s}$ would be unbounded, and $\Theta(C)^{[3e+1]} =^* C$ by the analysis above. But then $D \geq_e C$, a contradiction. Therefore $\Gamma_e(D) \neq B$, and since $\hat l$ is bounded at good stages, $R_e$ only enumerates finitely many elements into $B$
\end{proof}
Therefore since all the $\mathcal N$ requirements enumerate finitely many elements into any column, the $\mathcal P_e$ requirements are successful in ensuring that $B^{[3e]} =^* \Gamma_{f(e)}(C)$.

Lastly to see that the $\mathcal H_e$ requirements are met, observe that $e \in C^\diamond$ precisely when there is a $0$ in the column $B^{[3e+2]}$. But this question is enumeration below the complement of $B$ and so $C^\diamond \leq_e B^\diamond$.
\end{proof}
We can now get a right half which is not high.
\begin{cor}
There are sets $A,B$ with $B' <_e 0''_e$ such that $(A,B)$ is an Ahmad pair.
\end{cor}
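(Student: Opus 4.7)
The plan is to apply Lemma~\ref{lem:boundIncompHigh} with a carefully chosen oracle $C$ whose first jump is strictly below $0''_e$, so that the resulting $B$ inherits this bound through the clause $B^\diamond \equiv_e C^\diamond$.

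First, using the bounding result of Kent and Sorbi~\cite{kentBounding}, I would fix a low (and therefore $\lowww$) join irreducible $\Sigma^0_2$ set $A$. Next, I would produce a $\Sigma^0_2$ set $C$ satisfying $A \leq_e C$, $C$ is $\highh$, and $C' <_e 0''_e$. Such a $C$ exists because the highness hierarchy in the local structure of the enumeration degrees is proper; concretely, one can start from a $\Sigma^0_2$ degree whose second jump equals $0'''_e$ but whose first jump sits strictly below $0''_e$, and then place it above $A$ either by a relativized jump-inversion argument or by joining it with the low $A$ and verifying that both $\highh$-ness and the bound $C' <_e 0''_e$ are preserved under such a join.

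Given $A$ and $C$ with these properties, Theorem~\ref{thm:maingeneralized} supplies a computable $f$ with $\{\Gamma_{f(n)}(C)\}_n = \{Z : Z <_e A\}$. Join irreducibility of $A$ ensures that $\mathcal{F} := \{\Gamma_{f(n)}(C)\}_n$ is closed under join and hence an ideal, and of course $A \notin \mathcal{F}$. I then apply Lemma~\ref{lem:boundIncompHigh} to $\mathcal{F}$ and $C$ to obtain a set $B \leq_e C$ with $A \not\leq_e B$, $X \leq_e B$ for every $X \in \mathcal F$, and $B^\diamond \equiv_e C^\diamond$. The first two conditions are exactly what it takes for $(A,B)$ to be an Ahmad pair; for the jump bound, since $B$ and $C$ are $\Sigma^0_2$ and hence cototal, we get $B' \equiv_e B^\diamond \equiv_e C^\diamond \equiv_e C' <_e 0''_e$.

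The principal obstacle is producing the set $C$: one needs a $\Sigma^0_2$ e-degree sitting above the fixed low join irreducible $A$, whose second jump already reaches $0'''_e$ while its first jump remains strictly below $0''_e$. The remaining steps are routine applications of Theorem~\ref{thm:maingeneralized} and Lemma~\ref{lem:boundIncompHigh} as they already appear in the paper.
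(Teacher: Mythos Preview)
Your overall strategy---pick a $\highh$ non-high $C$ above a low join irreducible $A$, then feed the Ahmad sequence for $A$ relative to $C$ into Lemma~\ref{lem:boundIncompHigh} and read off $B' \equiv_e B^\diamond \equiv_e C^\diamond \equiv_e C' <_e 0''_e$---is exactly the paper's. You have also correctly located the one nontrivial point: arranging $A \leq_e C$ with $C$ $\highh$ and $C' <_e 0''_e$.

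Where your proposal and the paper diverge is in how that point is handled. You fix $A$ first and then try to build $C$ above it, suggesting either ``relativized jump inversion'' or taking $C = C_0 \oplus A$ for some $\highh$ non-high $C_0$. The join route is not safe: in the c.e.\ Turing degrees there are pairs of low degrees whose join is high, so joining a low $A$ to a non-high $C_0$ can produce a high degree, and the bound $C' <_e 0''_e$ is lost. The jump-inversion route may be salvageable but you have not said which theorem you intend to invoke, and pinning down a $\highh$ non-high degree above a \emph{prescribed} low $A$ is not obviously a one-line citation.

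The paper avoids this by reversing the order: it first fixes a $\highh$ non-high c.e.\ Turing degree $C$, uses that every nonzero c.e.\ degree bounds a nonzero low c.e.\ degree to get $D \leq_T C$ low, passes through $\iota$, and only then invokes Kent--Sorbi to find a join irreducible $A \leq_e \iota(D) \leq_e \iota(C)$. This way the containment $A \leq_e C$ comes for free and no join is needed. Your remaining steps (Theorem~\ref{thm:maingeneralized}, Lemma~\ref{lem:boundIncompHigh}, and the cototal jump/skip identification) match the paper verbatim.
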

\begin{proof}
There is a $\highh$ but non-high c.e.\ Turing degree $C$ which bounds a low c.e.\ Turing degree $D$. Then $\iota(C)$ is $\Sigma^0_2$, $\highh$ and not high while $\iota(D)$ is $\Sigma^0_2$ and low since the map $\iota$ respects the jump. Join irreducibility being downwards dense \cite{kentBounding}, there is a non-c.e.\ $A \leq_e \iota(D)$ which is join irreducible. Since $\iota(C) \geq_e A$ and is $\highh$, by Theorem~\ref{thm:maingeneralized} there is an Ahmad sequence for $A$ relative to $\iota(C)$. Then by Lemma~\ref{lem:boundIncompHigh}, there is a $B \leq_e \iota(C)$ such that $(A,B)$ is an Ahmad pair. This $B$ is $\highh$ and not high.
\end{proof}
Let $A$ be $\lowww$ and join irreducible. Consider the family of $\Sigma^0_2$ sets $\mathcal F_A = \{Y : \exists Z <_e A (Y \oplus Z \geq_e A)\}$. $\mathcal F_A$ is closed upwards, and if $C \in \mathcal{F}_A$, $(A,C)$ is not an Ahmad pair. 

If $C \not \in \mathcal F_A$, consider the ideal generated by $\{Z: Z<_e A\} \cup \{C\}$. This ideal does not contain $A$ and so by Lemma~\ref{lem:boundIncompHigh} there is a $B \geq_e C$ such that $(A,B)$ is an Ahmad pair. Therefore $\{ B : (A,B) \;\text{is an Ahmad pair}\}$ is cofinal and closed upwards in $\overline {\mathcal{F}_A}$.

If $(A,B)$ is an Ahmad pair then $A \oplus B >_e B$ and for every $C$ in the interval $[B,A\oplus B) = \{Z : B \leq_e Z <_e A\oplus B\}$, $(A,C)$ is an Ahmad pair. Therefore there is no maximal right half. Moreover every interval has a join reducible element, and so there is a join reducible right half.

By Lemma~\ref{lem:relSeq}, if $(A,B)$ is an Ahmad pair, then $A$ has an Ahmad sequence relative to $B$. Then by Lemma~\ref{lem:boundIncompHigh}, there is a $C \leq_e B$ such that $(A,C)$ is an Ahmad pair and $B \not \leq_e C$. Therefore there is no minimal right half either. 
\begin{cor}
For every Ahmad pair $(A,B)$, there is a $C <_e B$ such that $(A,C)$ is an Ahmad pairs.
\end{cor}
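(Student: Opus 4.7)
The plan is to combine Lemma~\ref{lem:relSeq} with Lemma~\ref{lem:boundIncompHigh} along the lines foreshadowed in the paragraph immediately preceding the statement.

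Since $(A,B)$ is an Ahmad pair, Corollary~\ref{cor:pairiff} gives that $A$ is $\lowww$ and join irreducible, so the family $\mathcal F := \{Z : Z <_e A\}$ is an ideal that does not contain $A$; and Theorem~\ref{thm:righthigh2} gives that $B$ is $\highh$. By Lemma~\ref{lem:relSeq}, $\mathcal F$ has a uniform enumeration relative to $B$: we can fix a computable $f$ with $\mathcal F = \{\Gamma_{f(n)}(B)\}_n$.

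Next, because $B$ is $\highh$ it is in particular nontrivial, so by the downward density of the $\Sigma^0_2$ e-degrees I fix some auxiliary $D$ with $0_e <_e D <_e B$. I then apply Lemma~\ref{lem:boundIncompHigh} with $B$ playing the role of its input set, with $\mathcal F$ as the ideal (which, crucially, does not contain $A$), and with this $D$ as the auxiliary set $<_e B$. The lemma outputs a $\Sigma^0_2$ set $C$ such that $C \leq_e B$, $A \not\leq_e C$, $X \leq_e C$ for every $X \in \mathcal F$, and $C^\diamond \equiv_e B^\diamond$, together with the moreover conclusion that $B \not\leq_e C$ obtained from the diagonalization against $D$. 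The first three of these properties say that $(A,C)$ is an Ahmad pair, while the last combined with $C \leq_e B$ yields $C <_e B$, as required.

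The substantive work has already been packaged into Lemma~\ref{lem:relSeq} and Lemma~\ref{lem:boundIncompHigh}, so the proof amounts to setting these up and applying them; the main obstacle I expect is in reading off the strict inequality $C <_e B$ from Lemma~\ref{lem:boundIncompHigh}, i.e.\ confirming that its moreover clause together with the coding of $\mathcal F$ and of $B^\diamond$ into $C$ really rules out $B \equiv_e C$. Once that is verified, the corollary follows immediately.
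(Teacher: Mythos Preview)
You correctly flag the strict inequality $C <_e B$ as the delicate point, and indeed your resolution of it does not work. In the lemma's own notation, $C$ is the ambient input and $B$ is the constructed output; the moreover clause says that, given $D <_e C$, one can arrange $B \not\leq_e D$. Translated into your variables (input $=B$, output $=C$), this becomes: given $D <_e B$, one can arrange $C \not\leq_e D$. Your auxiliary $D$ is just some nonzero degree strictly below $B$, and the conclusion $C \not\leq_e D$ is perfectly compatible with $C \equiv_e B$. So the step ``the moreover conclusion that $B \not\leq_e C$ obtained from the diagonalization against $D$'' is unjustified: the $\mathcal R_e$ requirements in the proof of Lemma~\ref{lem:boundIncompHigh} diagonalize the \emph{output} against a fixed lower set $D$, not the \emph{input} against the output.

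The simplest repair is to invoke the lemma with its parameter ``$A$'' taken to be the original $B$ rather than the original $A$. Since $B$ is $\highh$ while $A$ is $\lowww$, we have $B \not\leq_e A$, so $B \notin \mathcal F$ and the hypothesis of the lemma is met; the $\mathcal N_e$ requirements then directly give $B \not\leq_e C$, hence $C <_e B$. The remaining condition $A \not\leq_e C$ needed for the Ahmad pair now comes for free: $C \leq_e B$ together with $A \not\leq_e B$ forces $A \not\leq_e C$. (Equivalently, one can run the $\mathcal N$-style diagonalization simultaneously for $A$ and for $B$; the verification in the lemma's proof goes through unchanged because neither $A$ nor $B$ lies in $\mathcal F$.) The paper's one-sentence argument preceding the corollary is terse on exactly this point and should be read in this way.
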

Using the ideas of this section, we get a $\Pi_3$ definition of $\highh$ in the local structure in the language $\{\leq_e\}$. The defining formula below says that a degree is $\highh$ if everything above it which bounds a left half also bounds a right half.
\begin{thm}\label{thm:defhigh2}
$X$ is $\highh$ if and only if $ \forall Y \geq_e X \forall A \leq_e Y(\exists B \not \geq_e A \forall Z <_e A( Z\leq_e B)) \implies (\exists C \leq_e Y, C \not \geq_e A \forall Z <_e A (Z\leq_e C))$.
\end{thm}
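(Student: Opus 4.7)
The plan is to prove both directions using the machinery of Theorem~\ref{thm:maingeneralized}, Lemma~\ref{lem:boundIncompHigh}, Corollary~\ref{cor:pairiff}, and Lemma~\ref{lem:pairCone}. The forward direction is almost immediate, and the backward direction reduces to a short jump-class computation after applying the formula to one carefully chosen pair $(Y, A)$.

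For $(\Rightarrow)$, I suppose $X$ is $\highh$ and fix arbitrary $Y \geq_e X$ and $A \leq_e Y$ satisfying the premise that $A$ is the left half of an Ahmad pair. Corollary~\ref{cor:pairiff} then makes $A$ both $\lowww$ and join irreducible, and monotonicity of the enumeration jump (so $\highh$-ness is upward closed) makes $Y$ $\highh$ as well. Theorem~\ref{thm:maingeneralized} applied to $A \leq_e Y$ provides an Ahmad sequence for $A$ relative to $Y$; because $A$ is join irreducible, the family $\{Z : Z <_e A\}$ is an ideal with a uniform enumeration relative to $Y$ that does not contain $A$, so Lemma~\ref{lem:boundIncompHigh} yields the required $C \leq_e Y$ with $A \not\leq_e C$ and every $Z <_e A$ below $C$.

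For $(\Leftarrow)$, I fix a low non-c.e.\ join irreducible $\Sigma^0_2$ degree $A_0$: take any low non-c.e.\ $\Sigma^0_2$ degree $L$ (e.g., the $\iota$-image of a low non-c.e.\ c.e.\ Turing degree from Friedberg--Muchnik) and apply Kent and Sorbi's \cite{kentBounding} bounding result to obtain a non-c.e.\ join irreducible $A_0 \leq_e L$, which is low by jump monotonicity and hence $\lowww$. By Corollary~\ref{cor:pairiff}, $A_0$ is a left half. Apply the assumed formula with $Y := X \oplus A_0$ and $A := A_0$: the premise is satisfied, producing $C \leq_e X \oplus A_0$ with $(A_0, C)$ an Ahmad pair, and Lemma~\ref{lem:pairCone} then makes $C$ (hence also $X \oplus A_0$) $\highh$. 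A short jump calculation finishes: from $(X \oplus A_0)'' \equiv_e X'' \oplus A_0''$ we get $X'' \oplus A_0'' \geq_e 0'''_e$; lowness of $A_0$ gives $A_0'' \leq_e 0''_e$, while $0_e \leq_e X$ plus jump monotonicity gives $X'' \geq_e 0''_e$, so $A_0'' \leq_e X''$ and $X'' \oplus A_0'' \equiv_e X''$, whence $X'' \geq_e 0'''_e$ and $X$ is $\highh$. The only delicate point is the jump identity $(X \oplus A_0)'' \equiv_e X'' \oplus A_0''$, which unpacks from $K_{X \oplus A_0} \equiv_e K_X \oplus K_{A_0}$ and the definition of the jump; everything else is a direct assembly of earlier results.
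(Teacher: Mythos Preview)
Your forward direction is correct and essentially the paper's argument spelled out in more detail.

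The backward direction has a genuine gap: the claimed identity $(X \oplus A_0)'' \equiv_e X'' \oplus A_0''$ is false in general, and your justification of it is flawed. While $K_{X \oplus A_0} \equiv_e K_X \oplus K_{A_0}$ is true, enumeration equivalence does \emph{not} pass to complements (enumeration reducibility is positive), and the jump/skip involves $\overline{K}$. A concrete failure: take low c.e.\ Turing degrees $a,b$ with $a \oplus b \equiv_T 0'$ (Sacks splitting); then $\iota(a),\iota(b)$ are low $\Sigma^0_2$ e-degrees whose join is $0'_e$, so $(\iota(a)\oplus\iota(b))'' \equiv_e 0'''_e$ while $\iota(a)''\oplus\iota(b)'' \equiv_e 0''_e$. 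Thus there is no reason why a fixed low $A_0$, chosen independently of $X$, should satisfy ``$X\oplus A_0$ is $\highh$ $\Rightarrow$ $X$ is $\highh$''.

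This is exactly the obstacle the paper's proof addresses by invoking $\mathcal K$-pairs. Given $X$ not $\highh$, the paper chooses a low $A$ \emph{depending on $X$}: take a $\mathcal K$-pair $(A,B)$ below $0'_e$ with $A,B \not\leq_e X$. The $\mathcal K$-pair property yields $(A\oplus X)' \leq_e A \oplus B \oplus X' \leq_e X'$, so in fact $(A\oplus X)' \equiv_e X'$ and hence $(A\oplus X)'' \equiv_e X''$; in particular $Y = A\oplus X$ is not $\highh$. Now $Y$ bounds a low join irreducible degree below $A$ (a left half by Corollary~\ref{cor:pairiff}), but cannot bound any right half for it since right halves are $\highh$. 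Your strategy of instantiating the formula at $Y = X \oplus (\text{something low})$ is the right shape; what you are missing is the mechanism (here, $\mathcal K$-pairs) that forces the join to stay non-$\highh$.
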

\begin{proof}
If $X$ is $\highh$ and $ Y \geq_e X$, then $Y$ is $\highh$. Therefore if $Y$ bounds a left half, by Theorem~\ref{thm:maingeneralized}, it also bounds a right half corresponding to that left half. 

In his seminal work, Kalimullin \cite{kPairs} introduced the notion of $\mathcal K$-pairs to define the jump operator. Fix a non-c.e.\ set $X$, and let $(A,B)$ be a $\mathcal K$-pair below $0'_e$ with $A, B \not \leq_e X$. Then $A$ is low and $(A\oplus X)' \leq_e A\oplus B \oplus X'$ by the properties of $\mathcal K$ pairs in \cite{kPairs}. But  $A\oplus B \oplus X'\leq_e X'$ since $A$ and $B$ are $\Sigma^0_2$, and so $(A\oplus X)' \leq_e X'$. 

Therefore if $X$ is not $\highh$, there is a set $A$ which is low such that $A\oplus X$ is not $\highh$. Taking $Y = X\oplus A$, we know that $Y$ bounds a left half as it bounds a low join irreducible degree \cite{kentBounding}, and such degrees are left halves by Corollary~\ref{cor:pairiff}. However $Y$ cannot bound a right half by Theorem~\ref{thm:maingeneralized}.
\end{proof}

In this paper, we have separated the right and left half in terms of jump classes. It would also be interesting to know if there is a separation based on join irreducibility. The authors in \cite{kentBounding} show that join irreducibility is downwards dense and ask whether every interval contains a join irreducible degree. A positive answer would also give a positive answer to the question below.

\begin{query} Is there an Ahmad pair $(A,B)$ such that $B$ is join irreducible?
\end{query}

Beyond downwards density, the distribution of join irreducible degrees in the local structure is unknown. Towards a negative answer to the question in \cite{kentBounding} we ask:
\begin{query} Is every join irreducible degree quasiminimal?
\end{query}
The constructions of a right half using an Ahmad sequence in this paper does not seem compatible with making the right half total.
\begin{query} Is there an Ahmad pair $(A,B)$ such that $B$ is total?
\end{query}
While we obtain a characterization of the left halves, a similar characterization of the right halves remains elusive.
\begin{query}
For every $\highh$ $B$ which bounds a $\lowww$ degree, is there an $A$ such that $(A,B)$ is an Ahmad pair?
\end{query}

\section{\texorpdfstring{A Proper Hierarchy of Ahmad $n$-Pairs}{A Proper Hierarchy of Ahmad Pairs}}
 Ahmad and Lachlan \cite{ahmad} build a low join irreducible degree. The authors in \cite{extensions} extend this to build a join reducible degree which is the left half of an Ahmad $2$-pair. In this section we extend these ideas to show that for every $n\geq 1$ there is a low $n$-join irreducible set which is $(n-1)$-join reducible (note that vacuously every set is $0$-join reducible). As a corollary, this implies that there are Ahmad $n$-pairs whose left half cannot be part of an Ahmad $(n-1)$-pair for $n>1$.
    
\begin{thm}\label{thm:nIrrConst}
   For every $n\geq1$ there is a set $A$ which is low, $n$-join irreducible and $(n-1)$-join reducible.
\end{thm}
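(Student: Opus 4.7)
The plan is a direct $0'''$-priority construction on a tree, generalizing the $n=2$ construction of \cite{extensions}. I will build $A$ in $n$ coding columns $A = A^{[0]} \oplus \cdots \oplus A^{[n-1]}$ and take as the witnesses of $(n-1)$-join reducibility the sets
\[
X_i \;:=\; \bigoplus_{j \neq i} A^{[j]}, \qquad i < n.
\]
Automatically $X_i \oplus X_j \equiv_e A$ for $i \neq j$, since together they contain every column, so on the $X_i$ side I only need $X_i <_e A$, equivalently $A^{[i]} \not\leq_e X_i$. This is enforced by the finitary column non-triviality requirements
\[
\mathcal S_{i,e}:\ \Phi_e(X_i) \neq A^{[i]}.
\]

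For $n$-join irreducibility, Theorem~\ref{thm:charirr} together with Lemma~\ref{lem: idealcover} reduces the task to arranging that $\{Z : Z <_e A\} = \bigcup_{i<n}\mathcal F_i$, where $\mathcal F_i = \{Z : Z \oplus X_i <_e A\}$. Phrased as a diagonalization, for each operator $\Delta$ and each $n$-tuple $\vec\Gamma = (\Gamma_0,\dots,\Gamma_{n-1})$ I must meet
\[
\mathcal C_{\Delta,\vec\Gamma}:\ \Delta(A) <_e A \;\Longrightarrow\; (\exists i < n)\ \Gamma_i(\Delta(A) \oplus X_i) \neq A.
\]
Together with standard lowness requirements $\mathcal L_e$ forcing $A' \leq_e 0'_e$ via correct-approximation preservation, these are the complete requirements.

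The strategy for $\mathcal C_{\Delta,\vec\Gamma}$ runs $n$ parallel Lachlan-style diagonalization threads, one per index $i$. Thread $i$ picks a witness $x_i$ targeted for column $A^{[i]}$, enumerates $x_i \in A^{[i]}$, and waits at good stages of $A$; if $\Gamma_i$ responds by enumerating $x_i$ from $\Delta(A) \oplus X_i$ via some axiom $\langle x_i, E \oplus F \rangle$, thread $i$ retracts $x_i$, records the restraint imposed by $E$ on $\Delta(A)$ and by $F$ on $X_i$, and selects a fresh, larger witness. Success on any single thread (a witness that is never retracted) satisfies the requirement.

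The hard part, as in \cite{extensions} for $n=2$, is the outcome in which every thread retracts cofinally. Here the goal is to show that the accumulated axioms contributed by all threads combine, modulo the recorded restraints, into a uniform procedure enumerating $A$ from $\Delta(A)$, contradicting $\Delta(A) <_e A$. Beyond the $n=2$ case the truly new content is combinatorial: each $\mathcal C$-node branches into the various combinations of per-thread success/cofinal-retraction outcomes, and the $A$-from-$\Delta(A)$ reconstruction must simultaneously weave together the $n$ families of axioms while respecting the growing inter-column restraints; the pigeonhole effect of running $n$ independent diagonalizations is what prevents any single column from being preserved forever. The interaction with the $\mathcal L_e$ and $\mathcal S_{i,e}$ strategies is handled by the standard true-path and restraint reasoning, and verification along the true path then yields a low $A$ which is $n$-join irreducible and $(n-1)$-join reducible.
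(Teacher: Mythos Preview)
Your reduction of $n$-join irreducibility to the covering $\{Z : Z <_e A\} = \bigcup_{i<n}\mathcal F_i$ is circular. Lemma~\ref{lem: idealcover} says $A$ is $n$-join irreducible iff $\{Z:Z<_e A\}$ is a union of $n$ \emph{ideals}, and the sets $\mathcal F_i = \{Z : Z\oplus X_i <_e A\}$ are not automatically ideals: the proof that they are (in Lemma~\ref{lem: idealcover} and Theorem~\ref{thm:charirr}) already \emph{uses} $n$-join irreducibility. Concretely, $Z_0\oplus X_i<_e A$ and $Z_1\oplus X_i <_e A$ do not in general imply $Z_0\oplus Z_1\oplus X_i <_e A$. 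Your $\mathcal C$-requirements, taken together, only yield the inclusion $\{Z:Z<_e A\} \subseteq \bigcup_i\mathcal F_i$, which is strictly weaker. The case $n=1$ makes this vivid: there $X_0$ is the empty join, $\mathcal F_0 = \{Z:Z<_e A\}$ tautologically, and every $\mathcal C_{\Delta,\Gamma_0}$ is vacuous (if $\Delta(A)<_e A$ then trivially $\Gamma_0(\Delta(A))\neq A$); your requirements degenerate to ``$A$ is low and not c.e.'' and do not force join irreducibility at all. For $n\ge 2$ the same defect persists: two sets landing in the same $\mathcal F_i$ by pigeonhole need not have their join in $\mathcal F_i$.

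The paper sidesteps this by attacking the definition directly. For each tuple $\langle f,e\rangle$ naming $n{+}1$ sets $A_{f_0},\dots,A_{f_n}\leq_e A$ together with putative reductions $\Gamma_{e_{ij}}$ witnessing $A_{f_i}\oplus A_{f_j}\equiv_e A$, the requirement $\mathcal P^e_f$ builds, for each column $k<n$, operators $\Delta^k_0,\dots,\Delta^k_n$ aiming at $X_k =^* \Delta^k_i(A_{f_i})$. The diagonalization witnesses are the existing $\mathcal N$-witnesses; when such a witness in column $k$ is extracted from $A$, at most \emph{one} of the $n{+}1$ sets $A_{f_i}$ can still believe it is present (any two would re-join to enumerate it via some $\Gamma_{e_{ij}}$), so each witness gets an ``outcome'' in $\{-1,0,\dots,n,\infty\}$. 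A backward-dumping mechanism then forces cofinitely many column-$k$ witnesses to share a single outcome $i_k$, whence $X_k\leq_e A_{f_j}$ for all $j\neq i_k$; pigeonhole on $n$ columns against $n{+}1$ targets now gives some $A_{f_j}\geq_e A$. This is genuinely a statement about $n{+}1$ arbitrary sets below $A$, not about your fixed $X_i$, and does not factor through the covering you wrote down.
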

\begin{proof}
    We build independent sets $X_0,X_1,\dots,X_{n-1}$ and let $A = \bigoplus_{i< n} X_i$. Let $Y_i \subseteq A$ be defined by $Y_i^{[j]} = X_j$ for $i \neq j< n$ and $Y_i^{[i]} = \emptyset$ where slightly abusing notation, $Y_i^{[j]}$ refers to the $j^{th}$ of $n$ columns. To make the $X_i$'s independent, we will ensure that $X_i \not \leq_e Y_i$ for every $i<n$. Then $A$ is $(n-1)$-join reducible since  $Y_i <_e A$ for every $i<n$ but $Y_i \oplus Y_j \geq_e A$ for any $i\neq j$. We construct $A$ to meet the requirements:
    \begin{align*}
        \mathcal N_k^i &: X_i\neq \Gamma_k(Y_i),\\
        \mathcal L_k^x &: \exists^\infty s\; x \in \Gamma_{k,s}(A) \implies x \in \Gamma_k(A).
    \end{align*}
     For the requirement below let $A_n$ denote $\Gamma_n(A)$. Then, for every $f = \langle f_0,\dots,f_n \rangle \in \omega^{n+1}$, $e = \langle e_{ij} \rangle_{i<j\leq n} \in \omega^{\binom{n+1}{2}}$, we require $A$ to satisfy:
    \begin{align*}    
        \mathcal P_f^e &: \forall i <j \leq n  ( A = \Gamma_{e_{ij}}(A_{f_i} \oplus A_{f_j})) \implies \exists k \leq n  ( A \leq_e A_{f_k}).
    \end{align*}
    This directly ensures that $A$ is $n$-join irreducible. To simplify the construction, we modify $\mathcal P^e_f$ to prove something slightly stronger: If each pair of the $A_{f_i}$'s join up to $A$, then we will ensure that every $X_i$ is below at least $n$ of the $A_{f_i}$'s.
    
    In other words for every $i$, there is at most one $j$ such that $X_i \not \leq_e A_{f_j}$. Then $A = \bigoplus_{i<n} X_i$ must, by the pigeon hole principle, be below at least one of the $n+1$ $A_{f_i}$'s. The precise requirement is given below:
    \begin{align*}
        \mathcal P_f^e &: \forall i<j \leq n ( A = \Gamma_{e_{ij}}(A_{f_i} \oplus A_{f_j})) \implies \\
        &\forall k<n \exists i_k \leq n\;  (X_k \leq_e A_{f_j} \; \text{for every } j \neq i_k).
    \end{align*}
    
    We build $A$ as a $\Delta^0_2$ set with $A_s$, our approximation to $A$ at stage $s$. We permanently dump certain elements into $A$ during the construction; let $\hat A$ be this c.e.\ set of dumped elements. Moreover let $\hat X_i$ for $i<n$ be defined by $\hat A = \bigoplus_{i<n} \hat X_i$. Let $\{x_{k,s}^i\}_k$ enumerate $\omega-\hat X_{i,s}$ in increasing order. For a fixed $i$, we think of the $x_{k,s}^i$ as movable markers covering $\hat X_{i,s}$'s complement. During the construction, we sometimes focus on the marker while at other times, the number the marker is currently at.
    
    Let $x^i_k = \lim_s x^i_{k,s}$ and $B_i = \{x_k^i : k \in \omega\}$ be the set of `true witnesses' of the $\mathcal N^i_k$ requirements, so $B_i = \omega - \hat X_i$. Let $B=\bigoplus_{i<n} B_i$. Note that $A \leq_e B\cap A$ since $\hat A$ is c.e.\ and for every $i$, $X_i \leq_e B_i \cap X_i$ since $\hat X_i$ is c.e.
    
    Slightly abusing notation, we use $\angles{i}{m}$ to denote position $m$ in the $i$th member of a join of $n$ sets. So, $x^i_k \in X_i$ is equivalent to $\angles{i}{x^i_k} \in A$. We call $\angles{i}{x^i_k}$ the witness of $\mathcal N^i_k$. Note that during the construction, changes made to $A$ implicitly change the $X_i$'s and vice versa.

    We now describe the construction of $A$. Order the requirements in order type $\omega$ as $S_0, S_1, S_2,\dots$ such that, for every $i <n$ and $k \in \omega$, if $S_\alpha = \mathcal N^i_k$ and $S_\beta = \mathcal N^i_{k+1}$, then $\alpha < \beta$. We let $S_\alpha < S_\beta$ mean $S_\alpha$ has higher priority than $S_\beta$.
    
    At $s = 0$ let $A_s = \emptyset$ and $\hat A_s = \emptyset$. Moreover all the markers $x^i_k$ are inactive for every $i<n$ and $k\in \omega$.
    
    At stage $s+1$, for every $i<n$, activate the marker $x^i_{s}$. Then for each $t\leq s$, we have a substage where $S_t$ gets to act. Our approximations are dynamically updated at each substage of stage $s+1$. $S_t$ acts as follows depending on its type:
    \begin{enumerate}
    \item $S_t =\mathcal N_k^i$ : If $x_{k,s+1}^{i} \not \in \Gamma_{k,s}(Y_{i,s})$, put $x_{k,s+1}^i$ into $X_{i,s+1}$. 
    If $x_{k,s+1}^{i} \in \Gamma_{k,s}(Y_{i,s})$, extract $x^{i}_{k,s+1}$ out of $X_{i,s}$ and dump all currently active witnesses of $\mathcal N$ requirements of lower priority than $\mathcal N^i_k$ into $A$. Observe that pulling $x^i_{k,s+1}$ out of $X_{i,s}$ will not affect $\Gamma_k(Y_{i,s+1})$.
    
    \item $S_t=\mathcal L_k^x$ : If $x \in \Gamma_{k,s+1}(A_{s+1})$ with $\langle x, D \rangle$ the oldest axiom witnessing this i.e.\ the one that has been valid the longest, then dump the elements of $D$ into $\hat A_{s+1}$ except for witnesses of $\mathcal N < \mathcal L^x_k$ requirements. Do this at most once per axiom.
    
    \item $S_t=\mathcal P^e_f$ : It builds a local copy of operators $\Delta^k_i$ for $k <n$ and $i<n+1$, and initializes them all to be $\emptyset$. The goal is to have $X_k =^* \Delta_i^k(A_{f_i})$.

    For every $k$ and for every $x \in \hat X_{k,s}$, enumerate the axiom $\angles{x}{\emptyset}$ into $\Delta^k_i$ for $i\leq n$. Note that such $x$'s will never leave $X_k$.
    
    Let the outcome of marker $x^k_p$ at stage $s+1$ for $\mathcal P^e_f$ be denoted by $O^e_{f,s+1}(x^k_{p})$. The outcomes of all active markers relative to $\mathcal P^e_f$ are initially set to $\infty$ when first activated. We will see that a number will only change its outcome once. If a marker inherits the number of another marker, it also inherits the corresponding outcome.
    \begin{enumerate}
    \item If $\mathcal N^k_p > \mathcal P^e_f$ and some active witness $\langle k, x^k_{p,s+1}\rangle \in A_{s+1}$ as well as $\langle k, x^k_{p,s+1}\rangle \in \Gamma_{e_{ij},s+1}(A_{f_i,s+1} \oplus A_{f_j,s+1})$ for $i<j \leq n$, then enumerate axioms $\angles{x^k_{p,s+1}}{A_{f_i,s+1}}$ into $\Delta^k_i$ for $i \leq n$. Then dump all currently active witnesses for $\mathcal N > \mathcal N^k_p$ requirements into $\hat A_{s+1}$. 
    
    Therefore if $\langle k, x^k_{p,s+1} \rangle \not \in \hat A$, then $\langle k, x^k_{p,s+1}\rangle$ can leave $\Gamma_{e_{ij}}(A_{f_i} \oplus A_{f_j})$ only due to $\langle k, x^k_{p,s+1}\rangle$ leaving $A$ (a witness of higher priority leaving $A$ would cause $x^k_{p,s+1}$ to get dumped, while lower priority witnesses have already been dumped, and so cannot leave). Never enumerate an axiom in part (a) into a (particular local) $\Delta^k_i$ for the number $x^k_{p,s+1}$ ever again. 
    
    \item Suppose $\mathcal N^k_p > \mathcal P^e_f$ and some active witness $x^k_{p,s+1}$ had an axiom enumerated for it in part (a) at some stage $u<s$, but $\langle k, x^k_{p,s+1}\rangle\not \in A_{s+1}$ and $\langle k, x^k_{p,s+1}\rangle \not \in \Gamma_{e_{ij},s+1}(A_{f_i,s+1} \oplus A_{f_j,s+1})$ for $i<j\leq n$. Since the computations of the $\Gamma_{e_{ij}}$'s have changed, for every $i, j$ with $i\neq j$ it cannot be the case that $A_{f_i,u}\subseteq A_{f_i,s+1}$ and $A_{f_j,u}\subseteq A_{f_j,s+1}$. Therefore there is at most one $i\leq n$ such that $x^k_{p,s+1} \in \Delta^k_i(A_{f_i,s+1})$.
    
    Let $O^e_{f,s+1}(x^k_p) =i$, if $x^k_{p,s+1}\in \Delta_i^k(A_{f_i,s+1})$. If there is no such $i$, let $O^e_{f,s+1}(x^k_p) =-1$. Note that if $O^e_{f,s+1}(x^k_p) =i$, then $\Delta^k_j(A_{f_{j,s+1}})(x^k_{p,s+1}) = X_k(x^k_{p,s+1})$ for $0\leq j\leq n$, $j\neq i$. 
    
    Therefore to meet this requirement, we need to ensure two outcomes $i\neq j$ cannot occur infinitely often. To accomplish this we `backward dump' as follows: Dump into $\hat{X}_k$ every $x^k_{q,s+1}$ for $q<p$ with $\mathcal N^k_q> \mathcal P^e_f$ and such that: 
    \begin{enumerate}
        \item $x^k_q$ has higher outcome - $O^e_{f,s+1}(x^k_q) >i$.
        \item We believe that we are in the cofinite tail for higher priority $\mathcal P$ requirements - For every $\mathcal P^{e'}_{f'} < \mathcal P^e_f$, we have $O^{e'}_{f',s+1}(x^k_p) = O^{e'}_{f',s+1}(x^k_q)$. We will see during the verification that this will be sufficient as we will ensure all $\mathcal P$ requirements will have cofinitely many witnesses with the same outcome.
    \end{enumerate}
    This ends the construction.
    \end{enumerate}
    \end{enumerate}
    \begin{lem}
        For every $i<n$ and $k\in \omega$, the marker $x^i_k$ settles down, i.e.\ $x^i_k = \lim_s x^i_{k,s}$ exists, and the limit $\angles{i}{x^i_k}$ moves in/out of $A$ only finitely many times. 
    \end{lem}
    \begin{proof}
    We prove the claim by induction. Suppose every marker $x$ of $\mathcal N < \mathcal N^i_k$ settles down and enters or exits $A$ finitely often. Let $s$ be a large enough stage such that all such markers have settled down and will never enter or exit $A$ again. The marker $x^i_k$ can only be moved by requirements of higher priority than $\mathcal N_k^i$ when they dump elements into $\hat A$. Let $S_t$ be such a higher priority requirement:
    \begin{enumerate}
        \item $S_t = \mathcal N^j_e$: After stage $s$, $S_t$ cannot move $x^i_k$ since its own marker has settled and no longer enters/exits $A$.
        \item $S_t = \mathcal L_e^x$: Suppose after stage $s$, $S_t$ moves the marker $x^i_k$, by dumping elements into $\hat A$ to keep $x \in \Gamma_e(A)$. This requirement is then permanently satisfied and will never act again: No witnesses of $\mathcal N <S_t$ requirements will move their witness out of $A$ after stage $s$, while $S_t$ can dump in all the other witnesses that it needs to keep $x$ in $\Gamma_e(A)$. Therefore after stage $s$, $\mathcal L_e^x$ moves marker $x^i_k$ at most once.
        \item $S_t = \mathcal P^e_f:$ By hypothesis, after stage $s$ we have the following cases.
        \begin{enumerate}
            \item Let $y$ be a witness of $\mathcal N < \mathcal N^i_k$. Suppose there is a $u>s$ such that, $y$ is in $A_u$ and each of $\Gamma_{e_{ij},u}(A_{f_i,u} \oplus A_{f_j,u})$ for $i<j \leq n$, and this is the first time $y$ is in all of these sets.
            
            Then $\mathcal P^e_f$ will dump the currently active witnesses of $\mathcal N' > \mathcal N$ requirements. Therefore, by induction hypothesis, there is no $\mathcal N'$ requirement such that $\mathcal N < \mathcal N' < \mathcal N^i_k$. Hence $\mathcal P^e_f$ will only move $x^i_k$ at most once after stage $s$, for the sake of witnesses of $\mathcal N<\mathcal N^i_k$ requirements.
            \item Suppose there is a stage $u>s$ where for some $l > 0$, $O^e_{f,u}(x^i_{k}) >O^e_{f,u}(x^i_{k+l}) $. If $\mathcal P^e_f$ `backward dumps' $x^i_{k,u}$, then we argue below that the outcome of marker $x^i_k$ can only decrease, i.e.\ $O^e_{f,u+1}(x^i_{k})\leq O^e_{f,u}(x^i_{k+l})$.

            Let $\mathcal P_0,\dots \mathcal P_\alpha$ enumerate all the $\mathcal P$ requirements of higher priority than $\mathcal P^e_f$ in decreasing order of priority, so $\mathcal P_0 < \dots < \mathcal P_\alpha$. Then the outcomes of the markers $\{x^i_n\}_{n <u}$ with respect to $\mathcal P_0$ at stage $u$ must be weakly increasing (as a consequence of backward dumping). Moreover markers $x^i_k$ and $x^i_{k+l}$ must have the same outcome for $\mathcal P_0$ at stage $u$, let $n_0, m_0$ be the interval of markers $\{x^i_n\}_{n_0 \leq n \leq m_0}$ which have this outcome for $\mathcal P_0$ at stage $u$.

            Now the outcomes of $\{x^i_n\}_{n_0 \leq n \leq m_0}$ for $\mathcal P_1$ must be weakly increasing as $\mathcal P_1$ is free to backward dump in this interval without being blocked by $\mathcal P_0$. So we shrink our interval to $n_1 \geq n_0$ and $m_1 \leq m_0$ such that the outcomes of $\{x^i_n\}_{n_1 \leq n \leq m_1}$ are all the same for $\mathcal P_1$ at stage $u$, and this outcome is equal to the outcomes of $x^i_k$ and $x^i_{k+l}$ for $\mathcal P_1$ at stage $u$.

            Proceeding this way, we get a non-decreasing sequence $n_0\leq n_1\leq \dots \leq n_\alpha$ and a non-increasing sequence $m_0 \geq m_1 \dots \geq m_\alpha$, such that $n_\alpha \leq k < k+l \leq m_\alpha$. So finally we have $O^e_{f,u+1}(x^i_{k})\leq O^e_{f,u}(x^i_{k+l})$ since the marker $x^i_k$ inherits the outcome of a larger marker, and this outcome is at most $O^e_{f,u}(x^i_{k+l})$ as $\mathcal P^e_f$ is free to `backward dump' markers in $\{x^i_n\}_{n_\alpha \leq n \leq m_\alpha}$ with larger outcomes relative to $\mathcal P^e_f$.

            After finitely much dumping, $\mathcal P^e_f$ in isolation will no longer have cause to move marker $x^i_k$ via `backward dumping' since $O^e_{f}(x^i_{k})$ can only decrease to $-1$ after which it is not longer `backward dumped'. Moreover if a marker has outcome $\infty$, and then moves to step $(3 \romannumeral 2)$, then its outcome is only lowered.
            
            The only trouble occurs if marker $x^i_k$ is moved by another requirement, which then causes its outcome for $\mathcal P^e_f$ to increase. As argued above, after stage $s$, this can happen at most once for each higher priority $\mathcal N$ and $\mathcal L$ requirements, as well as $\mathcal P$ requirements via `forward dumping'. 
            
            On the other hand, we explicitly prohibit backward dumping by $\mathcal P' > \mathcal P^e_f$ requirements if they raise outcomes for $\mathcal P^e_f$ of witnesses. There are only finitely many $\mathcal P' < \mathcal P^e_f$ requirements. We can argue by induction that each of them `backward dumps' $x^i_k$ finitely often and does not increase the outcome of $x^i_k$ for $\mathcal P'' < \mathcal P'$ requirements. So every such $\mathcal P' <\mathcal P^e_f$ will eventually stop moving $x^i_k$ after some finite stage.
        \end{enumerate}  
    \end{enumerate}
    Let $t>s$ be a stage such that the $x^i_k$ marker has settled down. After stage $t$, $\mathcal N^i_k$ can successfully diagonalize: it first puts $\angles{i}{x^i_{k,t}}$ into $A_t$ and if it ever sees $\angles{i}{x^i_{k,t}} \in \Gamma_{k,u}(Y_{i,u})$ for some $u>t$, it can permanently keep it in by dumping lower priority witnesses, while by hypothesis higher priority witnesses will no longer exit $A$. In this case $x^i_{k,u}$ is extracted and never enters $A$ again. 
    \end{proof}
   \begin{lem}
       Every requirement is met.
   \end{lem} 
   \begin{proof}
    Let $x^i_k = \lim_s x^i_{k,s}$ be the `true witness' for $\mathcal N^i_k$, and let $\mathcal O^e_f(x^i_k) = \lim_s O^e_{f,s}(x^i_k)$ be its true outcome. Fix a requirement $S_t$ and let $s$ be a large enough stage that all markers of $\mathcal N< S_t$ requirements have settled down (will no longer be dumped) and the corresponding numbers are in or out of $A$ for the last time. We consider the following cases.
    \begin{enumerate}
    \item $S_t=  \mathcal N_k^i$: Let $u>s$ be large enough such that marker $x^i_{k,u} = \lim_{v} x^i_{k,v}$. Using $x^i_{k,u}$, the diagonalization of $\mathcal N_k^i$ will be successful.
    
    \item $S_t = \mathcal L^x_k$: Suppose $\exists^\infty u (x\in \Gamma_{k,u}(A_u))$. Then for $u>s$ if $\mathcal L_k^x$ dumps elements into $A$ at stage $u$,  then $x \in \Gamma_k(A)$ since none of the non-dumped witnesses will leave $A$ after stage $u$.
    
    \item $S_t = \mathcal P_f^e$: Suppose $\forall i<j \leq n ( A = \Gamma_{e_{ij}}(A_{f_i} \oplus A_{f_j}))$. Fix a $k<n$. We will show that there is an $i \in \{-1,0,\dots,n,\infty\}$ such that $\forall^\infty n \;O^e_f(x^k_n) = i$. To see this, suppose $\mathcal P_f^e$ was the highest priority $\mathcal P$ requirement for which this were false, and let $i$ be the least such that $\exists^\infty n \; O^e_f(x^k_n) = i$. Moreover let $N$ be large enough such that $\{O^{e'}_{f'}(x^k_n)\}_{n>N}$ is constant for every $\mathcal P^{e'}_{f'} < \mathcal P^e_f$. Suppose there were numbers $n_0$, $n_1$ and $n_2$ such that $N < n_0 < n_1 < n_2$ for which $ O^e_{f}(x^k_{n_0}), O^e_{f}(x^k_{n_2}) = i$ while $ O^e_{f}(x^k_{n_1}) = j >i$.  
    
    Let $u>s$ be a large stage such that markers $x^k_{n_0}$, $x^k_{n_1}$ and $x^k_{n_2}$ have settled down and achieved their true outcome by stage $u$ relative to every $\mathcal P \leq \mathcal P^e_f$ requirement (once a marker has settled down, its outcome can change at most once). Then at stage $u$, $\mathcal P^e_f$ would dump $x^k_{n_1,u}$ on behalf of $x^k_{n_2,u}$ as it is not blocked by any $\mathcal P< \mathcal P^e_f$ requirement. But this contradicts the assumption that $x^k_{n_2}$ has settled down by stage $u$. 

    Therefore $\forall^\infty n \;O^e_f(x^k_n) = i$. We can now argue that for $j\neq i, X_k =^* \Delta^k_j(A_{f_j})$. Note that $\hat{X_k} \subseteq X_k, \Delta^k_j(A_{f_j})$ for all $j\leq n$. On the other hand if a `true witness' $\angles{k}{x_e^k} \in A$ and $\angles{k}{x_e^k} \in \Gamma_{e_{ij}}(A_{f_i} \oplus A_{f_j})$ for every $i<j \leq n$, then $x_e^k \in \Delta^k_j(A_{f_j})$ for every $j\leq n$. If instead $x_e^k \not \in X_k$, we have the following two cases:
    \begin{enumerate}
    \item $x^k_e$ never had an axiom enumerated for it in (3a). Then $x^k_e \not \in \Delta^k_j (A_{f_j})$ for all $j\leq n$. 
    \item If $O^e_f(x^k_e) = i < \infty$, then $x_e^k \not \in \Delta^k_j(A_{f_j})$ for $j\leq n$ and $j\neq i$ as argued in step $(3\romannumeral 2)$. 
    \end{enumerate}
    Therefore we have $X_k \leq_e A_{f_j}$ for $j\neq i$ since cofinitely many of the $\{x^k_l\}_l$ have outcome $i$ and for all $j\leq n$ with $j\neq i$, if $O^e_f(x^k_l)= i$, then $\Delta^k_j(A_{f_j})(x^k_l) = X_k(x^k_l)$. \qedhere
    \end{enumerate}
    \end{proof}
    So $A$ is low, $(n-1)$-join reducible and $n$-join irreducible as required. \qedhere
\end{proof}
\begin{cor}\label{cor:npairs} For every $n>1$ there is an Ahmad $n$-pair $(A,B_1,\dots,B_n)$ such that $A$ is not the left half of an Ahmad $(n-1)$-pair.
\end{cor}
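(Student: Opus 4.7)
The plan is to chain together Theorem~\ref{thm:nIrrConst} with the characterization of left halves of Ahmad $n$-pairs given in Theorem~\ref{thm:npairchar}. Given $n > 1$, I would apply Theorem~\ref{thm:nIrrConst} to the exponent $n$ to obtain a set $A$ which is low (hence in particular $\lowww$), $n$-join irreducible, and $(n-1)$-join reducible.

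Since $A$ is $\lowww$ and $n$-join irreducible, the $(\Leftarrow)$ direction of Theorem~\ref{thm:npairchar} immediately supplies $\Sigma^0_2$ sets $B_1, \dots, B_n$ with $(A, B_1, \dots, B_n)$ an Ahmad $n$-pair. This handles existence.

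To show $A$ is not the left half of any Ahmad $(n-1)$-pair, I would argue by contradiction. Suppose $(A, C_1, \dots, C_{n-1})$ were such a pair. Then the $(\Rightarrow)$ direction of Theorem~\ref{thm:npairchar} (read with $n$ replaced by $n-1$) would force $A$ to be $(n-1)$-join irreducible; unwinding Definition~\ref{def:njoinirr}, this would mean that for every choice of $n$ sets strictly below $A$ some pair already joins strictly below $A$. But $(n-1)$-join reducibility of $A$, as recorded in Theorem~\ref{thm:nIrrConst} and exhibited in its proof by the sets $Y_0, \dots, Y_{n-1}$, provides exactly $n$ sets strictly below $A$ each pair of which joins back to $A$. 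This direct contradiction to $(n-1)$-join irreducibility completes the argument.

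There is no real obstacle here: once Theorem~\ref{thm:nIrrConst} and Theorem~\ref{thm:npairchar} are in hand the corollary is formal. The only thing to watch is the indexing convention in Definition~\ref{def:njoinirr} so that $(n-1)$-join reducible is read as the literal negation of $(n-1)$-join irreducible rather than, say, $n$-join reducible; once this is spelled out the contradiction is immediate.
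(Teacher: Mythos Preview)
Your proposal is correct and follows exactly the paper's own argument: invoke Theorem~\ref{thm:nIrrConst} to get a low, $n$-join irreducible, $(n-1)$-join reducible $A$, then apply both directions of Theorem~\ref{thm:npairchar}. One cosmetic slip: you have the two directions of Theorem~\ref{thm:npairchar} swapped---existence of the $B_i$'s uses $(\Rightarrow)$ and the contradiction uses $(\Leftarrow)$---but the mathematics is unaffected.
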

\begin{proof}
    Let $A$ be as in \ref{thm:nIrrConst}. Since $A$ is not $(n-1)$-join irreducible, it cannot be the left half of an Ahmad $(n-1)$-pair. However since $A$ is $n$-join irreducible and low, by Theorem \ref{thm:npairchar}, it is the left half of an Ahmad $n$-pair.
\end{proof}

\section{\texorpdfstring{Consequences for the $\forall \exists$-theory}{Consequences for the AE-theory}}

Using the results in this paper, we can now decide the truth or falsity of some new $\Pi_2$ sentences in the language of $\{\leq_e\}$ in the local structure. 

Deciding $\Pi_2$ sentences in the language $\{\leq_e\}$ is equivalent to deciding the following algebraic problem: Let $\mathcal P$ be a finite partial order and let $\mathcal Q_0, \dots, \mathcal Q_n$ be finite partial orders extending $\mathcal P$. Can any embedding of $\mathcal P$ into the  $\Sigma^0_2$ enumeration degrees be extended to an embedding of at least one of the $\mathcal Q_i$'s?

As mentioned in the introduction, the extension of embeddings problem is a subproblem of the $\forall \exists$-theory and corresponds to $n=0$ in the algebraic problem above. This was fully solved for the $\Sigma^0_2$ e-degrees by Lempp, Slaman and Sorbi \cite{extemb}.

An upcoming paper \cite{onePtExt} solves a different subproblem of the $\forall \exists$-theory which corresponds to limiting $\mathcal P$ to be an antichain, and each of the $\mathcal Q_i$'s being one point extensions below some elements of $\mathcal P$. 

Using the stronger results in this paper, we can now decide a larger class of sentences about the $\forall \exists$-theory. We state these new consequences below, limiting ourselves to the qualitatively new facts (presented in their simplest form, and not in full generality).
\begin{thm}\label{thm:AE}
The following $\Pi_2$ statements in the language of $\{\leq_e\}$ are true in the $\Sigma^0_2$ enumeration degrees.
\begin{enumerate}
\item $\forall A_0,B_0,A_1,B_1 \exists Z$ such that if ($B_0 \leq_e A_1$ and $A_0  \not \leq_e B_0$, $A_1 \not \leq_e B_1$), then ($Z<_e A_0$ and $Z \not \leq_e B_0$) or ($Z <_e A_1$ and $Z \not \leq_e B_1$).
\item $\forall \hat A, A, B_0,\dots, B_n, C_0,\dots,C_m \exists Z$ such that if ($\hat A \leq_e A,B_0$ and $\hat A \not \leq_e B_i$ for $i\neq 0$ and $A$, $B_0,\dots, B_n$, $C_0,\dots,C_m$ are all incomparable), then ($Z <_e A$, $Z \not \leq_e B_i$ for every $i$) or ($Z <_e B_0$, $Z \not \leq_e C_i$ for every $i$).
\item $\forall A,B,C,D_0,D_1 \exists Z$ such that if ($A$, $B$, and $C$ are incomparable, $D_0 <_e A,B$ and $D_1 <_e B,C$), then ($D_0 <_e Z <_e A$ and $Z \not \leq_e B$) or ($D_1 <_e Z <_e B$ and $Z \not \leq_e C$). 
\end{enumerate}
\end{thm}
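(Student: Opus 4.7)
The plan is to prove each of the three statements by contradiction, using the jump-class characterizations of Ahmad pair halves from Sections~3 and~4 to force a single set into both $\lowww$ and $\highh$. These classes are disjoint: if $X$ were both, then $0'''_e \leq_e X'' \leq_e X''' \leq_e 0'''_e$, so $X'' \equiv_e X'''$, contradicting the strict monotonicity of the enumeration jump.

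For (1), if no such $Z$ exists then $(A_0, B_0)$ and $(A_1, B_1)$ are both Ahmad pairs. Corollary~\ref{cor:pairiff} gives that $A_1$ is $\lowww$, while Theorem~\ref{thm:righthigh2} (applied with $n=1$) gives that $B_0$ is $\highh$. Since $\lowww$ is downward closed under $\leq_e$ and by hypothesis $B_0 \leq_e A_1$, the set $B_0$ is also $\lowww$, contradicting disjointness.

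For (2), if no such $Z$ exists then $(A, B_0, \dots, B_n)$ is an Ahmad $(n+1)$-pair (incomparability supplies $A \not \leq_e B_i$, and the absence of $Z$ says every proper predecessor of $A$ is $\leq_e$ some $B_i$). The conditions on $\hat A$ say exactly that $B_0$ is essential in the sense of Definition~\ref{def:properpair}, so Theorem~\ref{thm:righthigh2} places $B_0$ in $\highh$. Simultaneously $(B_0, C_0, \dots, C_m)$ is an Ahmad $(m+1)$-pair (again by incomparability and absence of $Z$), and Theorem~\ref{thm:npairchar} places $B_0$ in $\lowww$. For (3), the absence of $Z$ makes $(A, B)$ an Ahmad pair in the cone above $D_0$ and $(B, C)$ an Ahmad pair in the cone above $D_1$, and Lemma~\ref{lem:pairCone} puts $B$ in both $\highh$ and $\lowww$.

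These arguments are essentially short translations once the characterizations of Sections~3 and~4 are in hand, with no new combinatorics. The one place that merits care is (2): one should verify that $\hat A$ is genuinely $<_e A$ (since $\hat A \leq_e B_0$ while $A \not \leq_e B_0$ by incomparability), and that $\hat A \leq_e B_0$ together with $\hat A \not \leq_e B_j$ for $j \neq 0$ matches exactly the witness required by Definition~\ref{def:properpair} for $B_0$ to be essential. Everything else is immediate.
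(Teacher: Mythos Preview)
Your proof is correct and follows essentially the same approach as the paper: each item reduces to forcing a single degree into both $\lowww$ and $\highh$ via Corollary~\ref{cor:pairiff}, Theorem~\ref{thm:npairchar}, Theorem~\ref{thm:righthigh2}, and Lemma~\ref{lem:pairCone}. The only cosmetic difference is in part~(1), where the paper first disposes of the cases $A_0 >_e B_0$ or $A_1 >_e B_1$ by density and then transfers $\highh$ upward from $B_0$ to $A_1$, whereas you fold those cases into the Ahmad-pair conclusion and transfer $\lowww$ downward from $A_1$ to $B_0$; both are equivalent.
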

\begin{proof} We consider each sentence in turn.
\begin{enumerate}
\item If $A_0 >_e B_0$ or $A_1 >_e B_1$, such a $Z$ exists density of the local structure. On the other hand if $A_0 |_e B_0$ and $A_1 |_e B_1$, and there is no $Z <_e A_0$ such that $Z \not \leq_e B_0$, then $(A_0,B_0)$ is an Ahmad pair and $B_0$ is $\highh$. Since $A_1 \geq_e B_0$, it must be $\highh$ as well. But then $(A_1,B_1)$ is not an Ahmad pair by Corollary~\ref{cor:pairiff}, and so there is a $Z <_e A_1$ with $Z \not \leq_e B_1$. 
\item If $B_0$ is an essential right half of an Ahmad $(n+1)$-pair (witnessed by $\hat A$), then it is $\highh$ by Theorem~\ref{thm:righthigh2} and so cannot be the left half of any Ahmad $(m+1)$-pair by Theorem~\ref{thm:npairchar}.
\item If $(A,B)$ is an Ahmad pair in the cone above $D_0$, then by Lemma~\ref{lem:pairCone}, $A$ is $\lowww$ and $B$ is $\highh$. Therefore $(B,C)$ cannot be an Ahmad pair in the cone above $D_1$, since $B$ is not $\lowww$.\qedhere
\end{enumerate}
\end{proof}
While the fact that there are no maximal right halves is an easy observation, the lack of minimal right halves is a new result. However, this does not lead to any new consequences for the $\forall \exists$-theory, as it is $\Pi_3$ to state the existence of a minimal right half.

\section*{Acknowledgments}
I would like to thank Mariya Soskova for suggesting this problem, for providing numerous valuable references, especially access to her notes \cite{soskovaNotes} and for her help in proofreading parts of the manuscript. I am deeply grateful to my advisor, Joseph Miller, whose guidance, encouragement, and support were indispensable to the completion of this project. His meticulous proofreading of the entire paper led to several revisions, significantly improving the clarity and the quality of the work.

%
%

\bibliographystyle{plain}
\bibliography{ref} 

\end{document}